\renewcommand{\turnstile}[6][s]
    {\ifthenelse{\equal{#1}{d}}
        {\sbox{\first}{$\displaystyle{#4}$}
        \sbox{\second}{$\displaystyle{#5}$}}{}
    \ifthenelse{\equal{#1}{t}}
        {\sbox{\first}{$\textstyle{#4}$}
        \sbox{\second}{$\textstyle{#5}$}}{}
    \ifthenelse{\equal{#1}{s}}
        {\sbox{\first}{$\scriptstyle{#4}$}
        \sbox{\second}{$\scriptstyle{#5}$}}{}
    \ifthenelse{\equal{#1}{ss}}
        {\sbox{\first}{$\scriptscriptstyle{#4}$}
        \sbox{\second}{$\scriptscriptstyle{#5}$}}{}
    \setlength{\dashthickness}{0.111ex}
    \setlength{\ddashthickness}{0.35ex}
    \setlength{\leasturnstilewidth}{2em}
    \setlength{\extrawidth}{0.2em}
    \ifthenelse{%
      \equal{#3}{n}}{\setlength{\tinyverdistance}{0ex}}{}
    \ifthenelse{%
      \equal{#3}{s}}{\setlength{\tinyverdistance}{0.5\dashthickness}}{}
    \ifthenelse{%
      \equal{#3}{d}}{\setlength{\tinyverdistance}{0.5\ddashthickness}
        \addtolength{\tinyverdistance}{\dashthickness}}{}
    \ifthenelse{%
      \equal{#3}{t}}{\setlength{\tinyverdistance}{1.5\dashthickness}
        \addtolength{\tinyverdistance}{\ddashthickness}}{}
        \setlength{\verdistance}{0.4ex}
        \settoheight{\lengthvar}{\usebox{\first}}
        \setlength{\raisedown}{-\lengthvar}
        \addtolength{\raisedown}{-\tinyverdistance}
        \addtolength{\raisedown}{-\verdistance}
        \settodepth{\raiseup}{\usebox{\second}}
        \addtolength{\raiseup}{\tinyverdistance}
        \addtolength{\raiseup}{\verdistance}
        \setlength{\lift}{0.8ex}
        \settowidth{\firstwidth}{\usebox{\first}}
        \settowidth{\secondwidth}{\usebox{\second}}
        \ifthenelse{\lengthtest{\firstwidth = 0ex}
            \and
            \lengthtest{\secondwidth = 0ex}}
                {\setlength{\turnstilewidth}{\leasturnstilewidth}}
                {\setlength{\turnstilewidth}{2\extrawidth}
        \ifthenelse{\lengthtest{\firstwidth < \secondwidth}}
            {\addtolength{\turnstilewidth}{\secondwidth}}
            {\addtolength{\turnstilewidth}{\firstwidth}}}
        \ifthenelse{\lengthtest{\turnstilewidth < \leasturnstilewidth}}{\setlength{\turnstilewidth}{\leasturnstilewidth}}{}
    \setlength{\turnstileheight}{1.5ex}
    \sbox{\turnstilebox}
    {\raisebox{\lift}{\ensuremath{
        \makever{#2}{\dashthickness}{\turnstileheight}{\ddashthickness}
        \makehor{#3}{\dashthickness}{\turnstilewidth}{\ddashthickness}
        \hspace{-\turnstilewidth}
        \raisebox{\raisedown}
        {\makebox[\turnstilewidth]{\usebox{\first}}}
            \hspace{-\turnstilewidth}
            \raisebox{\raiseup}
            {\makebox[\turnstilewidth]{\usebox{\second}}}
        \makever{#6}{\dashthickness}{\turnstileheight}{\ddashthickness}}}}
        \mathrel{\usebox{\turnstilebox}}}
\newcommand{\axlabel}[1]{(#1) \phantomsection \label{ax:#1}}
\newcommand{\axref}[1]{(\hyperref[ax:#1]{#1})}
\newcommand{\newref}[4][]{
\ifthenelse{\equal{#1}{}}{\newtheorem{h#2}[hthm]{#4}}{\newtheorem{h#2}{#4}[#1]}
\expandafter\newcommand\csname r#2\endcsname[1]{#3~\ref{#2:##1}}
\expandafter\newcommand\csname R#2\endcsname[1]{#4~\ref{#2:##1}}
\expandafter\newcommand\csname n#2\endcsname[1]{\ref{#2:##1}}
\newenvironmentx{#2}[2][1=,2=]{
\ifthenelse{\equal{##2}{}}{\begin{h#2}}{\begin{h#2}[##2]}
\ifthenelse{\equal{##1}{}}{}{\label{#2:##1}}
}{\end{h#2}}
}
\theoremstyle{definition}
\theoremstyle{remark}
\newcommand{\cat}[1]{\mathbf{#1}}
\newcommand{\colim}{\mathrm{colim}}
\newcommand{\C}{\cat{C}}
\newcommand{\Mod}[1]{#1\text{-}\cat{Mod}}
\newcommand{\Th}{\cat{Th}}
\newcommand{\PSt}{\cat{PSt}}
\newcommand{\algtt}{\cat{TT}}
\newcommand{\emptyCtx}{\mathbf{1}}
\newcommand{\nf}{\mathrm{nf}}
\newcommand{\deq}{\equiv}
\newcommand{\repl}{:=}
\newcommand{\type}{}
\newcommand{\Syn}{\mathrm{Syn}}
\newcommand{\Lang}{\mathrm{Lang}}
\newcommand{\Term}{\mathrm{Term}}
\newcommand{\FV}{\mathrm{FV}}
\newcommand{\IdT}{\mathrm{Id}}
\newcommand{\Jeq}{\mathit{Jeq}}
\newcommand{\wUA}{\mathrm{wUA}}
\newcommand{\coeT}{\mathrm{coe}}
\newcommand{\PathT}{\mathrm{Path}}
\newcommand{\transportT}{\mathrm{transport}}
\newcommand{\at}{\mathit{at}}
\newcommand{\unit}{\mathit{unit}}
\newcommand{\Ceq}{\mathit{eq}}
\newcommand{\Id}{\mathit{Id}}
\newcommand{\leftI}{\mathit{left}}
\newcommand{\rightI}{\mathit{right}}
\newcommand{\coe}{\mathit{coe}}
\newcommand{\app}{\mathit{app}}
\newcommand{\refl}{\mathit{refl}}
\newcommand{\subst}{\mathit{subst}}
\newcommand{\transport}{\mathit{transport}}
\newcommand{\ft}{\mathit{ft}}
\newcommand{\ty}{\mathit{ty}}
\newcommand{\ctx}{\mathit{ctx}}
\newcommand{\tm}{\mathit{tm}}
\newcommand{\we}{\mathcal{W}}
\newcommand{\I}{\mathrm{I}}
\newcommand{\J}{\mathrm{J}}
\newcommand{\class}[2]{#1\text{-}\mathrm{#2}}
\newcommand{\Iinj}[1][\I]{\class{#1}{inj}}
\newcommand{\Icell}[1][\I]{\class{#1}{cell}}
\newcommand{\Icof}[1][\I]{\class{#1}{cof}}
\newcommand{\Jinj}[1][]{\Iinj[\J#1]}
\newcommand{\Jcell}[1][]{\Icell[\J#1]}
\newcommand{\cyli}{i}
\numberwithin{figure}{section}
\newcommand{\po}[1][dr]{\save*!/#1+1.2pc/#1:(1,-1)@^{|-}\restore}
\begin{document}

\title{Morita equivalences between algebraic dependent type theories}

\author{Valery Isaev}

\begin{abstract}
We define a notion of equivalence between algebraic dependent type theories which we call Morita equivalence.
This notion has a simple syntactic description and an equivalent description in terms of models of the theories.
The category of models of a type theory often carries a natural structure of a model category.
If this holds for the categories of models of two theories, then a map between them is a Morita equivalence if and only if the adjunction generated by it is a Quillen equivalence.
\end{abstract}

\maketitle

\section{Introduction}

Homotopy type theory can be seen as an internal language of $\infty$-categories.
One way to formalize this point of view is to define a (semi-)model structure on the category of models of a type theory and
prove that it is Quillen equivalent to a model category presenting the $\infty$-category of $\infty$-categories with some additional structure depending on the theory.
This implies that every such $\infty$-category can be presented in the form of a model of this type theory and the theory is naturally ``the internal language'' of its models.
Such (semi-)model structures were constructed in \cite{alg-models} and \cite{kap-lum-model}.
A partial progress on the latter point was made in \cite{kapulkin-szumilo-fin-comp},
where an equivalence between the $\infty$-category of finitely complete $\infty$-categories and the $\infty$-category of models of the type theory with identity types, $\Sigma$-types, and unit types was constructed.

A type theory often can be formulated in several different ways so that the categories of models of these theories are not equivalent
For example, we give several ways to formulate the theory of $\Pi$-types in subsection~\ref{sec:simple}.
A natural question is whether the categories of models of these theories are equivalent in an appropriate sense.
If we can answer this question positively, then it does not matter which theory we use to formulate conjectures about the category of its models such as the one mentioned above.

There is another reason why we might be interested in this question.
There are several theories which should be equivalent in some sense:
\begin{itemize}
\item The theory of a unit type and the theory of a contractible type should be equivalent since the only difference between them is that the former postulate the contractibility of a type judgmentally.
\item It seems that the previous example generalizes to many theories such as the theory of identity types or various theories of inductive types.
We can replace judgmental equality rules with their propositional analogues.
For example, the rule $\Gamma \vdash J(A,a,D,d,a,\refl(a)) \deq d[a]$ is replaced with a new construction $\Jeq(A,D,d,a) : \Id(J(A,a,D,d,a,\refl(a)),d[a])$.
These two theories should be equivalent and, since the theory with the propositional rule is cofibrant, it is a cofibrant replacement of the theory of identity types.
\item If a theory has a judgmental equality between types, then we can replace it with an equivalence between these types.
It is useful to know that these theories are equivalent since there are many examples of models of the theory with the equivalence which are not known to be models of the theory with the judgmental rule.
\item There are two ways in which the theory of $\Sigma$-types can be defined: one of them uses projections and the $\eta$-rule and the other uses usual eliminator rule.
\item The theories of dependent and non-dependent function types should be equivalent (assuming $\Sigma$-types). This is similar to the statement that a category is locally Cartesian closed if and only if it has the $\Pi$-functor.
\item The theory of the interval type defined in \cite{alg-models} should be equivalent to the theory with identity types and the unit type.
\end{itemize}

For every pair of theories listed above, one of the theories can be interpreted in the other, but not the other way around.
This means that these equivalences should be some sort of weak equivalences in a category of type theories.
One definition of such a category was proposed in \cite{alg-tt}.
In this paper, we define several notions of weak equivalences between theories including syntactic equivalence and Morita equivalence.

There is a natural notion of weak equivalences between models of type theories.
It was shown in \cite{alg-models} that if a theory has the interval type, then there is a model structure on the category of models of this theory.
We will prove that there is also a model structure on the category of theories with the interval type with Morita equivalences as weak equivalences.

Morita equivalence between theories $T_1$ and $T_2$ is defined as a map $f : T_1 \to T_2$ such that the unit $\eta_X : X \to f^*(f_!(X))$ of the adjunction $f_! \dashv f^*$ generated by this map is a weak equivalence for every cofibrant object $X$.
Note that the notions of weak equivalences between models and cofibrant models make sense even the model structure does not exist.
If it does exist, then a map is a Morita equivalence if and only if the adjunction is a Quillen equivalence.
This gives us a tool that allows us to compare models of different type theories.
Syntactic equivalences are weaker than Morita equivalences.
A map is a syntactic equivalence if the initial models of theories are weakly equivalent.
More precisely, a map $f : T_1 \to T_2$ is a syntactic equivalence if and only if the unique map $0 \to f^*(0)$ is a weak equivalence.

There is also a characterization of Morita equivalences in syntactic terms.
It seems that this characterization is the most useful one if we want to check that a specific map is a Morita equivalence.
To work with this characterization, it is useful to assume that the theories are confluent.
Roughly speaking, this means that we can choose a direction of axioms so that the relation corresponding to the axioms with the chosen direction is confluent.
We will give a formal definition of confluent theories in the setting of algebraic type theories.

Unfortunately, we still do not know whether all of the examples listed above are indeed Morita equivalences.
Nevertheless, we prove that this is true for the first example and give several other simple examples.

The paper is organized as follows.
In section~\ref{sec:morita}, we give several definitions of syntactic equivalences and Morita equivalences and prove that they are equivalent.
In section~\ref{sec:model-structure}, we construct a model structure on the category of theories with the interval type.
In section~\ref{sec:triv-fib}, we give a characterization of trivial fibrations between theories.
In section~\ref{sec:confluent}, we define confluent theories and prove their properties.
In section~\ref{sec:examples}, we give several examples of Morita equivalences.
In section~\ref{sec:conclusion}, we summarize the results of this paper and discuss issues that prevent us from constructing more examples of Morita equivalences.

\section{Morita equivalences of theories}
\label{sec:morita}

In this section we define several notions of weak equivalence of algebraic dependent type theories.

\subsection{Algebraic dependent type theories}

In this paper, we will need a precise definition of a type theory.
Moreover, we will need a category of such theories.
We will work with definitions given in \cite{alg-tt}.
They are based on the notion of partial Horn theories defined in \cite{PHL}.
Similar ideas were developed by Lumsdaine, Bauer, and Haselwarter \cite{lum-tt}.
In this section, we briefly recall necessary definitions and notations.

A many sorted first-order signature $(\mathcal{S},\mathcal{F},\mathcal{P})$ consists of a set $\mathcal{S}$ of sorts,
a set $\mathcal{F}$ of function symbols and a set $\mathcal{P}$ of predicate symbols.
Each function symbol $\sigma$ is equipped with a signature of the form $\sigma : s_1 \times \ldots \times s_k \to s$, where $s_1$, \ldots $s_k$, $s$ are sorts.
Each predicate symbol $R$ is equipped with a signature of the form $R : s_1 \times \ldots \times s_k$.
If $V$ is an $\mathcal{S}$-set, then the $\mathcal{S}$-set of terms of $T$ with free variables in $V$ will be denoted by $\Term_T(V)$.

An atomic formula is an expression either of the form $t_1 = t_2$ or of the form $R(t_1, \ldots t_n)$,
where $R$ is a predicate symbol and $t_1$, \ldots $t_n$ are terms.
We abbreviate $t = t$ to $t\!\downarrow$.
A Horn formula is an expression of the form $\varphi_1 \land \ldots \land \varphi_n$, where $\varphi_1$, \ldots $\varphi_n$ are atomic formulas.
The conjunction of the empty set of atomic formulas is denoted by $\top$.
A sequent is an expression of the form $\varphi \sststile{}{x_1, \ldots x_n} \psi$, where $x_1$, \ldots $x_n$ are variables
and $\varphi$ and $\psi$ are Horn formulas such that $\FV(\varphi) \cup \FV(\psi) \subseteq \{ x_1, \ldots x_n \}$.
A \emph{partial Horn theory} consists of a signature and a set of Horn sequents in this signature.

An $\mathcal{S}$-set $M$ is a collection of sets $\{ M_s \}_{s \in \mathcal{S}}$.
An interpretation $M$ of a signature $(\mathcal{S},\mathcal{F},\mathcal{P})$ is an $\mathcal{S}$-set $M$
together with a collection of \emph{partial} functions $M(\sigma) : M_{s_1} \times \ldots \times M_{s_k} \to M_s$
for every function symbol $\sigma : s_1 \times \ldots \times s_k \to s$ of $T$
and relations $M(R) \subseteq M_{s_1} \times \ldots \times M_{s_k}$ for every predicate symbol $R : s_1 \times \ldots \times s_k$.
A model of a partial Horn theory $T$ is an interpretation of the underlying signature such that the axioms of $T$ hold in this interpretation.
The category of models of $T$ will be denoted by $\Mod{T}$.

The rules of \emph{partial Horn logic} are listed below.
A \emph{theorem} of a partial Horn theory $T$ is a sequent derivable from $T$ in this logic.
We will write $\varphi \sststile{T}{V} \psi$ to denote the fact that sequent $\varphi \sststile{}{V} \psi$ is derivable in $T$.
\begin{center}
$\varphi \sststile{}{V} \varphi$ \axlabel{b1}
\qquad
\AxiomC{$\varphi \sststile{}{V} \psi$}
\AxiomC{$\psi \sststile{}{V} \chi$}
\RightLabel{\axlabel{b2}}
\BinaryInfC{$\varphi \sststile{}{V} \chi$}
\DisplayProof
\qquad
$\varphi \sststile{}{V} \top$ \axlabel{b3}
\end{center}

\medskip
\begin{center}
$\varphi \land \psi \sststile{}{V} \varphi$ \axlabel{b4}
\qquad
$\varphi \land \psi \sststile{}{V} \psi$ \axlabel{b5}
\qquad
\AxiomC{$\varphi \sststile{}{V} \psi$}
\AxiomC{$\varphi \sststile{}{V} \chi$}
\RightLabel{\axlabel{b6}}
\BinaryInfC{$\varphi \sststile{}{V} \psi \land \chi$}
\DisplayProof
\end{center}

\medskip
\begin{center}
$\sststile{}{x} x\!\downarrow$ \axlabel{a1}
\qquad
$x = y \land \varphi \sststile{}{V,x,y} \varphi[y/x]$ \axlabel{a2}
\end{center}

\medskip
\begin{center}
\AxiomC{$\varphi \sststile{}{V} \psi$}
\RightLabel{, $x \in \FV(\varphi)$ \axlabel{a3}}
\UnaryInfC{$\varphi[t/x] \sststile{}{V,V'} \psi[t/x]$}
\DisplayProof
\end{center}
\medskip

We will give several proofs by induction on the derivation of a sequent.
We need to work with sequents in which the left hand side has some property, but in a derivation of a sequent in this logic the left hand side may vary arbitrary.
Thus we describe another set of rules which is equivalent to this one and in which the left hand side stays the same.
We call these rules the \emph{natural deduction system}.
In this system the right hand side of all sequents is an atomic formula.

\begin{center}
\AxiomC{}
\RightLabel{\axlabel{nv}}
\UnaryInfC{$\varphi \sststile{}{V} x\!\downarrow$}
\DisplayProof
\qquad
\AxiomC{$\varphi \sststile{}{V} a = b$}
\RightLabel{\axlabel{ns}}
\UnaryInfC{$\varphi \sststile{}{V} b = a$}
\DisplayProof
\end{center}

\begin{center}
\AxiomC{}
\RightLabel{\axlabel{nh}}
\UnaryInfC{$\varphi_1 \land \ldots \land \varphi_n \sststile{}{V} \varphi_i$}
\DisplayProof
\qquad
\AxiomC{$\varphi \sststile{}{V} a = b$}
\AxiomC{$\varphi \sststile{}{V} \psi[a/x]$}
\RightLabel{\axlabel{nl}}
\BinaryInfC{$\varphi \sststile{}{V} \psi[b/x]$}
\DisplayProof
\end{center}

\begin{center}
\AxiomC{$\varphi \sststile{}{V} R(t_1, \ldots t_n)$}
\RightLabel{\axlabel{np}}
\UnaryInfC{$\varphi \sststile{}{V} t_i\!\downarrow$}
\DisplayProof
\qquad
\AxiomC{$\varphi \sststile{}{V} \sigma(t_1, \ldots t_n)\!\downarrow$}
\RightLabel{\axlabel{nf}}
\UnaryInfC{$\varphi \sststile{}{V} t_i\!\downarrow$}
\DisplayProof
\end{center}
where $R$ is a predicate symbol of the theory and $\sigma$ is its function symbol.

Finally, for every axiom $\psi_1 \land \ldots \land \psi_n \sststile{}{x_1 : s_1, \ldots x_k : s_k} \chi_1 \land \ldots \land \chi_m$
and for all terms $t_1 : s_1$, \ldots $t_k : s_k$, we have the following rules for all $1 \leq j \leq m$:
\smallskip
\begin{center}
\AxiomC{$\varphi \sststile{}{V} t_i\!\downarrow$, $1 \leq i \leq k$}
\AxiomC{$\varphi \sststile{}{V} \psi_i[t_1/x_1, \ldots t_k/x_k]$, $1 \leq i \leq n$}
\RightLabel{\axlabel{na}}
\BinaryInfC{$\varphi \sststile{}{V} \chi_j[t_1/x_1, \ldots t_k/x_k]$}
\DisplayProof
\end{center}

\begin{prop}
A sequent $\varphi \sststile{}{V} \psi_1 \land \ldots \land \psi_n$ is derivable in the system of rules \axref{b1}-\axref{b6}, \axref{a1}-\axref{a3} if and only if
sequents $\varphi \sststile{}{V} \psi_1$, \ldots $\varphi \sststile{}{V} \psi_n$ are derivable in the natural deduction system.
\end{prop}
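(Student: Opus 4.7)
The plan is to prove the two implications separately; the reverse direction ultimately rests on a cut admissibility lemma for the natural deduction system, which is the main obstacle.

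For the ``if'' direction, assume each $\varphi \sststile{}{V} \psi_i$ has an ND-derivation. I would show by induction on this derivation that each such sequent is already derivable in \axref{b1}--\axref{b6}, \axref{a1}--\axref{a3}. Most ND rules translate to elementary manipulations: \axref{nh} is obtained from iterated \axref{b4}--\axref{b5} combined with \axref{b1}; \axref{nl} follows directly from \axref{a2}; \axref{ns} is the standard symmetry-of-equality argument from \axref{a1}--\axref{a2}; and \axref{np}, \axref{nf} express basic sanity about subterm definedness. The theory-axiom rule \axref{na} is immediate: \axref{a3} delivers the substituted instance of the relevant axiom of $T$, \axref{b4}--\axref{b5} extract the $i$th hypothesis and select the $j$th conjunct of its conclusion, and these are chained by \axref{b2}. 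Once every $\varphi \sststile{}{V} \psi_i$ is derived, a single application of \axref{b6} assembles $\varphi \sststile{}{V} \psi_1 \land \ldots \land \psi_n$.

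For the ``only if'' direction, I would induct on the derivation in \axref{b1}--\axref{b6}, \axref{a1}--\axref{a3}, proving the stronger statement that whenever $\varphi \sststile{}{V} \chi_1 \land \ldots \land \chi_m$ is derivable, each atomic conjunct $\varphi \sststile{}{V} \chi_j$ is ND-derivable. The cases \axref{b1}, \axref{b3}--\axref{b6}, \axref{a1}--\axref{a2} are immediate from \axref{nh} and \axref{nl}. The case \axref{a3} requires stability of ND-derivability under substitution of terms for variables, which is a routine secondary induction on ND derivations (every ND rule is preserved by such substitutions). The delicate case is \axref{b2}: given ND-derivations of $\varphi \sststile{}{V} \psi_i$ for every atomic conjunct of the intermediate $\psi$ and of $\psi \sststile{}{V} \chi_j$ for every conjunct of $\chi$, one must synthesize ND-derivations of $\varphi \sststile{}{V} \chi_j$.

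The main obstacle is therefore the following cut admissibility lemma for the ND system: if $\varphi \sststile{}{V} \psi_i$ is ND-derivable for each atomic conjunct $\psi_i$ of $\psi$ and $\psi \sststile{}{V} \chi$ is ND-derivable with $\chi$ atomic, then $\varphi \sststile{}{V} \chi$ is ND-derivable. I would prove it by induction on the ND derivation of $\psi \sststile{}{V} \chi$. Every ND rule other than \axref{nh} is parametric in the left-hand side, so the inductive hypotheses plug directly into the same rule after replacing $\psi$ by $\varphi$. The only rule that inspects the left-hand side is \axref{nh}, which concludes $\psi \sststile{}{V} \psi_i$ for some conjunct $\psi_i$; there we discard the \axref{nh} step and insert the hypothesized ND-derivation of $\varphi \sststile{}{V} \psi_i$. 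With this cut admissibility in hand, the \axref{b2} case of the main induction closes and the proof is complete.
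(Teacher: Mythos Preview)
Your approach matches the paper's almost exactly, including the cut-admissibility argument for \axref{b2}: replace the left-hand side throughout and swap each use of \axref{nh} for the assumed derivation of $\varphi \sststile{}{V} \psi_i$.

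There is one genuine oversight in your treatment of \axref{a3}. You claim that ``every ND rule is preserved by such substitutions,'' but \axref{nv} is not: after substituting $t$ for $x$, the instance of \axref{nv} at that variable becomes the sequent $\varphi[t/x] \sststile{}{V,V'} t\!\downarrow$, and $t$ need not be a variable, so \axref{nv} no longer applies. The paper repairs this using the side condition $x \in FV(\varphi)$ on \axref{a3}: since $x$ occurs in $\varphi$, the term $t$ occurs as a subterm of some atomic conjunct of $\varphi[t/x]$, and one can then extract $t\!\downarrow$ from \axref{nh} followed by \axref{np}, \axref{nf}, and the derived rules \axref{ne1}, \axref{ne2} (themselves obtained from \axref{nl} and \axref{ns}). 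Without this argument your substitution lemma is false as stated, so the \axref{a3} case does not close.
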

\begin{proof}
It is easy to prove the ``if'' part.
Conversely, the rules \axref{b1}, \axref{b4}, and \axref{b5} follow from \axref{nh},
the rules \axref{b3} and \axref{b6} hold trivially,
the rule \axref{a1} follows from \axref{nv},
the rule \axref{a2} follows from \axref{nl} and \axref{nh},
and every axiom is derivable from \axref{na}.

To prove the rule \axref{b2}, we just need to show that if sequents $\varphi \sststile{}{V} \psi_1$, \ldots $\varphi \sststile{}{V} \psi_n$,
and $\psi_1 \land \ldots \land \psi_n \sststile{}{V} \chi$ are derivable in the natural deduction, then $\varphi \sststile{}{V} \chi$ is also derivable.
We can construct a derivation tree for this sequent as a derivation tree for $\psi_1 \land \ldots \land \psi_n \sststile{}{V} \chi$
in which the left hand sides of all sequents are replaced with $\varphi$ and rules \axref{nh} are replaced with derivation trees for $\varphi \sststile{}{V} \psi_i$.

To prove the rule \axref{a3}, consider a derivation tree for a sequent $\varphi \sststile{}{V} \psi$.
To construct a derivation tree for $\varphi[t/x] \sststile{}{V,V'} \psi[t/x]$, we just need to apply the substitution to every sequent in this derivation tree.
The only rule that is not closed under substitution is \axref{nv}.
By assumption, $x \in \FV(\varphi)$.
In this case the sequent $\varphi[t/x] \sststile{}{V,V'} t\!\downarrow$ is derivable from \axref{np}, \axref{nf} and the following rules:
\begin{center}
\AxiomC{$\varphi \sststile{}{V} t_1 = t_2$}
\RightLabel{\axlabel{ne1}}
\UnaryInfC{$\varphi \sststile{}{V} t_1\!\downarrow$}
\DisplayProof
\qquad
\AxiomC{$\varphi \sststile{}{V} t_1 = t_2$}
\RightLabel{\axlabel{ne2}}
\UnaryInfC{$\varphi \sststile{}{V} t_2\!\downarrow$}
\DisplayProof
\end{center}
The rule \axref{ne2} follows from \axref{nl} if we take $\psi(x) = (x = b)$.
The rule \axref{ne1} follows from \axref{ne2} and \axref{ns}.
\end{proof}

We will need the following lemma later:

\begin{lem}[mcf]
A sequent $\varphi \sststile{}{x_1, \ldots x_n} \psi$ is provable in a theory $T$ if and only if
the sequent $\sststile{}{} \psi[c_1/x_1, \ldots c_n/x_n]$ is provable in the theory $T \cup \{ \sststile{}{} c_i\!\downarrow\ |\ 1 \leq i \leq n \} \cup \{ \varphi[c_1/x_1, \ldots c_n/x_n] \}$,
where $c_1$, \ldots $c_n$ are fresh constants.
\end{lem}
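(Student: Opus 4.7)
The plan is to prove both directions by a syntactic transformation of derivation trees, substituting the fresh constants $c_i$ for the variables $x_i$ in the forward direction and (textually) substituting variables for constants in the backward direction. I will work throughout in the natural deduction system of the preceding proposition, which is convenient because its axiom schemes \axref{nv} and \axref{nh} already absorb weakening of the context and the left-hand side.

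For the forward direction, I take a natural deduction derivation of $\varphi \sststile{T}{x_1, \ldots x_n} \psi$ and apply the substitution $[c_1/x_1, \ldots c_n/x_n]$ to every sequent in the tree. As observed in the proof of the previous proposition, every natural deduction rule except \axref{nv} is closed under substitution of terms for variables. The instances of \axref{nv} that break are exactly those concluding $x_i\!\downarrow$, and these are patched using the newly added axioms $\sststile{}{} c_i\!\downarrow$ of the extended theory. The patched tree derives $\varphi[c_1/x_1, \ldots c_n/x_n] \sststile{}{} \psi[c_1/x_1, \ldots c_n/x_n]$ in $T'$, and combining it with the new axiom $\sststile{}{} \varphi[c_1/x_1, \ldots c_n/x_n]$ via \axref{b2} yields $\sststile{}{} \psi[c_1/x_1, \ldots c_n/x_n]$.

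For the backward direction, I start from a natural deduction derivation of $\sststile{}{} \psi[c_1/x_1, \ldots c_n/x_n]$ in $T'$ and, after $\alpha$-renaming any bound variable inside the tree that happens to be named $x_i$, apply the inverse textual transformation: replace every occurrence of the constant $c_i$ by the variable $x_i$ in every term, adjoin $x_1, \ldots x_n$ to the context of every sequent, and prepend $\varphi$ as an extra left-hand conjunct. The original $T$-axioms are untouched; each use of the axiom $\sststile{}{} c_i\!\downarrow$ becomes an instance of \axref{nv}; each use of the axiom $\sststile{}{} \varphi[c_1/x_1, \ldots c_n/x_n]$ becomes an instance of \axref{nh}; and every other inference rule still applies, since the natural deduction rules tolerate extra context variables and an extra left-hand conjunct. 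The root of the transformed tree is exactly $\varphi \sststile{T}{x_1, \ldots x_n} \psi$, as required.

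I expect the main obstacle to be the rule-by-rule verification in the backward direction, in particular the substitutions hidden inside \axref{na}: the $\alpha$-renaming step must put the variables occurring in the original derivation genuinely disjoint from $\{x_1, \ldots, x_n\}$, so that swapping each $c_i$ for $x_i$ does not collide with any variable being substituted inside the derivation.
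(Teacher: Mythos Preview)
Your plan is correct. The two directions are exactly the kind of ``replace variables by fresh constants / replace constants back by variables'' translation one expects, and the natural deduction system is the right vehicle because the left-hand side and the context stay fixed along a derivation. Your caveat about $\alpha$-renaming is the right thing to flag: in the backward direction the placeholder variable in \axref{nl} (and, by the same token, the bound axiom-variables in an instance of \axref{na}) must be taken disjoint from $\{x_1,\ldots,x_n\}$, or else the textual replacement $c_i \mapsto x_i$ will conflate two distinct roles of $x_i$. Once that renaming is done, the rule-by-rule verification is routine: in a derivation of $\sststile{}{}\psi[c/x]$ the context and left-hand side are already empty, so \axref{nv} and \axref{nh} do not occur in the original tree, every use of the axiom $\sststile{}{} c_i\!\downarrow$ becomes an instance of \axref{nv}, and every use of $\sststile{}{} \varphi[c/x]$ becomes an instance of \axref{nh}, exactly as you say.

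The paper takes a different route: it does not reprove this from scratch but simply cites two general results from the partial Horn logic literature (\cite[Theorems~10 and 11]{PHL}), which are the Theorem on Constants and the Deduction Theorem for PHL. Those results, applied in sequence, give the lemma immediately. Your approach is more self-contained (it stays entirely within the natural deduction calculus set up just above) at the cost of the rule-by-rule bookkeeping you anticipate; the paper's approach is a one-line reduction to known metatheorems but requires the reader to chase an external reference.
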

\begin{proof}
This follows from \cite[Theorem~10, Theorem~11]{PHL}.
\end{proof}

Partial Horn theories with a given set of sorts $\mathcal{S}$ form a category which we will denote by $\Th_\mathcal{S}$ (see \cite[Section~2]{alg-tt} for a definition).
The category of models $\Mod{T}$ embeds fully faithfully into the category $T/\Th_\mathcal{S}$.
We will denote the embedding functor by $\Lang$.
This functor has a right adjoint $\Syn : T/\Th_\mathcal{S} \to \Mod{T}$.
The model $\Syn(T')$ is defined as the syntactic model of the theory $T'$.
The precise definition of these functors is given in \cite[Section~4.1]{alg-models}.

The \emph{theory of substitutions} is the theory with $\mathcal{S} = \{ \ctx, \tm \} \times \mathbb{N}$ as the set of sorts, function symbols given below, and axioms listed in \cite[Section~3.1]{alg-tt}.
\begin{align*}
\emptyCtx      & : (\ctx,0) \\
\ft_n          & : (\ty,n) \to (\ctx,n) \\
\ty_n          & : (\tm,n) \to (\ty,n) \\
v_{n,i}        & : (\ctx,n) \to (\tm,n) \text{, } 0 \leq i < n \\
\subst_{p,n,k} & : (\ctx,n) \times (p,k) \times (\tm,n)^k \to (p,n) \text{, } p \in \{ \tm, \ty \}
\end{align*}

We often omit index $n$ in these function symbols.
So, we will write $\ft$, $\ty$, and $v_i$ instead of $\ft_n$, $\ty_n$, and $v_{n,i}$, respectively.
Function symbols $v_{n,i}$ represent de Bruijn indices.
We will sometimes use named representation of terms.
So, $A_1, \ldots A_n \vdash B(v_{n-1, \ldots v_0})$ and $x_1 : A_1, \ldots x_n : A_n \vdash B(x_1, \ldots x_n)$ represent the same judgment.
The judgment itself simply denotes the formula $\ft(B(v_{n-1, \ldots v_0})) = A_n \land \ft(A_n) = A_{n-1} \land \ldots \land \ft(A_1) = A_0$.
Similarly, judgement $\Gamma \vdash b : B$ represents formula $\ty(b) = B \land (\Gamma \vdash B)$.

Note that $\subst_{p,n,k}$ denotes the usual type-theoretic substitution while $t[s/x]$ denotes substitution on the meta level of algebraic theories.

We let $\ft^i_n : (\ctx,n+i) \to (\ctx,n)$ and $\ctx_{p,n} : (p,n) \to (\ctx,n)$ be the following derived operations:
\begin{align*}
\ft^0_n(A)      & = A \\
\ft^{i+1}_n(A)  & = \ft^i_n(\ft_{n+i}(A)) \\
\ctx_{\ty,n}(t) & = \ft_n(t) \\
\ctx_{\tm,n}(t) & = \ft_n(\ty_n(t)) \\
\ctx^i_{p,n}(t) & = \ft^i_n(\ctx_{p,n+i}(t))
\end{align*}
We also write $(\ty,n)$ for $(\ctx,n+1)$.

Let $\mathcal{F}_0$ be a set of function symbols and let $\mathcal{P}_0$ be a set of predicate symbols.
We call elements of these sets basic function symbols and basic predicate symbols, respectively.
Then we define the full sets of function and predicate symbols:
\begin{align*}
\mathcal{F} = \{ & \sigma_m : (\ctx,m) \times (p_1,m+n_1) \times \ldots \times (p_k,m+n_k) \to (p,m+n) \mid \\
                 & m \in \mathbb{N}, \sigma \in \mathcal{F}_0, \sigma : (p_1,n_1) \times \ldots \times (p_k,n_k) \to (p,n) \} \\
\mathcal{P} = \{ & R_m : (\ctx,m) \times (p_1,m+n_1) \times \ldots \times (p_k,m+n_k) \mid \\
                 & m \in \mathbb{N}, R \in \mathcal{P}_0, R : (p_1,n_1) \times \ldots \times (p_k,n_k) \}
\end{align*}

An \emph{algebraic dependent type theory} is a theory of the form $(\mathcal{S}, \mathcal{F}_s \cup \mathcal{F}, \mathcal{P}, \mathcal{A}_s \cup \mathcal{A})$, where $\mathcal{S}$, $\mathcal{F}$, and $\mathcal{P}$ are defined above,
$\mathcal{F}_s$ is the set of function symbols of the theory of substitutions, $\mathcal{A}_s$ is the set of its axioms, and $\mathcal{A}$ is an arbitrary set of axioms such that the following sequents are derivable for every $\sigma_m \in \mathcal{F}$ and $R_m \in \mathcal{P}$:
\begin{align*}
\sigma_m(\Gamma, x_1, \ldots x_k)\!\downarrow\ & \sststile{}{\Gamma, x_1, \ldots x_k} \ctx^n_{p,m}(\sigma_m(\Gamma, x_1, \ldots x_k)) = \Gamma \\
\sigma_m(\Gamma, x_1, \ldots x_k)\!\downarrow\ & \sststile{}{\Gamma, x_1, \ldots x_k} \bigwedge_{1 \leq i \leq k} \ctx^{n_i}_{p_i,m}(x_i) = \Gamma \\
R_m(\Gamma, x_1, \ldots x_k) & \sststile{}{\Gamma, x_1, \ldots x_k} \bigwedge_{1 \leq i \leq k} \ctx^{n_i}_{p_i,m}(x_i) = \Gamma
\end{align*}
Moreover, such a theory must satisfy the condition given in \cite[Definition~4.5]{alg-tt}, which just says that $\subst$ commutes with all function symbols.

We will also use the theory defined in \cite{alg-models}, which we denote by $\coeT_1 + \sigma + \PathT + \wUA$.
This theory has an interval type $I$ with two constructors $left$ and $right$ and an eliminator $\coe$, which is just the eliminator for the unit type.
It also has the following axiom, which we denote by $\sigma$:
\[ \coe(x.A, a, i) = a \text{, if } x \notin \FV(A) \]
It also has the type of paths, which we will denote by $\Id(a,a')$ and a weak univalence axiom.

\subsection{Model categories of models of type theories}

To define Morita equivalences between two theories $T_1$ and $T_2$, they must have some additional structure.
We assume that all of the theories are equipped with a morphism from the theory that has one function symbol
$\Id : (\tm,0) \times (\tm,0) \to (\ty,0)$ and the only axiom $\Id(x,y)\!\downarrow\ \sststile{}{x,y} \ty(x) = \ty(y)$.
We will denote this theory by $\IdT_0$.
We often need to assume even more structure, but we will always state additional assumptions explicitly.

Let $T$ be a theory under $\IdT_0$ and let $X$ be a model of $T$.
A \emph{relative homotopy} between terms $a,a' \in X_{(\tm,n)}$ is a term $h \in X_{(\tm,n)}$ such that $\ty(h) = \Id(a,a')$.
A \emph{relative homotopy} between types $A,A' \in X_{(\ty,n)}$ is a tuple $(f,g,p,g',p')$, where $f,g,p,g',p' \in X_{(\tm,n+1)}$ such that
\begin{align*}
x : A & \vdash f : A' \\
y : A' & \vdash g : A \\
x : A & \vdash p : \Id(g[y \mapsto f], x) \\
y : A' & \vdash g' : A \\
y : A' & \vdash p' : \Id(f[x \mapsto g], y)
\end{align*}

In general the homotopy relation is not an equivalence relation, but it is if $T$ also has the reflexivity and transport operations:
\begin{center}
\AxiomC{}
\UnaryInfC{$\vdash \refl(x) : \Id(x,x)$}
\DisplayProof
\qquad
\AxiomC{$\vdash p : \Id(a,a')$}
\AxiomC{$\vdash b : B[a]$}
\BinaryInfC{$\vdash \transport(B,a,a',p,b) : B[a']$}
\DisplayProof
\end{center}

Let $X$ and $Y$ be models of a theory with identity types.
A morphism of models $f : X \to Y$ is \emph{weak equivalence} if it satisfies the following conditions:
\begin{enumerate}
\item For all $A \in X_{(\ty,n)}$ and $a \in Y_{(\tm,n)}$ such that $\ty(a) = f(A)$,
there is a term $a' \in X_{(\tm,n)}$ such that $\ty(a') = A$ and $f(a')$ is relatively homotopic to $a$.
In this case we will say that $f$ is \emph{essentially surjective on terms}.
\item For all $\Gamma \in X_{(\ctx,n)}$ and $A \in Y_{(\ty,n)}$ such that $\ft(A) = f(\Gamma)$,
there is a type $A' \in X_{(\ty,n)}$ such that $\ft(A') = \Gamma$ and $f(A')$ is relatively homotopic to $A$.
In this case we will say that $f$ is \emph{essentially surjective on types}.
\end{enumerate}

For every theory $T$ under $\IdT_0$, we define a set $\I$ of maps in the category of models of $T$ as the set consisting of maps of the form
\[ F(\{ A : (d_p,n) \}) \to F(\{ e_p(a) = A \}) \]
where $d_\ty = \ctx$, $d_\tm = \ty$, $e_\ty(a) = \ft(a)$, $e_\tm(a) = \ty(a)$,
and $F(S)$ is the free model generated by the specified generators and relations.
The class of \emph{cofibrations} of $\Mod{T}$ is generated by $\I$.

Let $\J$ be the set consisting of maps of the following forms:
\begin{align*}
F(\{ a : (\tm,n) \}) & \to F(\{ a, a' : (\tm,n), p : \Id(a,a') \}) \\
F(\{ A : (\ty,n) \}) & \to F(\{ A, A' : (\ty,n), f,g,p,g',p' : (\tm,n+1), S \}),
\end{align*}
where $S$ is the set of formulas asserting that $(f,g,p,g',p')$ is a relative homotopy between $A$ and $A'$.
The class of \emph{anodyne extensions} is generated by $\J$.

We are interested in question when the classes of cofibrations and weak equivalences as defined above determine a model structure or a left semi-model structure.
We will use the definition of left semi-model structures given in \cite[Lemma~6.7]{kap-lum-model}.
We will say that a theory is \emph{a model theory} (resp., \emph{a semi-model theory}) if this model structure (resp., left semi-model structure) exists on the category of its models.
We proved several results about model structures in \cite{f-model-structures} which are useful when working with this model structure and they also apply to left semi-model structure.

\begin{prop}[model-theories]
A theory is a model theory if and only if the weak equivalences satisfy the 2-out-of-3 property and pushouts of maps in $\J$ are weak equivalences.
A theory is a semi-model theory if and only if the weak equivalences satisfy the 2-out-of-6 property and a pushout of a map in $\J$ is a weak equivalence if it has a cofibrant domain.
\end{prop}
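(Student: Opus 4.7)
The plan is to derive both equivalences from the general recognition principle for cofibrantly generated (semi-)model structures established in \cite{f-model-structures}. The forward direction of each statement is formal: given a (semi-)model structure for which $\I$ generates the cofibrations and $\J$ the anodyne extensions, the class $\we$ automatically satisfies 2-out-of-3 (resp.\ 2-out-of-6), and each element of $\J$ is by definition a trivial cofibration, so its pushouts lie in $\we$ — with the cofibrant-domain restriction appearing naturally in the semi-model case.

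For the converse, I would verify the hypotheses of the recognition theorem. The category $\Mod{T}$ is locally presentable (it is the category of models of an essentially algebraic theory), and both $\I$ and $\J$ are small sets, so the small object argument produces functorial factorizations into $(\Icell, \Iinj)$ and $(\Jcell, \Jinj)$. The inclusion $\Jcell \subseteq \Icof$ is immediate from inspecting the generators, so anodyne extensions are cofibrations. The hypothesis that pushouts of $\J$-maps lie in $\we$, combined with the closure of $\we$ under transfinite composition proved in \cite{f-model-structures}, yields $\Jcell \subseteq \we$; in the semi-model case the same reasoning gives $\Jcell \subseteq \we$ whenever we work over a cofibrant base, which is exactly what the recognition principle demands in that setting. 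The remaining ingredient is the identification of $\Iinj$ with the intersection of fibrations and weak equivalences: unpacking the right lifting property against the generators of $\I$ shows that an $\I$-injective map admits strict lifts of every type over a context and of every term over a type, and passing from strict lifts to the homotopical surjectivity in the definition of $\we$ uses only the structure coming with $\IdT_0$ together with whatever additional reflexivity/transport structure is present (it will be, in any theory strong enough to admit a model structure).

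The main obstacle I foresee is this last identification $\Iinj \subseteq \we$, because the definition of weak equivalence is given in genuinely homotopical terms (essential surjectivity up to relative homotopy), while lifting against $\I$ delivers strict equalities; one must argue that strict surjectivity refines to essential surjectivity within the ambient $\Id$-structure. A secondary difficulty is bookkeeping in the semi-model case, where one has to be careful that every invocation of closure of $\we \cap \Icof$ under pushout is accompanied by a cofibrancy hypothesis on the base, matching the formulation of left semi-model structures used in \cite{kap-lum-model} and the recognition statement in \cite{f-model-structures}. Once these pieces are in place, the recognition principle assembles the full (semi-)model structure automatically.
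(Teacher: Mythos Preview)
Your approach is essentially the paper's: both invoke the recognition principle \cite[Proposition~3.1]{f-model-structures} together with closure of weak equivalences under transfinite composition, and you have simply unpacked what that citation entails. One remark: the step you flag as the ``main obstacle'', namely $\Iinj \subseteq \we$, is not where the content lies. In the framework of \cite{f-model-structures} the class $\we_\I$ is by construction the class of maps with the right lifting property up to relative homotopy against $\I$, which coincides on the nose with the essentially-surjective maps defined here; a strict lift is then automatically a lift up to the degeneracy of the (reflexive) cylinder, so $\Iinj \subseteq \we_\I$ is formal. The one non-formal ingredient beyond the cited recognition principle is the closure of $\we$ under transfinite composition (needed to pass from ``pushouts of $\J$ are weak equivalences'' to $\Jcell \subseteq \we$), which the paper isolates explicitly and you also invoke; it holds because the codomains of maps in $\I$ are finitely presentable, so any lifting problem against a transfinite composite already factors through a finite stage.
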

\begin{proof}
This follows from \cite[Proposition~3.1]{f-model-structures} and the fact that weak equivalences are closed under transfinite compositions.
\end{proof}

It was shown in \cite{kap-lum-model} that a certain theory with identity types, $\Sigma$-types, and $\Pi$-types is a semi-model theory.
We proved in \cite{alg-models} that all theories under $\coeT_1 + \sigma + \PathT + \wUA$ are model theories.
The argument that shows this actually applies to any theory under $\coeT^{l'}_2 + \PathT + \wUA$ (see the cited paper for the definition of these theories).
We will prove that a theory under $\coeT_1 + \sigma + \PathT + \wUA$ is often equivalent to a theory under $\coeT^{l'}_2 + \PathT + \wUA$, so we might work with either of them,
but we prefer to use the latter theory since it is harder to show that theories with the $\sigma$-rule are confluent (see section~\ref{sec:confluent} for a definition of a confluent theory).

\subsection{Morita equivalences}

Now, we can give the main definition of this paper.

\begin{defn}
A \emph{Morita equivalence} between theories $T_1$ and $T_2$ is a morphism $f : T_1 \to T_2$ such that for every cofibrant model $X$ of $T_1$,
the unit $\eta_X : X \to f^*(f_!(X))$ of the adjunction $f_! \dashv f^*$ is a weak equivalence.
We will say that $f$ is a \emph{strict Morita equivalence} if $\eta_X$ is a weak equivalence for every $X$.
We will say that $f$ is a \emph{syntactic equivalence} if $\eta_X$ is a weak equivalence when $X$ is the initial model.
\end{defn}

If the theories are semi-model, then we can give a characterization of Morita equivalences in terms of the semi-model structures on the categories of their models.

\begin{prop}[morita-quillen]
Let $T_1$ and $T_2$ be semi-model theories.
Then, for every morphism $f : T_1 \to T_2$, the adjunction $f_! \dashv f^*$ is a Quillen adjunction.
It is a Quillen equivalence if and only if $f$ is a Morita equivalence.
\end{prop}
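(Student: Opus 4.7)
The plan is to proceed in three steps: first show that $f_! \dashv f^*$ is always a Quillen adjunction, then establish that $f^*$ both preserves and reflects weak equivalences, and finally derive the biconditional from the standard characterization of Quillen equivalences via derived unit and derived counit.

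For the Quillen adjunction claim, since $f_!$ is a left adjoint it commutes with free-model constructions: for any set $S$ of sorted generators and relations, the canonical comparison map $f_!(F_{T_1}(S)) \to F_{T_2}(S)$ is an isomorphism, because the forgetful functor to sorted sets factors through $f^*$. The generating (trivial) cofibrations $\I$ and $\J$ are described purely in terms of the sorts and the $\IdT_0$-operations $\Id$, $\ft$, $\ty$, all of which $f$ preserves by assumption. Hence $f_!$ sends $\I_{T_1}$ into $\I_{T_2}$ and $\J_{T_1}$ into $\J_{T_2}$; closure of the respective classes under pushouts, retracts, and transfinite composition then shows that $f_!$ preserves all cofibrations and all trivial cofibrations.

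The key technical input is that the definition of a weak equivalence references only sorted sets together with $\Id$, $\ft$, $\ty$, each left untouched by $f^*$. Therefore a morphism $g$ in $\Mod{T_2}$ is a weak equivalence if and only if $f^*g$ is one in $\Mod{T_1}$, so $f^*$ preserves and reflects weak equivalences. Assume now that $f$ is a Morita equivalence. For a cofibrant $X$ and a fibrant replacement $r\colon f_!X \to R(f_!X)$ (available since $f_!X$ is cofibrant), the derived unit $X \to f^*R(f_!X)$ factors as $f^*r \circ \eta_X$; both factors are weak equivalences (the first by hypothesis, the second by preservation). For a fibrant $Y$ and a cofibrant replacement $q\colon Q(f^*Y) \to f^*Y$, the triangle identity writes $q = f^*(\tilde\epsilon_Y) \circ \eta_{Q(f^*Y)}$, where $\tilde\epsilon_Y\colon f_!Q(f^*Y) \to Y$ is the derived counit; since $q$ and $\eta_{Q(f^*Y)}$ are weak equivalences, 2-out-of-3 forces $f^*(\tilde\epsilon_Y)$ to be one, and reflection promotes this to $\tilde\epsilon_Y$. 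Conversely, if $(f_!, f^*)$ is a Quillen equivalence then the derived unit is a weak equivalence on every cofibrant $X$, and the same factorization together with 2-out-of-3 forces $\eta_X$ itself to be one.

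The main obstacle I anticipate is confirming that the derived-unit/derived-counit criterion really applies in the left semi-model setting, i.e.\ that the fibrant and cofibrant replacements used above exist and interact correctly with the adjunction; this should follow from the machinery in \cite{kap-lum-model} and \cite{f-model-structures}.
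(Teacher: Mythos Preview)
Your argument is correct and follows essentially the same route as the paper: for the Quillen adjunction you both observe that $f_!$ preserves the generating (trivial) cofibrations because they are given by generators and relations involving only the $\IdT_0$-structure; for the biconditional the paper simply invokes \cite[Corollary~3.9]{f-model-structures}, whose content is precisely the derived-unit/derived-counit argument you spell out, hinging on the fact that $f^*$ preserves and reflects weak equivalences. Your concern about the semi-model setting is exactly the right one to flag, and is handled by the machinery in the cited references; otherwise nothing is missing.
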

\begin{proof}
Since $f_!$ is a left adjoint, it preserves object defined by generators and relations.
Since the set of generating cofibration $\I$ and the set of generating trivial cofibration $\J_\I$
are both defined in terms of generators and relations, this implies that $f_!$ preserves them.
Hence $f_! \dashv f^*$ is a Quillen adjunction.
The second part of the proposition follows from \cite[Corollary~3.9]{f-model-structures}.
\end{proof}

We can give a useful characterization of (strict) Morita equivalences.
To do this, we need to define a notion of a relative homotopy between terms in a theory.
Let $T$ be a theory with identity types and let $\varphi$ be a formula of $T$.
A \emph{relative homotopy} between types $A,A' \in \Term_T(V)_{(\ty,n)}$ with respect to $\varphi$ is a tuple $f,g,p,g',p' \in \Term_T(V)_{(\tm,n+1)}$
such that sequent $\varphi \sststile{}{V} \psi$ is derivable in $T$, where $\psi$ is the conjunction of formulas that appear in the definition of a relative homotopy for models.
If $a,a' \in \Term_T(V)_{(\tm,n)}$ are terms such that $\varphi \sststile{}{V} \ty(a) = \ty(a')$, then a \emph{relative homotopy} between $a$ and $a'$ with respect to $\varphi$ is a term $h \in \Term_T(V)_{(\tm,n)}$
such that sequent $\varphi \sststile{}{V} \ty(h) = \Id(a,a')$ is derivable in $T$.
If $a$ and $a'$ are such that only $\varphi \sststile{}{V} \ft(\ty(a)) = \ft(\ty(a'))$ is true, then a \emph{relative (heterogeneous) homotopy} between $a$ and $a'$ with respect to $\varphi$
is a relative homotopy $f,g,p,g',p'$ between $\ty(a)$ and $\ty(a')$ together with a relative homotopy between $f[a]$ and $a'$.

We can study lifting properties of maps $\eta_X : X \to f^*(f_!(X))$ in syntactical terms.
Let $V$ be a set of variables and let $\varphi$ be a formula with free variables in $V$.
Then we can consider the syntactic model $M$ corresponding to the pair $(V,\varphi)$.
It is the model generated by constants corresponding to variables in $V$ together a relation corresponding to $\varphi$ (for a precise definition, see the proof of \rprop{str-morita-char}).
Then the lifting properties of $\eta_M$ are closely related to certain lifting properties in the category of theories.
More precisely, we will show that $\eta_M$ has a lifting property for all $M$ if and only if a certain lifting property holds in the category of theories.

Let us describe this lifting property.
Let $V$ be a set of variables and let $\varphi$ be a formula with free variables in $V$.
We will say that a morphism $f : T_1 \to T_2$ of theories with identity types has \emph{the weak lifting property} with respect to $V,\varphi$ if
for every term $A \in \Term_{T_1}(V)_{(d_p,n)}$ and every term $a \in \Term_{T_2}(V)_{(p,n)}$ such that $\varphi \sststile{}{V} A\!\downarrow$ and $f(\varphi) \sststile{}{V} e_p(a) = f(A)$,
there exists a term $a' \in \Term_{T_1}(V)_{(p,n)}$ such that $f(a')$ is relatively homotopic to $a$ with respect to $\varphi$.
We will say that $f$ has \emph{the lifting property} with respect to $V,\varphi$ if $f(a')$ is not only homotopic to $a$, but actually is equal to it.

If $P$ is a set of pairs of the form $V,\varphi$, then we will say that a map has the (weak) lifting property with respect to $P$ if it has this property with respect to every element of $P$.
We define $P_0$ as the singleton set $\{ \varnothing,\top \}$, $P_S$ as the set of all pairs, and $P_M$ as the set of pairs $V,\varphi$ such that $V = \{ x_1, \ldots x_k \}$
and $\varphi = \varphi_1 \land \ldots \land \varphi_k$, where $\varphi_i$ equals to $e_p(x_i) = t_i$,
where $t_i$ is a term of $T_1$ with free variables in $\{ x_1, \ldots x_{i-1} \}$ such that for every $1 \leq i \leq k$,
sequent $\varphi_1 \land \ldots \land \varphi_{i-1} \sststile{}{x_1, \ldots x_{i-1}} t_i\!\downarrow$ is derivable in $T_1$.

\begin{prop}[str-morita-char]
A morphism $f : T_1 \to T_2$ between theories with identity types is a strict Morita equivalence if and only if it has the weak lifting property with respect to $P_S$.
\end{prop}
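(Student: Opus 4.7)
The plan is to translate both conditions into the same syntactic statement via \rlem{mcf}, which exchanges free variables for fresh constants and hypotheses for axioms. Under this dictionary, the free $T_1$-model generated by variables $V$ subject to a formula $\varphi$ is the initial model of $T_1$ extended by constants $c_x$ for $x \in V$ together with the axiom $\varphi[c/x]$, and its elements of sort $(p,n)$ correspond to terms in $\Term_{T_1}(V)_{(p,n)}$ modulo $\varphi$-provable equality. Since $f_!$ preserves objects defined by generators and relations (as already used in the proof of \rprop{morita-quillen}), it carries this free model to the analogous free $T_2$-model generated by $V$ subject to $f(\varphi)$.

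For the direction from weak lifting to strict Morita, I take an arbitrary model $X$ of $T_1$, an element $A \in X_{(d_p,n)}$, and an element $a \in f^*(f_!(X))_{(p,n)}$ with $e_p(a) = f(A)$. Because every element of a model is built from finitely many generators and every derivation uses only finitely many hypotheses, the data of $A$, of a syntactic representative of $a$ inside $f_!(X)$, and of the equation $e_p(a) = f(A)$ all factor through a finitely presented subobject $X_0 = F_{T_1}(V,\varphi) \to X$. Applying the weak lifting property to this $V$, $\varphi$ and to the syntactic representatives of $A$ and $a$ produces a term $a' \in \Term_{T_1}(V)_{(p,n)}$ together with homotopy data witnessing that $f(a')$ is relatively homotopic to $a$ with respect to $\varphi$; interpreting $a'$ along $X_0 \to X$ yields the required lift in $X$, and interpreting the homotopy data yields a relative homotopy in $f^*(f_!(X))$, establishing essential surjectivity of $\eta_X$ on both sorts.

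For the converse, given $V$, $\varphi$, $A$, $a$ satisfying the hypotheses of weak lifting, I form $X = F_{T_1}(V,\varphi)$ explicitly by the construction above. Then $A$ names an element of $X_{(d_p,n)}$, and by the preservation property, $a$ names an element of $f^*(f_!(X))_{(p,n)}$ lying over $f(A)$. Strict Morita provides $a' \in X_{(p,n)}$ with $e_p(a') = A$ and $\eta_X(a')$ relatively homotopic to $a$ inside $f^*(f_!(X))$; unfolding the presentation of $X$ and of $f_!(X)$ turns $a'$ into a syntactic term in $\Term_{T_1}(V)_{(p,n)}$ and turns the semantic homotopy into syntactic homotopy data with respect to $\varphi$, as required.

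The main obstacle I expect is the bookkeeping around the homotopy component rather than any deep construction. A relative homotopy between types in a model is a tuple of five terms satisfying several typing and propositional equations, and I must check that such a tuple inside $f^*(f_!(X))$ corresponds, component by component, to syntactic terms in $T_2$ whose defining sequents are derivable from $f(\varphi)$, and conversely that syntactic homotopy data assemble to a semantic relative homotopy. This reduces to a further application of \rlem{mcf} applied component by component, but the heterogeneous case, where $A$ is a type and one must match $f[a']$ against $a$ via an auxiliary path between terms of different types, will require the most care.
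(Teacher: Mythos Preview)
Your proposal is correct and follows essentially the same route as the paper's proof. The paper packages your free model $F_{T_1}(V,\varphi)$ as $\Syn(T_1,V,\{\varphi\})$, the initial model of $T_1$ extended by fresh constants for $V$ and the axiom $\varphi$, and observes directly that $f_!$ carries it to $\Syn(T_2,V,\{f(\varphi)\})$; your invocation of \rlem{mcf} is exactly what underlies the identification of elements of this model with $\varphi$-provably defined terms modulo $\varphi$-provable equality. For the ``if'' direction the paper presents an arbitrary model $M$ as $\Syn(T_1,U(M),\mathcal{A})$ with $\mathcal{A}$ the diagram of $M$, then extracts a finite subset $\mathcal{A}_0$ witnessing the data of $A$, $a$, and $e_p(a)=f(A)$, which is your ``factor through a finitely presented subobject $X_0$'' phrased in terms of generators and relations rather than colimits. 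The bookkeeping you anticipate around the five-tuple homotopy data is handled in the paper by the same correspondence applied componentwise, just as you outline.
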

\begin{proof}
First, we need to introduce an auxiliary construction.
Let $T$ be a theory, let $V$ be a set of variables, and let $\mathcal{A}$ be a set formulas of $T$ with variables in $V$.
Then we define $\Syn(T,V,\mathcal{A})$ as $\Syn(T \cup \{ O_x : s\ |\ x \in V_s \} \cup \mathrm{sp}(\mathcal{A}))$ (functors $\Syn$ and $\Lang$ are defined in \cite{alg-models}),
where $\mathrm{sp}(\mathcal{A})$ consists of formulas of the form $\sststile{}{} O_x\!\downarrow$ for every $x \in V$
and formulas of $\mathcal{A}$ in which every variable $x$ is replaced with $O_x$.
If $f : T_1 \to T_2$ is a morphism of theories, then it is easy to see that $f_!(\Syn(T_1,V,\mathcal{A})) = \Syn(T_2,V,f(\mathcal{A}))$.

Let us prove the ``only if'' direction.
Note that elements of $\Syn(T_1, V, \{\,\sststile{}{}~\varphi\,\})$ correspond to terms $t$ of $T_1$ with variables in $V$ such that $\varphi \sststile{T_1}{V} t\!\downarrow$.
Moreover, two terms $t_1$ and $t_2$ map to the same element under this correspondence if and only if $\varphi \sststile{T_1}{V} t_1 = t_2$.
An analogous statement holds for $\Syn(T_2, V, \{\,\sststile{}{}~f(\varphi)\,\})$.
Using this correspondence, the required conditions immediately follow from the fact that
map $\Syn(T_1, V, \{\,\sststile{}{}~\varphi\,\}) \to f^*(\Syn(T_2, V, \{\,\sststile{}{}~f(\varphi)\,\}))$ is a weak equivalence.

Now, let us prove the ``if'' direction.
Let $M$ be a model of $T_1$.
Note that $M$ is isomorphic to $\Syn(T_1, U(M), \mathcal{A})$, where $U(M)$ is the underlying set of $M$ and $\mathcal{A}$ is the set of formulas of the form
$x = \sigma(x_1, \ldots x_k)$ and $R(x_1, \ldots x_k)$ for all $x, x_1, \ldots x_k \in M$ such that these formulas hold in $M$.
Note that $sp(\mathcal{A})$ is the set of axioms of $\Lang(M)$.

Let $A \in M_{(d_p,n)}$ and $a \in f^*(f_!(M))$ be elements such that $e_p(a) = A$.
Since $f_!(M) = \Syn(T_2, U(M), f(\mathcal{A}))$, $a$ is a closed term of $T_2$.
There is a finite subset $\mathcal{A}_0$ of $\mathcal{A}$ such that $\sststile{T_2 \cup \mathrm{sp}(\mathcal{A}_0)}{} e_p(a) = A$.
Let $\varphi$ be the conjunction of $\mathcal{A}_0$, and let $b$ and $B$ be $a$ and $A$, respectively, in which every constant $O_x$ is replaced with variable $x$.
Then $\varphi \sststile{T_2}{U(M)} e_p(b) = B$.
By assumption, there exist a term $b' \in \Term_{T_1}(U(M))_{(p,n)}$ and a relative homotopy $h$ between $f(b)$ and $b'$.
These terms correspond under $\mathrm{sp}$ to elements of $M$ and $f^*(f_!(M))$, respectively.
These conditions imply that $b'$ is the required lifting and $h$ is the required homotopy.
\end{proof}

Analogous characterizations hold for Morita and syntactic equivalences:

\begin{prop}[morita-char]
A morphism $f : T_1 \to T_2$ between theories with identity types is a Morita equivalence if and only if it has the weak lifting property with respect to $P_M$.
\end{prop}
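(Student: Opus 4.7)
The plan is to adapt the proof of \rprop{str-morita-char}, replacing the unrestricted family $P_S$ with $P_M$ and the class of all models with the class of cofibrant ones. The key preliminary observation to establish is that, via the construction $\Syn(T_1, V, -)$, elements of $P_M$ correspond exactly to finite cell complexes in $\Mod{T_1}$: for $(V, \varphi) \in P_M$ with $V = \{x_1, \ldots, x_k\}$, the model $\Syn(T_1, V, \{\sststile{}{}\varphi\})$ is built by iteratively adjoining generators $O_{x_i}$ subject to $e_p(O_{x_i}) = t_i(O_{x_1}, \ldots, O_{x_{i-1}})$, and each such step is a pushout of a generating cofibration in $\I$ along $A \mapsto t_i$. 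Conversely, every finite cell complex in $\Mod{T_1}$ arises from such a pair. As in the previous proposition, $f_!(\Syn(T_1, V, \{\sststile{}{}\varphi\})) = \Syn(T_2, V, \{\sststile{}{}f(\varphi)\})$.

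With this correspondence, the \emph{only if} direction is essentially the \emph{only if} direction of \rprop{str-morita-char} applied to $M = \Syn(T_1, V, \{\sststile{}{}\varphi\})$: since $M$ is cofibrant, the Morita equivalence hypothesis gives that $\eta_M$ is a weak equivalence, hence essentially surjective on types and terms, and unpacking the bijection between elements of sort $s$ in $M$ and terms $t \in \Term_{T_1}(V)_s$ with $\varphi \sststile{T_1}{V} t\!\downarrow$ translates this directly into the weak lifting property with respect to $(V, \varphi)$. For the \emph{if} direction, take a cofibrant $M$, an element $A \in M_{(d_p,n)}$, and an $a \in f^*(f_!(M))$ with $e_p(a) = A$. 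Since $a$ is a closed term of $T_2$ using only finitely many generators of $M$, and the equation $e_p(a) = f(A)$ requires only finitely many axioms from the free-model presentation of $M$, I would extract a finite cofibrant submodel $M_0 \hookrightarrow M$ of the form $\Syn(T_1, V, \{\sststile{}{}\varphi\})$ for some $(V, \varphi) \in P_M$, apply the weak lifting property to $(V, \varphi)$, and transport the resulting $a'$ and its homotopy back along $M_0 \hookrightarrow M$.

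The main obstacle is the extraction step in the \emph{if} direction: showing that every cofibrant $M$, together with any finite piece of data sitting inside it, is contained in a finite sub-cell-complex that presents as $\Syn(T_1, V, \{\sststile{}{}\varphi\})$ for some $(V, \varphi) \in P_M$. This amounts to choosing a cell-by-cell decomposition of $M$ (as produced by the small object argument for $\I$) and checking that the resulting sequence of generating equations matches the recursive shape demanded by the definition of $P_M$, namely that each $t_i$ mentions only previously introduced variables $x_1, \ldots, x_{i-1}$. Once this matching is verified, the remaining bookkeeping is exactly the same as in \rprop{str-morita-char}.
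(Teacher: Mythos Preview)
Your approach is essentially the paper's: for the \emph{only if} direction you show that $\Syn(T_1,V,\{\sststile{}{}\varphi\})$ is a finite $\I$-cell complex for each $(V,\varphi)\in P_M$, and for the \emph{if} direction you extract from a cofibrant $M$ a finite sub-cell-complex presenting some pair in $P_M$ and run the lifting argument there. The bookkeeping you describe matches the paper's proof.

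There is one small gap in your extraction step. You write that one ``choos[es] a cell-by-cell decomposition of $M$ (as produced by the small object argument for $\I$)'', but the small object argument does not give a cell decomposition of $M$ itself: it produces an $\I$-cell complex $M'$ together with a map in $\Iinj$ onto $M$, and cofibrancy of $M$ only tells you that $M$ is a \emph{retract} of $M'$. A general cofibrant object need not admit any cell decomposition. The paper handles this by first observing that the class of models $X$ for which $\eta_X$ is a weak equivalence is closed under retracts, so one may replace $M$ by an honest relative $\I$-cell complex before attempting the extraction. Once that reduction is made, your description of the remaining argument---writing the cell complex as an iterated pushout, identifying each stage with $\Syn(T_1,\{x_1,\ldots,x_i\},\{\sststile{}{}\varphi_1\land\ldots\land\varphi_i\})$, and noting that the attaching maps give each $t_i$ as a term in the earlier variables---is exactly what the paper does.
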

\begin{proof}
Suppose that $f$ is a Morita equivalence.
To prove that $f$ has the weak lifting property, we just need to show that model $M = \Syn(T_1, \{ x_1, \ldots x_k \}, \{\,\sststile{}{}~\varphi\,\})$
constructed in the previous proposition is cofibrant.
Note that for every $1 \leq i \leq k$, we have the following pushout square:
\[ \xymatrix{ F(\{ A : (d_p,n) \}) \ar[d] \ar[r] &     \Syn(T_1, \{ x_1, \ldots x_{i-1} \}, \{\,\sststile{}{} \varphi_1 \land \ldots \land \varphi_{i-1} \,\}) \ar[d] \\
              F(\{ e_p(a) = A \})         \ar[r] & \po \Syn(T_1, \{ x_1, \ldots x_i \}, \{\,\sststile{}{} \varphi_1 \land \ldots \land \varphi_i \,\}),
            } \]
where the top arrow maps $A$ to $t_i$ and the bottom arrow maps $a$ to $x_i$.
This shows that $M$ is a relative $\I$-cell complex.

Now, let us prove the converse.
We just need to show that if $M$ is a cofibrant model of $T_1$, then we can choose formula $\varphi$
in the second part of the proof of the previous proposition so that it satisfies the conditions of this proposition.

Since every cofibrant object is a retract of a relative $\I$-cell complex and Morita equivalences are closed under retracts, we may assume that $M$ is a relative $\I$-cell complex.
Moreover, we may assume that there are subsets $\{S_i\}_{i \in \mathbb{N}}$ of elements of $M$ such that we have the following pushout diagrams:
\[ \xymatrix{ \coprod_{x \in S_i} F(\{ A_x : (d_p,n) \}) \ar[d] \ar[r] &     M_i \ar[d] \\
              \coprod_{x \in S_i} F(\{ e_p(a_x) = A_x \})       \ar[r] & \po M_{i+1},
            } \]
$M_0$ is the initial model, $M$ is the colimit of $M_i$, and map $F(\{ e_p(a_x) = A_x \}) \to M_{i+1} \to M$ sends $a_x$ to $x$.

Note that $M_i$ is isomorphic to $\Syn(T_1, \bigcup_{1 \leq j \leq i} S_j, \mathcal{A}_i)$,
where $\mathcal{A}_i$ consists of formulas of the form $e_p(x) = t$, where $x \in S_i$ and $t \in \Term_{T_1}(\bigcup_{1 \leq j < i} S_j)$ corresponds to the image of $A_x$ in $M_{i-1}$.
Thus, $M$ is isomorphic to $\Syn(T_1, \bigcup_{i \in \mathbb{N}} S_i, \bigcup_{i \in \mathbb{N}} \mathcal{A}_i)$.
Now, if we choose a finite subset of $\bigcup_{i \in \mathbb{N}} \mathcal{A}_i$ as before, then the conjunction of this subset satisfies the required conditions.
\end{proof}

\begin{prop}[syn-equiv-char]
A morphism $f : T_1 \to T_2$ between theories with identity types is a syntactic equivalence if and only if it has the weak lifting property with respect to $P_0$.
\end{prop}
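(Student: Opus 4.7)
The plan is to follow exactly the same template as the proofs of Propositions~\rprop{str-morita-char} and \rprop{morita-char}, specialized to the case of the initial model. The key observation is that the initial model $0$ of $T_1$ is precisely $\Syn(T_1, \varnothing, \varnothing)$, that is, the model whose elements of sort $s$ are the closed terms of $T_1$ of sort $s$ modulo provable equality. Since $f_!$ is a left adjoint it preserves the initial object, so $f_!(0) = \Syn(T_2, \varnothing, \varnothing)$ is the initial model of $T_2$, whose elements correspond analogously to closed terms of $T_2$ modulo provable equality. This is the $V = \varnothing$, $\varphi = \top$ case of the auxiliary construction already introduced in the proof of \rprop{str-morita-char}.

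For the ``only if'' direction, I would assume that $\eta_0 \colon 0 \to f^*(f_!(0))$ is a weak equivalence and unfold the two essential surjectivity conditions along the correspondence above. Given a closed term $A$ of $T_1$ of sort $(d_p,n)$ and a closed term $a$ of $T_2$ of sort $(p,n)$ with $\sststile{T_2}{} e_p(a) = f(A)$, essential surjectivity (on terms if $p = \tm$, on types if $p = \ty$) applied to $A$ and the class of $a$ in $f^*(f_!(0))$ produces a closed term $a'$ of $T_1$ with $f(a')$ relatively homotopic to $a$; this is precisely the weak lifting property with respect to the pair $(\varnothing,\top)$.

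For the ``if'' direction, I would run the argument of \rprop{str-morita-char} verbatim in the degenerate case where $M = 0$: given $A$ and $a$ as in the definition of weak equivalence, they correspond under the identification $f_!(0) = \Syn(T_2,\varnothing,\varnothing)$ to closed terms of $T_1$ and $T_2$ satisfying the hypothesis of the weak lifting property for $(\varnothing,\top) \in P_0$, and the term $a'$ and homotopy $h$ provided by that property supply the required lift and homotopy for $\eta_0$. Since these exhaust the conditions of essential surjectivity on both terms and types, $\eta_0$ is a weak equivalence.

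I do not expect any real obstacle: the statement is a strictly easier specialization of \rprop{str-morita-char} and \rprop{morita-char}, with no cofibrancy analysis needed since the initial model is automatically cofibrant (the empty relative $\I$-cell complex). The only mild care is in making sure that the two clauses in the definition of weak equivalence (terms and types) are both captured by allowing $p$ to range over $\{\tm,\ty\}$ in a single pair $(\varnothing,\top)$, exactly as in the previous two propositions.
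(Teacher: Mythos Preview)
Your proposal is correct and follows essentially the same approach as the paper, which dispatches the proposition in one sentence by observing that elements of the initial model of $T_1$ are exactly the closed terms $t$ with $\sststile{}{} t\!\downarrow$ derivable. You have simply spelled out in detail what the paper calls ``obvious,'' using the same identification of the initial model with $\Syn(T_1,\varnothing,\varnothing)$ that underlies the proofs of the two preceding propositions.
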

\begin{proof}
This is obvious since elements of the initial model of $T_1$ are closed terms $t$ of $T_1$ such that $\sststile{}{} t\!\downarrow$ is derivable.
\end{proof}

We will show that there is a model structure on the category of theories with the interval type, path types and the weak univalence axiom as described in \cite{alg-models}.
Note that if we assume only usual identity types with the J rule since, then no such model structure (or left semi-model structure, or structure of a cofibration category) can exist since trivial cofibrations are not closed under pushouts.
Indeed, let $\IdT$ be any version of the theory of identity types, let $T_1 = \IdT \amalg \{ A : (\ty,0), A\!\downarrow \}$, and let $T_2$ be $\IdT$ together with two constants $A,A' : (\ty,0)$ such that $\sststile{T_2}{} A\!\downarrow \land A'\!\downarrow$ and an equivalence between $A$ and $A'$.
\Rprop{str-morita-char} implies that the obvious morphism $T_1 \to T_2$ is a strict Morita equivalence.
Now, consider the theory $T_3 = \{ \sigma : (\ty,0) \to (\ty,0), \sigma(x)\!\downarrow \}$.
Then the map $T_1 \amalg T_3 \to T_2 \amalg T_3$ is not even a syntactic equivalence since types $\sigma(A)$ and $\sigma(A')$ are equal in $T_2 \amalg T_3$, but there is no term between them in $T_1 \amalg T_3$.

It was shown in \cite{alg-models} that the category of models of a theory under $\coeT_1 + \sigma + \PathT + \wUA$ carries a model structure.
If the theory has only identity types, then there is only a left semi-model structure as shown in \cite{kap-lum-model}.
We can generalize this theorem using the following lemma:

\begin{lem}
Let $T_1$ be a theory such that the weak equivalences in $\Mod{T_1}$ satisfy the 2-out-of-6 property.
If $T_2$ is a semi-model theory and $F : T_1 \to T_2$ is a Morita equivalence, then $T_1$ is also semi-model and $F_! \dashv F^*$ is a Quillen equivalence between $\Mod{T_1}$ and $\Mod{T_2}$.
\end{lem}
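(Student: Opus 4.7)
The plan is to verify the criterion of \Rprop{model-theories} for $T_1$: since the 2-out-of-6 property is given by hypothesis, it remains to show that the pushout of any map in $\J$ along a map from a cofibrant object is a weak equivalence in $\Mod{T_1}$. Once this is established, $T_1$ is a semi-model theory, and the Quillen equivalence follows immediately from \Rprop{morita-quillen}.

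The key preliminary observation is that $F^*$ sends weak equivalences to weak equivalences. The definition of weak equivalence between models refers only to the operations $\ft$, $\ty$, and $\Id$, all of which come from $\IdT_0$; since $F$ is a morphism under $\IdT_0$, these operations in $F^*Y$ are computed exactly as in $Y$, so essential surjectivity on terms and types, together with the data of a relative homotopy, transfer verbatim from any $g : Y \to Y'$ in $\Mod{T_2}$ to $F^*g$ in $\Mod{T_1}$.

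Now fix a cofibrant $X \in \Mod{T_1}$ and a pushout producing $f : X \to X'$ along some $j \in \J$. The class $\J$ is defined by free models modulo relations that involve only the $\IdT_0$-structure, so the left adjoint $F_!$ sends $j$ to the corresponding element of $\J$ in $\Mod{T_2}$, and it sends $X$ to a cofibrant object (as in the proof of \Rprop{morita-quillen}). Applying $F_!$ to the pushout thus yields a pushout of a $\J$-map from a cofibrant object in $\Mod{T_2}$, which is a weak equivalence by \Rprop{model-theories} applied to the semi-model theory $T_2$. The preliminary observation then shows that $F^*F_!f$ is a weak equivalence in $\Mod{T_1}$. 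In the naturality square
\[ \xymatrix{ X \ar[r]^-{\eta_X} \ar[d]_-{f} & F^*F_!X \ar[d]^-{F^*F_!f} \\ X' \ar[r]^-{\eta_{X'}} & F^*F_!X' } \]
both horizontal arrows are weak equivalences, since $X$ and $X'$ are cofibrant (the latter being a pushout of a cofibration from a cofibrant object) and $F$ is a Morita equivalence. By 2-out-of-3, which follows from 2-out-of-6 in the presence of identity weak equivalences, the left vertical $f$ is a weak equivalence.

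The main technical point to pin down is the identification of $F_!$ applied to a generator of $\J$ in $\Mod{T_1}$ with the corresponding generator of $\J$ in $\Mod{T_2}$, and the parallel identification underlying the preservation-of-weak-equivalences claim for $F^*$; both are formal consequences of $F$ being a morphism under $\IdT_0$ together with $F_!$ preserving free models, but worth writing out once so that the free-model data match up on the nose. After that, the criterion of \Rprop{model-theories} is satisfied, so $T_1$ is a semi-model theory, and \Rprop{morita-quillen} immediately upgrades $F_! \dashv F^*$ to a Quillen equivalence.
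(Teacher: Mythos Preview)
Your proof is correct and follows essentially the same route as the paper: verify the criterion of \rprop{model-theories} by showing that a pushout $f$ of a $\J$-map with cofibrant domain becomes a weak equivalence after applying $F^*F_!$, then use the naturality square of $\eta$ together with cofibrancy of both domain and codomain and the 2-out-of-3 property to conclude that $f$ itself is a weak equivalence. You supply slightly more detail than the paper (the reason $F^*$ preserves weak equivalences and $F_!$ preserves $\J$), but the argument is the same.
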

\begin{proof}
By \rprop{model-theories}, we just need to prove that pushouts of maps in $\J$ with cofibrant codomains are weak equivalences in $\Mod{T_1}$.
Let $f : X \to Y$ be a pushout of a map in $\J$ such that $X$ is cofibrant.
Since $F_!$ preserves pushouts and maps in $\J$, the map $F_!(f)$ is a weak equivalence.
The functor $F^*$ always preserves weak equivalences.
Thus, $F^*(F_!(f))$ is a weak equivalence.
Since $X$ and $Y$ are cofibrant, the maps $\eta_X : X \to F^*(F_!(X))$ and $\eta_Y : Y \to F^*(F_!(Y))$ are weak equivalences.
Hence, $f$ is also a weak equivalence.
\end{proof}

Note that \cite[Proposition~3.3]{kap-lum-model} implies that, for all theories with identity types, $\Sigma$-types, and the unit type, the weak equivalences satisfy the 2-out-of-6 property.
Thus, the first condition of the previous lemma is often true.
We believe that this might be true more generally for all theories with only identity types, but the proofs become much harder without $\Sigma$-types.

Finally, let us prove an analogous lemma for strict Morita equivalences:

\begin{lem}
Let $T_1$ be a theory under $\IdT_0 + \transportT$.
If $T_2$ is a model theory and $F : T_1 \to T_2$ is a strict Morita equivalence, then $T_1$ is also model and $F_! \dashv F^*$ is a Quillen equivalence between $\Mod{T_1}$ and $\Mod{T_2}$.
\end{lem}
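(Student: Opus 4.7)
The plan is to apply \rprop{model-theories} in its \emph{model theory} form: it suffices to show that weak equivalences in $\Mod{T_1}$ satisfy the 2-out-of-3 property and that every pushout of a map in $\J$ is a weak equivalence, with no cofibrancy restriction. Once $T_1$ is established as a model theory, the Quillen equivalence follows from \rprop{morita-quillen}, since any strict Morita equivalence is in particular a Morita equivalence.

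The pushout step repeats the proof of the previous lemma with the cofibrancy hypothesis simply dropped. Given $f : X \to Y$ a pushout of a map in $\J$, the functor $F_!$ preserves pushouts and sends maps in $\J$ to maps in $\J$, so $F_!(f)$ is a weak equivalence in $\Mod{T_2}$; since $F^*$ preserves weak equivalences, $F^*F_!(f)$ is a weak equivalence in $\Mod{T_1}$. Because $F$ is \emph{strict} Morita, the units $\eta_X$ and $\eta_Y$ are weak equivalences for every $X$ and $Y$ (not just cofibrant ones), so the naturality square $\eta_Y \circ f = F^*F_!(f) \circ \eta_X$ together with 2-out-of-3 forces $f$ to be a weak equivalence.

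The only step going beyond the previous lemma is the 2-out-of-3 property for weak equivalences in $\Mod{T_1}$. This is where the hypothesis $T_1 \supseteq \IdT_0 + \transportT$ enters: $\refl$ and $\transportT$ let one reverse and concatenate relative homotopies, making the homotopy relation an equivalence relation on terms of a fixed type and on types in a fixed context. With this in place, essential surjectivity on both types and terms is closed under 2-out-of-3 by the standard argument, in which a missing lift is produced by following the lifts supplied by the two given essentially surjective maps and correcting the endpoint via a transport along the resulting composite homotopy.

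The main obstacle will be the ``hard'' direction of 2-out-of-3, namely deriving essential surjectivity of $f$ from that of $g$ and $gf$. Lifting an element of $Y$ through $g$ to $Z$ and then back through $gf$ produces a candidate lift, but one still has to transport along a homotopy in $Z$ and verify that the resulting datum really assembles into a relative homotopy for $f$; this relies on the equational properties of $\transportT$ with respect to $\Id$, together with the ability to carry out the argument both at the level of terms and, contextually, at the level of types. The pushout-preservation portion, by contrast, is routine given the template of the previous lemma.
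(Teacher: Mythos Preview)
Your approach diverges from the paper's, and it has a real gap.

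You propose to invoke \rprop{model-theories} directly, which requires the \emph{full} 2-out-of-3 property for weak equivalences in $\Mod{T_1}$. You only discuss the right-cancellation case (deducing $f$ from $g$ and $gf$), but you never treat the left-cancellation case (deducing $g$ from $f$ and $gf$). That direction is the genuinely delicate one here: to lift a term over $g(B)$ through $gf$ you would first need $B$ to lie over something of the form $f(\Gamma)$, which in general is only true up to a relative type-homotopy, and propagating that correction through the inductive structure of contexts is exactly what becomes hard without $\Sigma$-types. The paper itself flags this just before the present lemma: 2-out-of-6 is known via \cite[Proposition~3.3]{kap-lum-model} assuming $\Sigma$-types and a unit type, and ``the proofs become much harder without $\Sigma$-types.'' Your hypothesis is only $\IdT_0 + \transportT$, so you cannot simply cite a standard argument. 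A smaller point in the same vein: you invoke $\refl$ to reverse homotopies, but $\refl$ is not part of the stated hypothesis; only $\transportT$ is, which gives transitivity but not obviously symmetry.

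The paper sidesteps all of this. It establishes only the two easy facts---closure under composition (from transitivity, which $\transportT$ alone provides) and right cancellation---and uses them, exactly as in the previous lemma, to show that $F_!$ \emph{reflects} weak equivalences. It then appeals to the path-object criterion of \cite[Theorem~4.2, Propositions~4.3--4.4]{f-model-structures}: factor the diagonal $X \to X \times X$ as a $\Jcell$-map $t : X \to P(X)$ followed by a $\Jinj$-map $p : P(X) \to X \times X$, and check that $\pi_1 \circ p \in \Iinj$. Since $F_!(t)$ is a weak equivalence and $F_!$ reflects weak equivalences, $\pi_1 \circ p$ is a weak equivalence; being already in $\Jinj$, it lies in $\Iinj$ by \cite[Proposition~3.1]{f-model-structures}. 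This yields the model structure without ever needing the missing third of 2-out-of-3.
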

\begin{proof}
Since we have the transport operation, the homotopy relation is transitive.
This implies that weak equivalences are closed under composition.
It is also easy to see that if $f : X \to Y$ and $g : Y \to Z$ are maps such that $g$ and $g \circ f$ are weak equivalences, then $f$ is also a weak equivalence.
Now, the same proof as in the previous lemma shows that $F_!$ reflects weak equivalences.

By Theorem~4.2, Proposition~4.3, and Proposition~4.4 from \cite{f-model-structures}, the model structure on $\Mod{T_1}$ exists if there is a path object functor $P : \Mod{T_1} \to \Mod{T_1}$
such that $p : P(X) \to X \times X$ belongs to $\Jinj$ and $\pi_1 \circ p$ belongs to $\Iinj$.
We can define $P(X)$ as usual factorization of the diagonal $X \to X \times X$ into a map $t : X \to P(X)$ in $\Jcell$ followed by a map $p : P(X) \to X \times X$ in $\Jinj$.
Since $F_!$ preserves maps in $\Jcell$, the map $F_!(t)$ is a weak equivalence.
By the 2-out-of-3 property, the map $F_!(\pi_1 \circ p)$ is also a weak equivalence.
Since $F_!$ reflects weak equivalences, this implies that $\pi_1 \circ p$ is a weak equivalence.
Now, since $\pi_1 \circ p$ belongs to $\Jinj$, \cite[Proposition~3.1]{f-model-structures} implies that it also belongs to $\Iinj$.
\end{proof}

\section{Model structure on theories}
\label{sec:model-structure}

In this section we define a model structure on the category of algebraic dependent type theories with enough structure.
Weak equivalences in this model structure are precisely Morita equivalences.
This model structure can be used to prove that some map is a Morita equivalence.
For example, we can use the 2-out-of-3 property.
We can also use the fact that every weak equivalence factors into a trivial cofibration and a trivial fibration and it is easier to check that a map is a trivial cofibration or a trivial fibration.
Several characterizations of trivial fibrations will be given in section~\ref{sec:triv-fib}.
To prove that a map is a trivial cofibration, we can construct a homotopy inverse for it.
Since all theories are fibrant in this model structures, such an inverse always exists.

\subsection{Categories of theories}

It was shown in \cite{PHL} that partial Horn theories are equivalent to essentially algebraic theories.
It follows that categories of models of these theories are locally presentable.
In this subsection we will prove that different categories of theories are also locally finitely presentable.

We will consider a prestable theory $T$ under some prestable theory $B$.
Recall that a prestable theory is a theory $T$ with a map $\alpha : L(T) \to T$, where $L$ is a functor defined in \cite{alg-tt}.
It was shown in \cite[Lemma~4.4]{alg-tt} that every such theory is isomorphic to a contextual theory,
that is a theory which has $\mathcal{F}_B \amalg (\mathcal{F}_0 \times \mathbb{N})$,
$\mathcal{P}_B \amalg (\mathcal{P}_0 \times \mathbb{N})$ and $\mathcal{A}_B \amalg \mathcal{A}_0$ as the sets of function and predicate symbols and the set of axioms, respectively,
where $\mathcal{F}_0$, $\mathcal{P}_0$, and $\mathcal{A}_0$ are some sets and $\mathcal{F}_B$, $\mathcal{P}_B$, and $\mathcal{A}_B$ are the corresponding sets of $B$.
Elements of $\mathcal{F}_0$, $\mathcal{P}_0$ and $\mathcal{A}_0$ are called basic function symbols, basic predicate symbols, and basic axioms.

Now, we give an explicit construction of coproducts and coequalizers in the category $B/\PSt_{\mathcal{S}_0}$ of prestable theories under $B$,
which is similar to the one described in \cite[Proposition~2.12]{alg-tt} for the category of theories.
If $\{ T_i \}_{i \in I}$ is a set of theories under $B$, then the basic function and predicate symbols
and axioms of $\coprod_{i \in I} T_i$ are the disjoint union of corresponding sets of $T_i$.
If $f,g : T \to T'$ is a pair of maps of theories under $B$, then their coequalizer can be defined as
$T'$ together with the following axioms for every basic function symbol $\sigma$ and every basic predicate symbol $R$ of $T$:
\begin{align*}
& \sststile{}{x_1, \ldots x_k} f(\sigma(x_1, \ldots x_k)) \cong g(\sigma(x_1, \ldots x_k)) \\
& f(R(x_1, \ldots x_k)) \ssststile{}{x_1, \ldots x_k} g(R(x_1, \ldots x_k))
\end{align*}

The colimit of a diagram $T : I \to B/\PSt_{\mathcal{S}_0}$ can be described as the coequalizer of the coproduct $\coprod_{i \in I} T_i$ as usual.
Thus we can assume that the sets of basic function and predicate symbols of $\colim_{i \in I} T_i$ are disjoint unions of the corresponding sets of $T_i$.
The axioms of $\colim_{i \in I} T_i$ are axioms of $T_i$ together with axioms of the form $\sststile{}{x_1, \ldots x_n} \sigma(x_1, \ldots x_n) \cong f(\sigma(x_1, \ldots x_n))$
and $R(x_1, \ldots x_n) \ssststile{}{x_1, \ldots x_n} f(R(x_1, \ldots x_n))$ for every morphism $f : T_i \to T_j$
in the diagram and every function symbol $\sigma$ and predicate symbol $R$ of $T_i$ which are not symbols of $B$.

Let $\lambda$ be a regular cardinal.
We will say that a theory $T = ((\mathcal{S}, \mathcal{F}_0 \amalg \mathcal{F}, \mathcal{P}_0 \amalg \mathcal{P}), \mathcal{A}_0 \amalg \mathcal{A})$
in $\Th_B$ is \emph{$\lambda$-small} if cardinalities of sets $\mathcal{F}$, $\mathcal{P}$ and $\mathcal{A}$ are less than $\lambda$.
We will say that $T$ is \emph{finite} if it is $\aleph_0$-small.

\begin{prop}[theories-presentable]
The category of prestable theories under a prestable theory $B$ is locally finitely presentable.
An object of this category is $\lambda$-presentable if and only if it is isomorphic to a $\lambda$-small object.
\end{prop}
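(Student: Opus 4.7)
The plan is to verify the two standard conditions for local finite presentability: exhibit a small set of finitely presentable generators (the finite theories) and check that every object is a filtered colimit of them. More generally, for every regular cardinal $\lambda$, I would show that (a) every $\lambda$-small theory in $B/\PSt_{\mathcal{S}_0}$ is $\lambda$-presentable, and (b) every theory is a $\lambda$-filtered colimit of its $\lambda$-small subtheories. Local finite presentability then follows by taking $\lambda = \aleph_0$, and the characterization of $\lambda$-presentable objects follows from (a), (b), and a short retract argument.

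For (a), fix a $\lambda$-small theory $T$ and a $\lambda$-filtered diagram $(T_i)_{i \in I}$. A morphism $T \to \colim_i T_i$ is specified by the images of the fewer than $\lambda$ basic function and predicate symbols of $T$, subject to the fewer than $\lambda$ basic axioms of $T$ being derivable in the codomain. Each such image is a term or formula built from finitely many symbols, so by the explicit description of colimits above it factors through some $T_i$; because $I$ is $\lambda$-filtered and there are fewer than $\lambda$ such choices to make, a single index $i_0$ suffices. Each translated axiom is then derivable in $\colim_i T_i$ via a finite derivation that therefore lives in some $T_{j}$ with $j \geq i_0$, and filteredness again consolidates these to a common index $i_1 \geq i_0$. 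This yields the required factorization $T \to T_{i_1}$, and uniqueness up to further refinement in the diagram follows similarly.

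For (b), every theory $T$ is the directed union of the subtheories generated by $\lambda$-small subsets of its basic function symbols, predicate symbols, and axioms: the poset of such subsets is $\lambda$-directed and $T$ is their colimit. Combined with (a), this shows that the $\lambda$-small theories form a set (up to isomorphism) of $\lambda$-presentable generators, so by Gabriel--Ulmer the category $B/\PSt_{\mathcal{S}_0}$ is locally $\lambda$-presentable; specializing to $\lambda = \aleph_0$ gives the first claim.

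For the characterization, one direction is (a). Conversely, if $T$ is $\lambda$-presentable then, writing $T = \colim_\alpha T_\alpha$ via (b) with each $T_\alpha$ $\lambda$-small, the identity on $T$ factors through some $T_\alpha$, so $T$ is a retract of $T_\alpha$ via maps $i : T \to T_\alpha$ and $r : T_\alpha \to T$ with $r \circ i = \mathrm{id}_T$. The main obstacle is this final step: showing that such a retract is already isomorphic to a $\lambda$-small theory. My plan is to adjoin to $T_\alpha$, for each basic function and predicate symbol $\sigma$ of $T_\alpha$, the axiom identifying $\sigma$ with $i(r(\sigma))$, producing a quotient $T'$ of $T_\alpha$ which is again $\lambda$-small because only $\lambda$-many axioms are added. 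Using the explicit description of coequalizers above, one checks that $r$ factors through the quotient map $T_\alpha \to T'$ via some $r' : T' \to T$, and that $r'$ is inverse to the composite $T \xrightarrow{i} T_\alpha \to T'$, exhibiting $T \cong T'$ as desired.
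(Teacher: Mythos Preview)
Your proposal is correct and follows essentially the same route as the paper: both first show that $\lambda$-small theories are $\lambda$-presentable by factoring finitely many symbols and derivations through a single stage of a $\lambda$-filtered diagram, then exhibit every theory as a $\lambda$-directed colimit of its $\lambda$-small subtheories, and finally handle the converse characterization via the retract-through-a-stage argument. Your final step, adjoining to $T_\alpha$ the axioms identifying each $\sigma$ with $i(r(\sigma))$, is exactly the explicit coequalizer of $i \circ r$ and $\mathrm{id}_{T_\alpha}$ described just before the proposition; the paper simply phrases this as ``$T$ is a coequalizer of $f \circ g$ and $\mathrm{id}_{T'}$, hence isomorphic to the explicitly constructed one,'' while you verify the isomorphism by hand, but the content is identical.
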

\begin{proof}
First, let us prove that every $\lambda$-small object is $\lambda$-presentable.
Let $\colim_{i \in I} T_i$ be a directed colimit of theories in $B/\PSt_{\mathcal{S}_0}$.
Every term and every formula of a theory is constructed from a finite number of function and predicate symbols.
Thus for every formula of $\colim_{i \in I} T_i$ there exists a theory $T_i$ such that this formula belongs to $T_i$.
The same is true for terms and restricted terms.

Every derivation of a theorem $\varphi \sststile{}{V} \psi$ is constructed from a finite number of function symbols, predicate symbols and axioms.
Thus for every theorem $\varphi \sststile{}{V} \psi$ of $\colim_{i \in I} T_i$ there exists a theory $T_i$ such that $\varphi \sststile{}{V} \psi$ is a theorem of $T_i$.
Note that the additional axioms of $\colim_{i \in I} T_i$ that was added for every $f : T_i \to T_j$ are always true in $T_j$.

Let $h : T \to \colim_{i \in I} T_i$ be a morphism from a $\lambda$-small theory $T$ to a $\lambda$-directed colimit of theories $\{ T_i \}_{i \in I}$.
Since $T$ is $\lambda$-small, there exists a theory $T_i$ such that for every function symbol $\sigma$, predicate symbol $R$ and axiom $\varphi \sststile{}{V} \psi$ of $T$,
restricted terms $h(\sigma(x_1, \ldots x_n))$ and formulae $h(R(x_1, \ldots x_n))$ belong to $T_i$, and $h(\varphi) \sststile{}{V} h(\psi)$ is a theorem of $T$.
Thus $h$ factors through $T_i$.

Let $h_1,h_2 : T \to T_i$ be morphisms such that $g_i \circ h_1 = g_i \circ h_2$, where $g_i : T_i \to \colim_{i \in I} T_i$.
Then for every function symbol $\sigma$ of $T$, sequent
\[ \sststile{}{x_1, \ldots x_n} h_1(\sigma(x_1, \ldots x_n)) \cong h_2(\sigma(x_1, \ldots x_n)) \]
is a theorem of $\colim_{i \in I} T_i$.
But we already know that there exists a theory $T_j$ such that $i \leq j$ and this sequent is a theorem of $T_j$.
The same is true for every predicate symbol of $T$.
It follows that $f \circ h_1 = f \circ h_2$, where $f : T_i \to T_j$.

Now, let us prove that $B/\PSt_{\mathcal{S}_0}$ is locally finitely presentable.
We only need to show that every theory in $B/\PSt_{\mathcal{S}_0}$ is a $\lambda$-directed colimit of its $\lambda$-small subtheories.
Let $T$ be a theory, and let $\{ f_i : T_i \to T' \}_{i \in I}$ be a cocone over the diagram of $\lambda$-small subtheories of $T$.
For every basic function or predicate symbol $p$ of $T$,
there is a finite subtheory $T_p$ of $T$ which contains symbols and axioms of $B$ and one additional symbol $p$ and no other axiom.
A morphism $h$ of cocones $T$ and $T'$ must commute with morphisms from $T_p$.
Thus it must be defined as $h(p(x_1, \ldots x_n)) = f_p(p(x_1, \ldots x_n))$; hence it is unique.
To prove that this defines a morphism, we need to show that $h$ preserves axioms of $T$.
But every axiom involves only a finite number of symbols of $T$.
Hence there exists a subtheory $T_i$ of $T$ which consists of these symbols and this axiom.
Since $f_i$ is a morphism of theories, this axiom also holds in $T'$.

Finally, let us prove that every $\lambda$-presentable theory $T$ in $B/\PSt_{\mathcal{S}_0}$ is isomorphic to a $\lambda$-small theory.
Consider the identity map $id_T : T \to T$.
Since $T$ is a $\lambda$-directed colimit of its $\lambda$-small subtheories, $id_T$ factors through some $\lambda$-small subtheory $T'$ of $T$.
Thus we have maps $f : T \to T'$ and $g : T' \to T$ such that $g \circ f = id_T$.
Since $T$ is a coequalizer of $f \circ g$ and $id_{T'}$, it is isomorphic to the coequalizer of $f \circ g$ and $id_{T'}$ as constructed above, which is a $\lambda$-small theory.
\end{proof}

\begin{cor}
The categories of stable and $c$-stable theories and categories of (stable, $c$-stable) algebraic dependent type theories are all locally finitely presentable.
\end{cor}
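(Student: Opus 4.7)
The plan is to realize each of the four categories as a slice $B/\PSt_{\mathcal{S}_0}$ for an appropriate base $B$, or as a full reflective subcategory of such a slice that is closed under directed colimits, and then invoke \rprop{theories-presentable} together with the standard fact that local finite presentability descends to reflective subcategories closed under directed colimits.

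First I would handle algebraic dependent type theories by identifying a fixed prestable base theory $B_{\mathrm{ADT}}$ whose basic symbols and axioms encode the structural apparatus of a dependent type theory (the sorts $\ctx$ and $\tm$ indexed by $\mathbb{N}$, the $\ft$ and $\ty$ operations, the substitution and weakening symbols, and the coherence axioms between them). By construction an algebraic dependent type theory is nothing other than a prestable theory under $B_{\mathrm{ADT}}$, so the category of such theories is $B_{\mathrm{ADT}}/\PSt_{\mathcal{S}_0}$, and \rprop{theories-presentable} gives local finite presentability immediately.

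Next I would treat stable and $c$-stable theories. A prestable theory $T$ is stable precisely when the structural map $\alpha : L(T) \to T$ is an isomorphism, where $L$ is the functor from \cite{alg-tt}. Since $L$ is constructed by adjoining a bounded number of symbols and axioms per input datum, it preserves directed colimits; consequently the condition that $\alpha$ is an isomorphism, being detectable symbol-by-symbol and axiom-by-axiom, is preserved by directed colimits too. Hence the full subcategory of stable theories is closed under directed colimits in $\PSt_{\mathcal{S}_0}$. The reflection into stable theories is obtained by iteratively adjoining the equations that force $\alpha$ to become an isomorphism; this operation preserves $\lambda$-smallness of theories, so the subcategory is reflective and inherits local finite presentability. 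The same argument adapted to the contextual variant handles $c$-stability, and combining reflectivity with slicing under $B_{\mathrm{ADT}}$ gives the two algebraic cases.

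The main obstacle will be the verification that the stabilization reflector preserves $\lambda$-presentability, i.e.\ that it carries $\lambda$-small prestable theories to $\lambda$-small stable theories. This requires a careful audit of the iterative construction that forces $\alpha$ to be an isomorphism, showing that at each step the number of freshly introduced symbols and axioms is bounded by the data already present and that the transfinite iteration stabilizes at a bounded ordinal for $\lambda$-small inputs. Once this finitarity statement is in place, the corollary follows formally from \rprop{theories-presentable}.
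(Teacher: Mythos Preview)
Your approach is workable but takes a considerably longer route than the paper. The paper's proof is essentially one sentence: each of the four categories is a full reflective subcategory of the category of prestable theories that is closed under \emph{all} colimits (a fact imported from \cite{alg-tt}), and a reflective subcategory of a locally finitely presentable category closed under filtered colimits is again locally finitely presentable. Closure under all colimits is strictly stronger than the directed-colimit closure you set out to verify, and once one has it there is no need to analyze the stabilization reflector at all, since closure of the subcategory under filtered colimits is equivalent to finitarity of the reflector. Your ``main obstacle''---auditing the transfinite stabilization to show it preserves $\lambda$-smallness---is therefore work that the paper's argument avoids entirely.

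Your first step, realizing algebraic dependent type theories as a slice $B_{\mathrm{ADT}}/\PSt_{\mathcal{S}_0}$, is also a slight misreading of the statement: the corollary is about \emph{stable} and \emph{$c$-stable} algebraic dependent type theories, not unqualified ones, so a bare slice description would not finish any of the four cases on its own. You do recover from this by ``combining reflectivity with slicing,'' but the paper treats all four categories uniformly as reflective subcategories of prestable theories rather than splitting into a slice step and a stability step. What your decomposition buys, if carried out, is a more self-contained argument that does not rely on the colimit-closure results of \cite{alg-tt}; what the paper's approach buys is brevity and uniformity.
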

\begin{proof}
Each of this categories is a full reflective subcategory of the category of prestable theories closed under all colimits.
It follows from the previous proposition that they are locally finitely presentable.
\end{proof}

\subsection{Model structure}

Let $T_I = \coeT^{l'}_2 + \PathT + \wUA$ be the theory defined in \cite{alg-models}.
In this subsection we define a model structure on the category $T_I/\algtt$ of algebraic dependent type theories under $T_I$.

To construct this model structure, we need to recall a few definitions from \cite{f-model-structures}.
A reflexive cylinder object $C_U(V)$ for a map $i : U \to V$ is any factorization of $[id_V,id_V] : V \amalg_U V \to V$.
Maps $f,g : V \to X$ are homotopic relative to a cylinder object $[\cyli_0,\cyli_1] : V \amalg_U V \to C_U(V)$, if there exists a map $h : C_U(V) \to X$
such that $h \circ \cyli_0 = f$ and $h \circ \cyli_1 = g$.
In this case we will write $f \sim_i g$.
We say that a map $f : X \to Y$ has RLP up to $\sim_i$ with respect to $i : U \to V$ if for every commutative square of the form
\[ \xymatrix{ U \ar[r]^u \ar@{}[dr]|(.7){\sim_i} \ar[d]_i & X \ar[d]^f \\
              V \ar[r]_v \ar@{-->}[ur]^g                  & Y,
            } \]
there is a dotted arrow $g : V \to X$ such that $g \circ i = u$ and $(f \circ g) \sim_i v$.
We will say that a map has RLP up to relative homotopy with respect to a set $\I$ of maps if it has RLP up to $\sim_i$ with respect to every $i \in \I$.

We will also need the following theorem from \cite{f-model-structures}:
\begin{thm}[model-structure]
Let $\C$ be a complete and cocomplete category, and let $\I$ be a set of maps of $\C$
such that the domains and the codomains of maps in $\I$ are small relative to $\Icell$.
For every $i : U \to V \in \I$, choose a reflexive relative cylinder object $C_U(V)$
such that $[\cyli_0,\cyli_1] : V \amalg_U V \to C_U(V) \in \Icof$.
Let $\J_\I = \{\ \cyli_0 : V \to C_U(V)\ |\ i : U \to V \in \I \ \}$, and
let $\we_\I$ be the set of maps which have RLP up to relative homotopy with respect to $\I$.

Suppose that for all composable $f \in \Jcell[_\I] \cup \we_\I$ and $g$, if $g \circ f \in \we_\I$, then $g \in \we_\I$.
Then there exists a cofibrantly generated model structure on $\C$ with $\I$ as a set of generating cofibrations,
$\J_\I$ as a set of generating trivial cofibrations, and $\we_\I$ as a class of weak equivalences.
\end{thm}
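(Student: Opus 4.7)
The plan is to verify the usual cofibrantly generated model category axioms with cofibrations $\Icof$, fibrations $\Jinj$, and weak equivalences $\we_\I$; the trivial fibrations will turn out to be $\Iinj$ and the trivial cofibrations $\Jcof$. I would begin with three preliminary facts. First, $\Jcell \subseteq \Icof$, because each $\cyli_0 : V \to C_U(V)$ factors as the pushout $V \to V \amalg_U V$ of $i \in \I$ followed by the map $V \amalg_U V \to C_U(V) \in \Icof$. Second, $\Iinj \subseteq \we_\I$: a strict lift, paired with the constant homotopy supplied by the common section of $\cyli_0$ and $\cyli_1$ in the reflexive cylinder, already yields a lift up to $\sim_i$. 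Third, each generator $\cyli_0 \in \J_\I$ is itself in $\we_\I$, with the identity on $C_U(V)$ as lift and the cylinder itself as the witnessing homotopy between its two legs; since $\we_\I$ is defined by a lifting property it is closed under retract, pushout, and transfinite composition, so this extends to $\Jcof \subseteq \we_\I$.

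The smallness hypothesis then makes the small object argument applicable to both $\I$ and $\J_\I$, producing the two required factorizations; by the preliminary facts, the $(\Jcell, \Jinj)$-factorization is a (trivial cofibration, fibration) factorization. The two lifting axioms follow directly from the definitions of $\Icof$ and $\Jcof$, and retract closure of cofibrations, fibrations, and weak equivalences is automatic from their definitions via lifting properties.

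The main obstacle is 2-out-of-3 for $\we_\I$, since the hypothesis of the theorem only supplies one case of right cancellation. My strategy would be to interleave the proof of 2-out-of-3 with that of the identification $\Jinj \cap \we_\I = \Iinj$. Using the $(\Jcell, \Jinj)$-factorization together with the given cancellation, one first shows that the $\Jinj$-part of any $\we_\I$-map is itself in $\we_\I$; this reduces the outstanding cases of 2-out-of-3 to statements about $\Jinj \cap \we_\I$. One then proves $\Jinj \cap \we_\I = \Iinj$ by a retract argument: given $q \in \Jinj \cap \we_\I$, factor it as $p \circ j$ with $j \in \Icell$ and $p \in \Iinj$, and use the lifting-up-to-homotopy of $q$ against $j \in \Icof$, combined with a mapping-cylinder trick in which the cylinders' reflexivity is used to concatenate and invert the resulting homotopies, to exhibit $q$ as a retract of $p$. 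The reflexivity of the chosen cylinders is essential throughout, and I expect this interleaved bootstrap of 2-out-of-3 with the characterization $\Jinj \cap \we_\I = \Iinj$ to be the technical heart of the argument.
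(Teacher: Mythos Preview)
The paper does not prove this theorem; it is quoted from \cite{f-model-structures} and used as a black box, so there is no in-paper proof to compare against. Evaluating your outline on its own merits, there is a genuine gap.

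You assert that ``since $\we_\I$ is defined by a lifting property it is closed under retract, pushout, and transfinite composition,'' and deduce $\Jcof\subseteq\we_\I$ from $\J_\I\subseteq\we_\I$. But $\we_\I$ is defined by a \emph{right} lifting property (up to homotopy), and such classes are stable under retracts and pullbacks, not under pushouts. For a concrete failure of the same shape: weak homotopy equivalences of spaces are exactly the maps with RLP-up-to-homotopy against the inclusions $S^{n-1}\hookrightarrow D^n$, yet the pushout of the weak equivalence $[0,1]\to\ast$ along the arc inclusion $[0,1]\hookrightarrow S^1$ is $S^1\to\ast$, which is not one. So your route to $\Jcof\subseteq\we_\I$ collapses. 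Your verification that each generator $\cyli_0$ lies in $\we_\I$ is also not right as written: to test $\cyli_0:V\to C_U(V)$ against some $i':U'\to V'\in\I$, the lift must be a map $V'\to V$, not ``the identity on $C_U(V)$''; the natural candidate $s\circ v$ (with $s$ the cylinder retraction) only works if $\cyli_0\circ s$ is homotopic to $\mathrm{id}_{C_U(V)}$, a deformation-retract condition that is not part of the hypotheses.

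This is precisely why the theorem carries the extra cancellation hypothesis for $f\in\Jcell$: it is the substitute for the (false) pushout-stability of $\we_\I$. The identity $\Jinj\cap\we_\I=\Iinj$ that you aim for can be proved directly --- given a lift-up-to-homotopy, lift the witnessing homotopy $C_U(V)\to Y$ along the $\Jinj$-map using $\cyli_0\in\J_\I$, and evaluate at $\cyli_1$ to obtain a strict lift --- and it is this identity together with the stated hypothesis (not pushout closure) that carries the remaining verifications. Your ``interleaved bootstrap'' for 2-out-of-3 is in the right spirit, but it must be run through the hypothesis rather than through closure properties $\we_\I$ does not have.
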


For every sequence $(p_1,n_1), \ldots (p_{k+1},n_{k+1})$ of sorts, let $T_{(p_1,n_1), \ldots (p_{k+1},n_{k+1})}$ be the theory
with function symbols $\sigma_i : (p_1,n_1) \times \ldots \times (p_{i-1},n_{i-1}) \to (d_{p_i},n_i)$ for every $1 \leq i \leq k$,
$\sigma_{k+1} : (p_1,n_1) \times \ldots \times (p_k,n_k) \to (p_{k+1},n_{k+1})$,
and axioms $\varphi_1 \land \ldots \land \varphi_i \sststile{}{x_1, \ldots x_i} \sigma_{i+1}(x_1, \ldots x_i)\!\downarrow$ for every $1 \leq i \leq k$,
where $\varphi_j$ equals to $e_{p_j}(x_j) = \sigma_j(x_1, \ldots x_{j-1})$.
Let $\I$ be the set of maps of the form $T_{l, (d_p,n)} \to T_{l, (p,n)}$, where $l = s_1, \ldots s_k$ is any sequence of sorts,
$\sigma_i$ maps to $\sigma_i$ for every $1 \leq i \leq k$, and $\sigma_{k+1}$ maps to $e_p(\sigma_{k+1})$.
Let $\I_0 \subseteq \I$ be the subset which consists of the maps $T_{l, (d_p,n)} \to T_{l, (p,n)}$ such that $l$ is empty.

For every map in $\I$, we need to define a relative cylinder object for it.
Let $C_{T_{l,(\ty,n)}}(T_{l,(\tm,n)})$ be the theory with the same symbols and axioms as $T_l$,
three additional function symbol $\sigma, \sigma', h : s_1 \times \ldots \times s_k \to (\tm,n)$,
and axioms making $h$ into a relative homotopy between $\sigma$ and $\sigma'$ with respect to $\varphi_1 \land \ldots \land \varphi_k$.
Analogously, we define $C_{T_{l,(\ctx,n)}}(T_{l,(\ty,n)})$ to be the theory with the same symbols and axioms as $T_l$,
seven additional function symbols $\sigma,\sigma' : s_1 \times \ldots \times s_k \to (\ty,n)$, $f,g,g',p,q : s_1 \times \ldots \times s_k \to (\tm,n+1)$,
and axioms making $(f,g,g',p,q)$ into a relative homotopy between $\sigma$ and $\sigma'$ with respect to $\varphi_1 \land \ldots \land \varphi_k$.
Maps $\cyli_0,\cyli_1 : T_{l,(\tm,n)} \to C_{T_{l,(\ty,n)}}(T_{l,(\tm,n)})$ and their retraction
$s : C_{T_{l,(\ty,n)}}(T_{l,(\tm,n)}) \to T_{l,(\tm,n)}$ are defined in the obvious way.

\begin{remark}[triv-fib-lift]
By \rprop{morita-char}, a map has RLP up to relative homotopy with respect to $\I$ if and only if it is a Morita equivalence.
Similarly, \rprop{syn-equiv-char} implies that a map has RLP up to relative homotopy with respect to $\I_0$ if and only if it is a syntactic equivalence.
\end{remark}

\begin{lem}[jcell-morita]
Let $f : X \to Y$ be a pushout of $\cyli_0 : T_{l,(\tm,n)} \to C_{T_{l,(\ty,n)}}(T_{l,(\tm,n)})$ (in the category of $I$-stable theories under $T_I$)
and let $g : Y \to X$ be the retraction of $f$ which is the pushout of $s$.
Let $\varphi$ be a formula of $X$ such that for every predicate symbol $R$ occurring in $\varphi$,
sequent $R(x_1, \ldots x_k) \sststile{}{x_1, \ldots x_k} \alpha(L(R))(I, I \times x_1, \ldots I \times x_k)$ is derivable in $X$.

Then for every term $t$ of $Y$ such that $f(\varphi) \sststile{}{V} t\!\downarrow$, terms $t$ and $f(g(t))$ are relatively homotopic with respect to $f(\varphi)$.
\end{lem}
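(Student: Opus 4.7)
The plan is a structural induction on the term $t$, using the natural-deduction presentation to keep the induction clean: I will prove that for every $t$ of $Y$ with $f(\varphi) \sststile{}{V} t\!\downarrow$, there is a relative (heterogeneous) homotopy from $t$ to $f(g(t))$ with respect to $f(\varphi)$. Here $Y$ adds to $X$ the symbols $\sigma'$ and $h$ with $h$ witnessing $\sigma \sim \sigma'$, and $g$ sends $\sigma' \mapsto \sigma$, $h \mapsto \refl(\sigma)$. Since $T_I = \coeT^{l'}_2 + \PathT + \wUA$, the whole ambient machinery of paths, transport and weak univalence is available, and I will exploit it to interpret each inductively given homotopy as a genuine path over a fresh interval variable $i : I$, apply symbols pointwise along these paths, and then evaluate at the endpoints.

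For the base case of a variable, $f(g(x)) = x$ and $\refl(x)$ does the job. For $t = \tau(t_1, \ldots, t_m)$ with homotopies $h_i : t_i \sim f(g(t_i))$ given by induction, split on $\tau$. If $\tau$ comes from $X$ via $f$, then $f(g(\tau(\vec t))) = \tau(f(g(\vec t)))$, and assembling the $h_i$ into paths over $i : I$ and pushing $\tau$ along them, using $\PathT$ to encode/decode paths and $\wUA$ together with $\coeT^{l'}_2$ to transport across the path of types, produces the required homotopy. The hypothesis on predicate symbols in $\varphi$ enters here: when one invokes axiom instances of $X$ (rule \axref{na}) in the interval-expanded derivation, the premises of the axiom must remain derivable after thickening the arguments by $I$, and this is exactly guaranteed by the derivability of $R(\vec x) \sststile{}{\vec x} \alpha(L(R))(I, I \times \vec x)$ for each predicate symbol $R$ occurring in $\varphi$. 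For $\tau = \sigma'$, one composes (through $\PathT$ and $\coeT^{l'}_2$) the just-constructed homotopy $\sigma(\vec t) \sim \sigma(f(g(\vec t)))$ from the previous case with the axiomatic homotopy $h(\vec t) : \sigma(\vec t) \sim \sigma'(\vec t)$ to obtain $\sigma'(\vec t) \sim \sigma(f(g(\vec t))) = f(g(\sigma'(\vec t)))$. For $\tau = h$ one must construct a heterogeneous homotopy, since $h(\vec t)$ has type $\Id(\sigma(\vec t), \sigma'(\vec t))$ while $f(g(h(\vec t))) = \refl(\sigma(f(g(\vec t))))$ has type $\Id(\sigma(f(g(\vec t))), \sigma(f(g(\vec t))))$; the homotopy between these two $\Id$-types comes from the cases already handled for $\sigma$ and $\sigma'$, and on top of it a standard square-filling in $\PathT$ (built from $h$ itself and $\refl$) yields the heterogeneous witness.

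The main obstacle is this last case: matching $h(\vec t)$ to $\refl(\sigma(f(g(\vec t))))$ across types that are only propositionally equal forces a genuine heterogeneous comparison, and it is here that merely having identity types with $J$ would not suffice; one really needs $\wUA$ to equate the two $\Id$-types up to a path and $\PathT$ to fill the square. A secondary — and entirely technical — difficulty is bookkeeping the interval-expansion uniformly across the axiom-instance rule \axref{na} in the ``old function symbol'' case, which is exactly the obstruction that the predicate-symbol hypothesis on $\varphi$ is designed to remove; stated any more weakly, one would be unable to carry $\varphi$ into the interval-thickened derivation needed for the pointwise application of $\tau$.
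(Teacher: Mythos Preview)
Your approach is correct in outline, but the paper takes a different and more uniform route. Instead of inducting on the term $t$ and handling each function symbol of $Y$ by cases, the paper invokes (an analogue of) \cite[Lemma~3.7]{alg-models}: there is a single ``interval-thickening'' map $H : \Term_Y(V)_{(p,n)} \to \Term_Y(L(V))_{(p,n+1)}$ defined on all terms at once, together with a one-shot proof (by induction on \emph{derivations}, not on terms) that $H$ preserves provability, i.e.\ if $\chi \sststile{}{V} \psi$ holds in $Y$ then $H(\chi) \land \bigwedge_{x \in L(V)} \ctx^n(x) = I \sststile{}{L(V)} H(\psi)$ holds. The predicate-symbol hypothesis on $\varphi$ is used exactly once, to show $f(\varphi) \sststile{}{V} H(f(\varphi))[\rho]$ where $\rho(x) = I \times x$; together with theorem-preservation this yields $f(\varphi) \sststile{}{V} H(t)[\rho]\!\downarrow$, and $H(t)[\rho]$ is then directly the desired homotopy, since by construction $H(t)[\rho][\leftI] = f(g(t))$ and $H(t)[\rho][\rightI] = t$.

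Morally the two arguments coincide --- your ``assembling the $h_i$ into paths over $i:I$ and pushing $\tau$ along them'' is precisely what $H$ does --- but the paper's packaging avoids any separate treatment of the new cylinder symbols $\sigma'$ and $h$: those are handled automatically by the $I$-stable structure $\alpha : L(Y) \to Y$ of the pushout, so the endpoint identities hold uniformly rather than via an ad hoc composition or square. Your case $\tau = h$ is the one place your sketch is genuinely thin: the heterogeneous square relating $h(\vec t)$ to $\refl(\sigma(f(g(\vec t))))$ can indeed be built in $\PathT$, but calling it ``standard square-filling'' understates the work, and in the paper's formulation this case simply does not arise as a separate step.
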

\begin{proof}
This lemma is analogous to \cite[Lemma~3.7]{alg-models}.
We defined there a function $h : \Term_Y(V)_{(p,n)} \to \Term_Y(L(V))_{(p,n+1)}$ such that $h$ preserves theorems in the sense that
if $\chi \sststile{}{V} \psi$ is a theorem of $Y$, then $h(\chi) \land \bigwedge_{x \in L(V)} \ctx^n(x) = I \sststile{}{L(V)} h(\psi)$ is also a theorem.
Note that $h(f(\varphi)) = \alpha(L(f(\varphi)))$ since $f(\varphi)$ contains only symbols of $X$.
The condition we put on $\varphi$ implies that sequent $f(\varphi) \sststile{}{V} h(f(\varphi))[\rho]$ is derivable, where $\rho(x) = I \times x$.
Thus we have the following theorem: $f(\varphi) \sststile{}{V} h(t)[\rho]\!\downarrow$.

Moreover, we have theorems $h(t)[\rho]\!\downarrow\ \sststile{}{V} h(t)[\rho][left] = f(g(t))$ and $h(t)[\rho]\!\downarrow\ \sststile{}{V} h(t)[\rho][right] = t$
(here, $[\rho]$ is an operation of substitution on terms and $[left]$ and $[right]$ are derived function symbols in the theory; we are sorry for this clash of the notation).
Thus $h(t)[\rho]$ gives us the required homotopy between $t$ and $f(g(t))$.
\end{proof}

\begin{thm}[theories-model-structure]
There exists a model structure on the category of $I$-stable algebraic dependent type theories under $T_I$
with $\I$ as the set of generating cofibrations, Morita equivalences as weak equivalences, and in which all objects are fibrant.
We call it \emph{the Morita model structure}.
\end{thm}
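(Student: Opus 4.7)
The plan is to apply \rthm{model-structure} to the set $\I$ of generating cofibrations, with the cylinder objects $C_{T_{l,(\ty,n)}}(T_{l,(\tm,n)})$ (and their type-sort analogues) constructed above. Most of the hypotheses are immediate. By the corollary to \rprop{theories-presentable}, the category of $I$-stable algebraic dependent type theories under $T_I$ is locally finitely presentable, so it is complete and cocomplete and every finite theory, in particular the domain and codomain of each map in $\I$, is small relative to $\Icell$. The cylinder objects are reflexive by construction, and $[\cyli_0,\cyli_1]$ belongs to $\Icof$ because each cylinder is obtained from $T_{l,(\tm,n)} \amalg_{T_{l,(\ty,n)}} T_{l,(\tm,n)}$ by freely adjoining the homotopy datum $h$ (respectively the tuple $(f,g,g',p,q)$) subject to its typing constraints, and each such adjunction is a finite iteration of pushouts of maps in $\I$. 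By \rremark{triv-fib-lift}, the class $\we_\I$ is precisely the class of Morita equivalences.

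The genuinely nontrivial hypothesis is that whenever $f \in \Jcell \cup \we_\I$ is composable with $g$ and $g \circ f \in \we_\I$, then $g \in \we_\I$. I would split this into two claims: (a) every map in $\Jcell$ is a Morita equivalence, and (b) Morita equivalences satisfy the 2-out-of-3 property. For (a), let $f : X \to Y$ be a pushout of some $\cyli_0$, and let $g : Y \to X$ be the retraction obtained by pushing out $s$, so that $g \circ f = \mathrm{id}_X$. Then \rlem{jcell-morita} produces, for every term $t$ of $Y$ defined under a suitable formula $\varphi$, a relative homotopy between $t$ and $f(g(t))$; taking $g(a)$ as the lift of any given term $a$ over $f(A)$, \rprop{morita-char} will yield that $f$ is a Morita equivalence, provided the side condition on predicates in $\varphi$ is verified for the formulas appearing in $P_M$. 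Since those formulas are conjunctions of equalities and downarrows, whose $\alpha \circ L$-images are controlled by the $I$-stability of $X$, the side condition holds. For (b), the discussion preceding \rlem{jcell-morita} records that every theory under $T_I$ is a model theory; hence by \rprop{morita-quillen} a morphism between such theories is a Morita equivalence if and only if the induced adjunction is a Quillen equivalence, and 2-out-of-3 for Morita equivalences is inherited from 2-out-of-3 for Quillen equivalences.

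Finally, every object is fibrant because $\cyli_0$ admits the retraction $s$, and retractions are stable under pushout, so every map in $\Jcell$ has a retraction; composing any such retraction with the top map of a lifting square whose target is the terminal object solves the lift. The main obstacle I anticipate is part (a): matching the predicate side condition of \rlem{jcell-morita} with the $I$-stability data for the specific formulas of $P_M$ is the most delicate point, and it is in fact the only place where the hypothesis that we work with $I$-stable theories under $T_I$ is actually used.
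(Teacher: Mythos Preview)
Your proposal follows essentially the same route as the paper: invoke \rthm{model-structure}, identify $\we_\I$ with Morita equivalences via \rremark{triv-fib-lift}, obtain 2-out-of-3 from \rprop{morita-quillen}, and verify that pushouts of $\cyli_0$ are Morita equivalences using the retraction $g$ together with \rlem{jcell-morita}.

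Two small points deserve tightening. First, in step~(a) you only argue that a single pushout of $\cyli_0$ is a Morita equivalence, whereas you claimed that every map in $\Jcell$ is one. The passage from pushouts to arbitrary $\Jcell$-maps needs closure of Morita equivalences under transfinite composition; the paper notes this explicitly, using that the codomains of maps in $\I$ are finite (so the weak lifting property, being tested against finitely presentable objects, is preserved under filtered colimits). Second, your treatment of the predicate side condition in \rlem{jcell-morita} is more elaborate than necessary and slightly misdirected: the formulas in $P_M$ are conjunctions of equalities $e_{p_i}(x_i)=t_i$ and contain no predicate symbols at all, so the hypothesis on $R$ is vacuous. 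There is no need to appeal to $I$-stability here; the paper simply takes $\varphi$ to be predicate-free. The $I$-stability and the choice of $T_I$ enter rather through \rlem{jcell-morita} itself (the construction of $h$ uses the interval) and through \rprop{morita-quillen} (which needs the model structure on $\Mod{T}$).
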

\begin{proof}
Note that the set $\we_\I$ consists of Morita equivalences.
Since Quillen equivalences satisfy the 2-out-of-3 property, by \rprop{morita-quillen}, Morita equivalences between theories under $T_I$ also satisfy it.
Since the codomains of maps in $\I$ are finite, Morita equivalences are closed under transfinite compositions.
Thus by \rthm{model-structure}, we just need to prove that pushouts of maps $\cyli_0 : T_{l,(p,n)} \to C_{T_{l,(d_p,n)}}(T_{l,(p,n)})$ are Morita equivalences.
Let $f : X \to Y$ be a pushout of $\cyli_0$ and let $g : Y \to X$ be its retract.
Let $\varphi$ be a formula of $X$ which does not contain any predicate symbols and let $A$ be a term of $X$ such that $\varphi \sststile{}{V} A\!\downarrow$.
Let $a$ be a term of $Y$ such that $f(\varphi) \sststile{}{V} e_p(a) = f(A)$.
If we define $a'$ as $g(a)$, then $\varphi \sststile{}{V} e_p(a') = A$ and the fact that $f(a')$ and $a$ are relatively homotopic follows from \rlem{jcell-morita}.
\end{proof}

\Rlem{jcell-morita} implies that trivial cofibrations satisfying a mild additional condition are strict Morita equivalences:

\begin{prop}
Let $f : T_1 \to T_2$ be a trivial cofibration such that for every predicate symbol $R$ of $T_1$,
sequent $R(x_1, \ldots x_k) \sststile{}{x_1, \ldots x_k} \alpha(L(R))(I, I \times x_1, \ldots I \times x_k)$ is derivable.
Then $f$ is a strict Morita equivalence.
\end{prop}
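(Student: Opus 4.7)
The strategy is to combine the cofibrant generation of the Morita model structure with \rlem{jcell-morita}, using the extra hypothesis on predicate symbols to remove the restriction there that $\varphi$ involve only ``good'' predicate symbols.

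Since $f$ is a trivial cofibration in a cofibrantly generated model structure with $\J_\I$ as generating trivial cofibrations, the small object argument factors $f$ as a relative $\J_\I$-cell complex followed by a fibration; by two-out-of-three that fibration is in fact trivial, and the lifting property for cofibrations against trivial fibrations produces a section exhibiting $f$ as a retract of a relative $\J_\I$-cell complex $f' : T_1 \to T'$. A direct diagram chase using the characterization from \rprop{str-morita-char} shows that strict Morita equivalences are closed under such retracts: given a lifting problem $(A, a, \varphi)$ for $f$, push $a$ forward along the section $T_2 \to T'$, solve the resulting lifting problem for $f'$ to obtain $a' \in \Term_{T_1}(V)_{(p,n)}$ together with a relative homotopy in $T'$, then apply the retraction $T' \to T_2$ to that homotopy. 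Thus it suffices to prove that $f'$ itself is a strict Morita equivalence.

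The key observation is that the maps in $\J_\I$ only adjoin function symbols, not predicate symbols. Consequently every intermediate theory $T^{(i)}$ in the $\J_\I$-cell construction of $f'$ shares its set of predicate symbols with $T_1$, so the hypothesis on predicate symbols of $T_1$ is inherited at every stage. I would then prove by transfinite induction on the cell structure $T_1 = T^{(0)} \to T^{(1)} \to \cdots$ that each map $T_1 \to T^{(i)}$ is strict Morita. For the successor step from $T^{(i)}$ to $T^{(i+1)}$, given a lifting problem with formula $\varphi$ (originating from $T_1$) and a term $a \in T^{(i+1)}$, I apply \rlem{jcell-morita} to $a$; unlike in the proof of \rthm{theories-model-structure}, no restriction on the predicates occurring in $\varphi$ is needed now, because the lemma's hypothesis is automatic for every formula over $T^{(i)}$ by the inherited predicate condition. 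This yields a relative homotopy in $T^{(i+1)}$ between $a$ and $u(r(a))$, where $u$ is the pushout map and $r$ its retraction; feeding $r(a) \in T^{(i)}$ into the inductive hypothesis produces a lift to $T_1$, and the two homotopies are concatenated using the identity-type machinery of $T_I$. For limit ordinals the argument is routine: any term, and any derivation of its typing, involves only finitely many symbols and axioms, so the lifting problem can be pushed back to a strictly earlier stage and solved by induction.

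The main obstacle will be the successor step, and specifically the concatenation of the relative homotopy produced by \rlem{jcell-morita} with the one coming from the inductive hypothesis; this is precisely where the assumption that all theories live under $T_I$ is essential, since transitivity of relative homotopy for both terms and types requires the identity-type structure provided by $T_I$. A secondary point to verify carefully is the retract step: one must check that a heterogeneous relative homotopy (as needed when $p = \ty$) in $T'$ transports along the retraction $T' \to T_2$ to a heterogeneous relative homotopy in $T_2$, but this follows immediately from functoriality once the definitions are unpacked.
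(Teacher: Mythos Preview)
Your proposal is correct and follows essentially the same route as the paper: reduce to a relative $\J_\I$-cell complex via closure of strict Morita equivalences under retracts, observe that $\J_\I$-maps adjoin no predicate symbols so the hypothesis propagates to every intermediate stage, and invoke \rlem{jcell-morita} at each pushout. The only difference is packaging: the paper cites closure of strict Morita equivalences under transfinite composition as a single black box, whereas you unfold this into an explicit transfinite induction with concatenation of homotopies at successor stages---so the ``main obstacle'' you anticipate is in fact already absorbed into a general closure property and need not be argued by hand.
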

\begin{proof}
Since trivial cofibrations are retracts of maps in $\Jcell[_\I]$ and strict Morita equivalences are closed under retracts,
we just need to prove that maps in $\Jcell[_\I]$ are strict Morita equivalences.
Since strict Morita equivalences are closed under transfinite compositions, we just need to prove this for maps $f$ which are pushouts of maps in $\Jcell[_\I]$.
Moreover, since maps in $\Jcell[_\I]$ do not change the set of predicate symbols,
we may assume that the domain and the codomain of $f$ satisfy the same condition on the predicate symbols as $T_1$.
Now, \rlem{jcell-morita} implies that such maps are strict Morita equivalences.
\end{proof}

Note that the domains and the codomains of maps in $\I$ do not have any predicate symbols.
Thus cofibrant objects also do not have them (to be precise, they are isomorphic to theories without predicate symbols).
So it seems rather pointless to have predicate symbols at this point.
We can consider the full subcategory $\algtt_f$ of $\algtt$ on theories without predicate symbols
(and without function symbols of the form $\sigma : s_1 \times \ldots \times s_k \to (\ctx,0)$).
\Rprop{theories-presentable} still holds for $\algtt_f$, so this category is locally finitely presentable.
There is a model structure on $T_I/\algtt_f$ in which the classes of cofibrations, fibrations, and weak equivalences
are the intersections of the corresponding classes in $T_I/\algtt$ with the class of morphisms of $\algtt_f$.
This model category has the same sets of generating cofibrations and generating trivial cofibrations as $\algtt$.

\begin{prop}
The inclusion functor $T_I/\algtt_f \to T_I/\algtt$ has a right adjoint and this adjunction is a Quillen equivalence.
\end{prop}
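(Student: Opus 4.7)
The plan is to exhibit a right adjoint $G : T_I/\algtt \to T_I/\algtt_f$ to the inclusion $i$ and then verify that the counit is a (strict) Morita equivalence, from which the Quillen equivalence follows automatically. I would define $G(T)$ to have as function symbols those of $T$ whose codomain is not of the form $(\ctx,0)$, no predicate symbols, and as axioms all sequents over this restricted signature that are derivable in $T$. The universal property $\mathrm{Hom}_{T_I/\algtt_f}(S, G(T)) \cong \mathrm{Hom}_{T_I/\algtt}(i(S), T)$ would then reduce to the observation that a morphism $i(S) \to T$ with $S \in \algtt_f$ sends each generator of $S$ to a term of $T$ over the restricted signature: since $S$ itself has no predicates or $(\ctx,0)$-valued generators, and $(\ctx,0)$-valued sub-terms can be normalized using the essential uniqueness of the empty context inherited from $T_I$, the image factors canonically through $G(T)$; conversely, any morphism into $G(T)$ includes into $T$.

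That $i \dashv G$ is a Quillen adjunction is built into the set-up: the paper declares the cofibrations, fibrations, and weak equivalences of $T_I/\algtt_f$ to be the intersections of the corresponding classes of $T_I/\algtt$ with morphisms of $\algtt_f$, so $i$ preserves and reflects all three classes. To upgrade to a Quillen equivalence, since $i$ reflects weak equivalences and every object of $T_I/\algtt$ is fibrant, the 2-out-of-3 property reduces the whole statement to showing that the counit $\varepsilon_T : i(G(T)) \to T$ is a Morita equivalence for every $T$.

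I would verify this via the syntactic characterization of \rprop{morita-char}: given a lifting problem $(V, \varphi, A, a) \in P_M$ with $a$ a term of $T$ and $A$ a term of $i(G(T))$, I need to produce a term $a' \in i(G(T))$ whose image is relatively homotopic to $a$ with respect to $f(\varphi)$. Terms never contain predicate symbols (these only appear in formulas), so the sole obstacle is the possible occurrence of $(\ctx,0)$-valued function symbols of $T$ inside sub-terms of $a$. The main point of the argument, and the real content, is showing that each such sub-term is provably equal in $T$ to a canonical expression over the restricted signature; I would derive this from the essential uniqueness of the empty context under $T_I$, which collapses all terms of sort $(\ctx,0)$ to a single canonical one. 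Substituting these replacements into $a$ yields the desired $a'$, with $\varepsilon_T(a') = a$ in $T$ (not merely homotopic), so the counit is in fact a strict Morita equivalence and the Quillen equivalence follows.
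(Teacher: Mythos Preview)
Your approach is essentially the paper's: define a right adjoint by dropping predicate symbols and $(\ctx,0)$-codomain function symbols, take as axioms all predicate-free theorems of $T$, and show the counit is an equivalence. The paper in fact observes that the counit is a trivial fibration, since $i(r'(T))$ and $T$ share the same terms and the same predicate-free theorems; your strict Morita equivalence via \rprop{morita-char} is the same observation in slightly different packaging. Your handling of $(\ctx,0)$-valued symbols through the essential uniqueness of the empty context is a point the paper leaves implicit.

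The one technical step you slide past is the construction of the adjunction itself. In this framework a morphism of theories assigns to each function symbol $\sigma$ of the source a \emph{restricted} term $t|_\varphi$ of the target, and the restricting formula $\varphi$ may contain predicate symbols of $T$ even when the source $S \in \algtt_f$ has none. So the claim that ``a morphism $i(S)\to T$ sends each generator of $S$ to a term of $T$ over the restricted signature'' is not automatic: the term $t$ is predicate-free, but $\varphi$ need not be, and there is no a priori reason $t|_\varphi$ is equivalent to a restricted term whose restriction is predicate-free. The paper handles exactly this by first passing to an isomorphic theory with \emph{enough function symbols} --- one function symbol for every restricted term --- so that any $t|_\varphi$ becomes equivalent to a single function symbol, which is tautologically in the restricted signature. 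Your argument can be patched the same way; as written, the factorization of $\mathrm{Hom}(i(S),T)$ through $G(T)$ is not justified.
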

\begin{proof}
We will say that a theory $T$ \emph{has enough function symbols} if for every restricted term $t$ of sort $s$ with free variables $x_1 : s_1$, \ldots $x_k : s_k$,
there is a function symbol $\sigma : s_1 \times \ldots \times s_k \to s$ such that sequent $\sststile{}{x_1, \ldots x_k} t \cong \sigma(x_1, \ldots x_k)$ is derivable.
Note that every theory $T$ is isomorphic to a theory $T'$ with enough function symbols.
Indeed, function symbols of $T'$ are just terms of the original theory and
axioms of $T'$ are axioms of $T$ together with axioms that say that the new terms are equivalent to the old ones.

Thus we may restrict and corestrict the inclusion functor $i : \algtt_f \to \algtt$ to the full subcategories of $\algtt_f$ and $\algtt$ on theories with enough function symbols.
We will denote this functor by $i' : \algtt'_f \to \algtt'$.
Now, it is easy to describe a right adjoint to $i'$.
For every theory $T \in \algtt'$, let $r'(T)$ be the theory with the same function symbols as $T$, no predicate symbols,
and with the set of axioms which consists of all theorems of $T$ which do not involve predicate symbols.
Then $r'$ is a functor $\algtt' \to \algtt'_f$.
It is easy to see that $r'$ is right adjoint to $i'$.
Since $i'(r'(T))$ and $T$ have the same sets of terms and theorems (which do not involve predicate symbols), the counit $\epsilon_T : i'(r'(T)) \to T$ is a trivial fibration.

Finally, note that the inclusion functor $T_I/i : T_I/\algtt_f \to T_I/\algtt$ preserves and reflects cofibrations and weak equivalences.
Moreover, it has a right adjoint and the counit of the adjunction is a trivial fibrations.
Thus this adjunction is a Quillen equivalence.
\end{proof}

Now, let us return to the original problem of the absence of predicate symbols in cofibrant objects.
Instead of forbidding predicate symbols completely, we can enlarge the class of cofibrations to include predicate symbols.
For every sequence of sorts $s_1, \ldots s_k$, let $P^1_{s_1, \ldots s_k}$ be the theory under $T_I$ with one additional predicate symbol $P : s_1 \times \ldots \times s_k$.
Also, we define the following theories:
\begin{align*}
P^2_{s_1, \ldots s_k} & = P^1_{s_1, \ldots s_k} \cup \{ Q : s_1 \times \ldots \times s_k, P(x_1, \ldots x_k) \sststile{}{x_1, \ldots x_k} Q(x_1, \ldots x_k) \} \\
P^3_{s_1, \ldots s_k} & = P^2_{s_1, \ldots s_k} \cup \{ R : s_1 \times \ldots \times s_k, Q(x_1, \ldots x_k) \sststile{}{x_1, \ldots x_k} R(x_1, \ldots x_k) \}
\end{align*}
Let $\I^P$ be the union of $\I$ and maps of the form $P^1_l \to P^2_l$, $P \mapsto Q$ and $P^2_l \to P^3_l$, $P \mapsto P$, $Q \mapsto R$, where $l$ is any sequence of sorts.
We define a relative cylinder object for the map $P^\alpha_l \to P^{\alpha+1}_l$ as $P^{\alpha+1}_l \amalg_{P^\alpha_l} P^{\alpha+1}_l$, where $\alpha \in \{ 1, 2\}$.
Thus any two maps $P^{\alpha+1}_l \to X$ are homotopic.
This implies that $\we_{\I^P} = \we_\I$.
To prove that there is a model structure on $T_I/\algtt$ with $\I^P$ as a set of generating cofibrations,
we just need to show that pushouts of maps $P^{\alpha+1}_l \to P^{\alpha+1}_l \amalg_{P^\alpha_l} P^{\alpha+1}_l$ are Morita equivalences.
But this is obvious since the domain and the codomain of such a pushout have the same sets of terms and axioms.
Of course, the identity functor determine a Quillen equivalence between the two model structures on $T_I/\algtt$.

Finally, let us discuss another model structure on the category of $I$-stable theories under $T_I$, which we call \emph{the syntactic model structure}.
The weak equivalences of this model structure are syntactic equivalences, $\I_0$ is a set of generating cofibrations, and every object is fibrant in this model structure.

Recall that for every theory $T$, we have a left adjoint functor $\Lang_T : \Mod{T} \to T/\algtt$ with a right adjoint $\Syn_T : T/\algtt \to \Mod{T}$.
Note that a map $f : T_1 \to T_2$ of theories under $T_I$ is a syntactic equivalence if and only if $\Syn_{T_I}(f)$ is a weak equivalence of models.
Thus we can transfer the model structure on $\Mod{T_I}$ to a model structure on $T_I/\algtt$.
To do this, we need to prove that $\Lang_{T_I}$ maps trivial cofibrations to syntactic equivalences.
But we already proved that it actually maps them to Morita equivalences.
Note that the identity functor on $T_I/\algtt$ is a left Quillen functor from the syntactic model structure to the Morita model structure
since it preserves generating cofibrations and generating trivial cofibrations.

Let $T$ be an $I$-stable theory under $T_I$.
Then the adjunction $\Lang_T \dashv \Syn_T$ is a Quillen equivalence between the model structure on $\Mod{T}$ and the syntactic model structure on $T/\algtt$.
To prove this, we just need to show that the unit of the adjunction is a weak equivalence.
But it is actually an isomorphism since $\Lang_T$ is full and faithful.

\section{Characterization of lifting properties}
\label{sec:triv-fib}

In this section we prove several useful lemmas that characterize trivial fibrations in various model structures that we considered in previous section.
Since all of these notations are defined in terms of lifting properties, we will prove general results about them.
Indeed, if we think about a map that has the weak lifting property with respect to a pair $V,\varphi$ as a weak equivalence,
then a map having the lifting property with respect to this pair can be thought of as a trivial fibration.

If we want to show that a map $f : T_1 \to T_2$ has the lifting property with respect to some pair $V,\varphi$, we could try to prove this by induction on terms.
If we do this, then we need to prove the lifting property only for function symbols (and not arbitrary terms).
Also, we need to know that we can transport terms in $T_1$ along equalities in $T_2$.
That is, if $a$ is a term of type $A$ in $T_1$ and $B$ is a type such that $f(A) = f(B)$, then there should be a term $b$ of type $B$ such that $f(a) = f(b)$ (and the same property should hold for types).
This is proved in the following lemma:

\begin{lem}[eq-char-fib]
Let $V$ be a set of variables and let $\varphi$ be a formula of a theory $T_1$ with free variables in $V$.
Let $f : T_1 \to T_2$ be a morphism of theories such that the following conditions hold:
\begin{enumerate}
\item \label{it:char-fun-fib} For every function symbol $\sigma$ of $T_2$, there exist terms $A_1$, \ldots $A_k$, $t$ of $T_1$ such that $\FV(A_i) \subseteq \{ x_1, \ldots x_{i-1} \}$ and the following sequents are derivable:
\begin{align*}
\bigwedge_{1 \leq i < j} e_{p_i}(x_i) = A_i & \sststile{T_1}{x_1, \ldots x_{j-1}} A_j\!\downarrow \text{ for every } 1 \leq j \leq k \\
\bigwedge_{1 \leq i \leq k} e_{p_i}(x_i) = A_i & \sststile{T_1}{x_1, \ldots x_k} t\!\downarrow \\
\sigma(x_1, \ldots x_k)\!\downarrow & \sststile{T_2}{x_1, \ldots x_k} f(t) = \sigma(x_1, \ldots x_k) \land \bigwedge_{1 \leq i \leq k} e_{p_i}(x_i) = f(A_i)
\end{align*}
\item \label{it:char-type-fib} For all terms $A$, $B$, and $a$ of $T_1$ such that $\varphi \sststile{T_1}{V} e_p(a) = A$, $\varphi \sststile{T_1}{V} B\!\downarrow$, and $f(\varphi) \sststile{T_2}{V} f(A) = f(B)$,
there exists a term $b$ such that $\varphi \sststile{T_1}{V} e_p(b) = B$ and $f(\varphi) \sststile{T_2}{V} f(a) = f(b)$.
\end{enumerate}
Then $f$ has the lifting property with respect to $V,\varphi$.
\end{lem}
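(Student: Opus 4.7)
The plan is to prove the following strengthening by induction on the structure of the term $a$: for every $A \in \Term_{T_1}(V)_{(d_p,n)}$ and $a \in \Term_{T_2}(V)_{(p,n)}$ with $\varphi \sststile{T_1}{V} A\!\downarrow$ and $f(\varphi) \sststile{T_2}{V} e_p(a) = f(A)$, there exists $a' \in \Term_{T_1}(V)_{(p,n)}$ satisfying both $\varphi \sststile{T_1}{V} e_p(a') = A$ and $f(a') = a$. Carrying the typing equation $e_p(a') = A$ along in the induction hypothesis is what makes the recursion close up; the literal conclusion of the lemma is an immediate consequence.

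For the base case, $a = x \in V$, the term $e_p(x)$ already lives in $T_1$, so I apply condition \ref{it:char-type-fib} with the role of ``$A$'' played by $e_p(x)$, the role of ``$a$'' by $x$, and the role of ``$B$'' by our $A$. The three hypotheses of condition \ref{it:char-type-fib} reduce to the trivial equality $\varphi \sststile{T_1}{V} e_p(x) = e_p(x)$, the given assumption $\varphi \sststile{T_1}{V} A\!\downarrow$, and the given assumption $f(\varphi) \sststile{T_2}{V} e_p(x) = f(A)$. The output is some $a' \in \Term_{T_1}(V)_{(p,n)}$ with $\varphi \sststile{T_1}{V} e_p(a') = A$ and $f(a') = f(x) = x = a$.

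For the inductive step, $a = \sigma(a_1, \ldots, a_k)$ for a function symbol $\sigma$ of $T_2$. Invoke condition \ref{it:char-fun-fib} to produce terms $A_1, \ldots, A_k, t$ of $T_1$ with the listed properties. Then, by a nested induction on $i$ from $1$ to $k$, construct lifts $a_i' \in \Term_{T_1}(V)_{(p_i,n_i)}$ satisfying $f(a_i') = a_i$ and $\varphi \sststile{T_1}{V} e_{p_i}(a_i') = A_i[a_1'/x_1, \ldots, a_{i-1}'/x_{i-1}]$ by applying the outer induction hypothesis to the strict subterm $a_i$ with ``$A$'' set to the substituted $A_i$. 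Its well-formedness in $T_1$ under $\varphi$ follows by substituting the previously established equalities $e_{p_j}(a_j') = A_j[\cdots]$ (for $j<i$) into the first clause of condition \ref{it:char-fun-fib}, and the required $f(\varphi) \sststile{T_2}{V} e_{p_i}(a_i) = f(A_i[\cdots])$ follows from the third clause of condition \ref{it:char-fun-fib} after substituting $a_j$ for $x_j$ in $T_2$ and using that $\sigma(a_1, \ldots, a_k)\!\downarrow$ holds under $f(\varphi)$.

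Finally, set $a'' \repl t[a_1'/x_1, \ldots, a_k'/x_k]$. The second clause of condition \ref{it:char-fun-fib} gives $\varphi \sststile{T_1}{V} a''\!\downarrow$, and the identity $f(t) = \sigma(x_1,\ldots,x_k)$ from the third clause yields $f(a'') = \sigma(a_1,\ldots,a_k) = a$. To upgrade $a''$ into an $a'$ with the desired typing, apply condition \ref{it:char-type-fib} once more, with the role of ``$a$'' played by $a''$, ``$A$'' by $e_p(a'')$, and ``$B$'' by our $A$; the hypotheses are automatic (the last one because $f(e_p(a'')) = e_p(a) = f(A)$ under $f(\varphi)$). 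The main bookkeeping obstacle is keeping the nested substitutions $A_i[a_1'/x_1, \ldots, a_{i-1}'/x_{i-1}]$ straight and verifying that the hypotheses of condition \ref{it:char-fun-fib} propagate cleanly through $f$ and through the inner induction, but no conceptual difficulty arises beyond this.
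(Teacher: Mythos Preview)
Your proof is correct and follows essentially the same approach as the paper's: induction on the structure of $a$, using condition~\ref{it:char-type-fib} in the variable case, and in the compound case using condition~\ref{it:char-fun-fib} to obtain $A_1,\ldots,A_k,t$, lifting the subterms recursively, substituting into $t$, and then applying condition~\ref{it:char-type-fib} once more to fix up the typing. Your explicit strengthening of the induction hypothesis to carry $\varphi \sststile{T_1}{V} e_p(a') = A$ is exactly what the paper uses implicitly when it writes ``there exist terms $a_1',\ldots,a_k'$ such that $\varphi \sststile{T_1}{V} e_{p_i}(a_i') = A_i[x_1 \repl a_1',\ldots,x_{i-1} \repl a_{i-1}']$''; you have simply made the bookkeeping visible.
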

\begin{proof}
Let $A$ and $a$ be terms such that $\varphi \sststile{T_1}{V} A\!\downarrow$ and $f(\varphi) \sststile{T_2}{V} e_p(a) = f(A)$.
Then we construct the required lifting by induction on $a$.
If $a = x$ is a variable, then \eqref{it:char-type-fib} implies that that the required lifting exists.

Now, suppose that $a = \sigma(a_1, \ldots a_k)$.
Let $A_1$, \ldots $A_k$, $t$ be terms as described in \eqref{it:char-fun-fib}.
By the induction hypothesis, there exist terms $a_1'$, \ldots $a_k'$ such that $\varphi \sststile{T_1}{V} e_{p_i}(a_i') = A_i[x_1 \repl a_1', \ldots x_{i-1} \repl a_{i-1}']$ and $f(\varphi) \sststile{T_2}{V} f(a_i') = a_i$.
Let $a' = t[x_1 \repl a_1', \ldots x_k \repl a_k']$.
Since $\varphi \sststile{T_1}{V} a'\!\downarrow$ and $f(\varphi) \sststile{T_2}{V} f(a') = f(a)$, \eqref{it:char-type-fib} implies that there exists the required lifting.
\end{proof}

\begin{remark}[stable-eq-char-fib]
If $T_2$ is a stable contextual theory, then it is enough to check the first condition of the previous lemma for basic function symbols.
It is easy to see that this implies the general case.
\end{remark}

We can simplify the second condition of \rlem{eq-char-fib}.
It is enough to show that given two types $A$ and $B$ in $T_1$ such that $f(A) = f(B)$, the identity map on $f(A)$ lifts to a term in $T_1$:

\begin{lem}[char-type-fib]
Let $V$ be a set of variables and let $\varphi$ be a formula of a theory $T_1$ with free variables in $V$.
Let $f : T_1 \to T_2$ be a morphism of theories such that the following condition holds.
For every pair of terms $A$ and $B$ of $T_1$ of sort $(\ty,n)$ such that
$\varphi \sststile{T_1}{V} \ft(A) = \ft(B)$ and $f(\varphi) \sststile{T_2}{V} f(A) = f(B)$,
there exists a term $b$ such that $\varphi \sststile{T_1}{V} A \vdash b : B$ and $f(b)$ equals to $v_0(f(A))$.
Then the second condition of \rlem{eq-char-fib} holds.
\end{lem}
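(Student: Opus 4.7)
The plan is to reduce condition~(\ref{it:char-type-fib}) of \rlem{eq-char-fib} to the type-level coercion hypothesis of this lemma by a case distinction on the sort parameter $p$, followed by substitution. Fix data $A$, $B$, $a$ of $T_1$ satisfying $\varphi \sststile{T_1}{V} e_p(a) = A$, $\varphi \sststile{T_1}{V} B\!\downarrow$, and $f(\varphi) \sststile{T_2}{V} f(A) = f(B)$.

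In the principal case $p = \tm$, the term $a$ satisfies $\ty(a) = A$, and $B$ is a type with $f(A) = f(B)$ in $T_2$. After verifying that $\varphi \sststile{T_1}{V} \ft(A) = \ft(B)$ (see the obstacle below), I apply the hypothesis of the lemma to the pair $(A, B)$ to obtain a term $b'$ with $\varphi \sststile{T_1}{V} A \vdash b' : B$ and $f(b') = v_0(f(A))$ in $T_2$. Substituting $a$ for the distinguished context variable in $b'$ produces a term $b$ of sort $p$; naturality of substitution through $f$ yields $f(b) = v_0(f(A))[f(a)] = f(a)$, while $\varphi \sststile{T_1}{V} \ty(b) = B$ follows from $B$ being weakened by $A$ (this uses $\ft(A) = \ft(B)$). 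Thus $b$ is the required lift.

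In the remaining case $p = \ty$, the type $a$ has $\ft(a) = A$ and $B$ is a context with $f(A) = f(B)$ in $T_2$. Exploiting the identification $(\ctx, n+1) \cong (\ty, n)$, I reduce to a type-level coercion one level lower and induct on $n$. The base case $n = 0$ is immediate, since the uniqueness of the empty context forces $A = B$ in $T_1$, whereupon $b := a$ suffices.

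The main difficulty is securing $\varphi \sststile{T_1}{V} \ft(A) = \ft(B)$ from the data of~(\ref{it:char-type-fib}), since only $f(\ft(A)) = f(\ft(B))$ in $T_2$ is immediate. I would handle this by a simultaneous induction on $n$, invoking the lemma's hypothesis at level $n - 1$ on the pair of parent contexts (themselves types of level $n - 1$) to promote the $T_2$-equality of parent contexts back into $T_1$ through the coercion term furnished by the hypothesis.
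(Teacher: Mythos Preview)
Your plan has a genuine gap in the handling of the parent contexts. You correctly identify the obstacle: from the data of condition~\eqref{it:char-type-fib} you only get $f(\ft(A)) = f(\ft(B))$ in $T_2$, not $\varphi \sststile{T_1}{V} \ft(A) = \ft(B)$. But your proposed fix --- invoking the hypothesis at level $n-1$ to ``promote the $T_2$-equality \ldots\ back into $T_1$ through the coercion term'' --- cannot work as stated. The hypothesis produces a \emph{term} $\ft(A) \vdash c : \ft(B)$ (viewing $\ft(A),\ft(B)$ as types one level down), not a judgmental equality $\ft(A) = \ft(B)$ in $T_1$. No amount of induction on $n$ will manufacture that equality, because in general it is simply false in $T_1$. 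So you cannot apply the hypothesis of the lemma to the pair $(A,B)$ directly, and the substitution step you describe in the $p = \tm$ case breaks down (you need $\ft(A) = \ft(B)$ even to make sense of ``$B$ weakened by $A$'').

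The paper's proof does not attempt to recover the equality. Instead, it first proves an auxiliary claim: for any two contexts $\Gamma,\Delta$ defined in $T_1$ with $f(\Gamma) = f(\Delta)$ in $T_2$, there is a context \emph{morphism} $d : \Gamma \to \Delta$ with $f(d)$ the identity. This is by induction on context length; in the inductive step one pulls back $\Delta$ along the morphism $d$ already built on the parents, obtaining $d^*(\Delta)$ which \emph{does} satisfy $\ft(\Gamma) = \ft(d^*(\Delta))$ in $T_1$, so the hypothesis of the lemma applies to $(\Gamma, d^*(\Delta))$ and supplies the next component of the morphism. With this in hand, the main claim is handled uniformly: view $A,B$ as contexts, take the morphism $d$ between them, and transport $a$ along $d$ (pullback for $p = \ty$; for $p = \tm$, pull back $\ty(a)$, apply the hypothesis to the reindexed pair, and substitute). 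The point is that the coercion terms you obtain inductively should be assembled into a substitution used to \emph{reindex} $B$ so that its parent matches $\ft(A)$, rather than used to argue for an equality that need not hold.
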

\begin{proof}
First, note that if $\Gamma$ and $\Delta$ are context such that $\varphi \sststile{T_1}{V} \Gamma\!\!\downarrow\!\land\:\Delta\!\!\downarrow$ and $f(\varphi) \sststile{T_2}{V} f(\Gamma) = f(\Delta)$,
then there is a morphism of context $d : \Gamma \to \Delta$ such that $f(d)$ is the identity morphism.
We prove this by induction on the length of contexts.
There is a unique morphism between empty contexts.
If contexts are not empty, then we have a morphism $d : \ft(\Gamma) \to \ft(\Delta)$ by the induction hypothesis.
Since $f(d^*(\Delta))$ equals to $f(\Gamma)$, we have a term $\Gamma \vdash b : d^*(\Delta)$ such that $f(b)$ equals to $v_0(f(\Gamma))$ by assumption.
Thus, $d,b$ is the required morphism of contexts $\Gamma$ and $\Delta$.

For all terms $A$, $B$, and $a$ of $T_1$ such that $\varphi \sststile{T_1}{V} e_p(a) = A$ and $f(\varphi) \sststile{T_2}{V} f(A) = f(B)$,
there exists a term $b$ such that $\varphi \sststile{T_1}{V} e_p(b) = B$ and $f(\varphi) \sststile{T_2}{V} f(a) = f(b)$.
We have a morphism $d : A \to B$ such that $f(d)$ equals to the identity morphism.
if $p = \ty$, then we can define $b$ as $d^*(a)$.
if $p = \tm$, then let $B = d^*(\ty(a))$.
Since $f(B) = f(\ty(a))$, we have a term $\ty(a) \vdash b' : B$ such that $f(b')$ equals to $v_0(f(B))$ by assumption.
Then we can define $b$ as $b'[d^*(a)]$.
\end{proof}

If we want to simplify the first condition of \rlem{eq-char-fib}, then we need certain assumptions on the theory $T_2$.
We will say that a theory \emph{has well-defined function symbols} if (it is isomorphic to a theory such that) there exists a well-founded relation on the set of function symbols such that,
for every function symbol $\sigma$, either $\sigma$ equals to one of the function symbols $\ty_n$, $\ft_n$ or there exist terms $A_1$, \ldots $A_k$ satisfying the following conditions:
\begin{enumerate}
\item All function symbols that occur in $A_1$, \ldots $A_k$ are less than $\sigma$.
\item $\FV(A_i) \subseteq \{ x_1, \ldots x_{i-1} \}$ for every $1 \leq i \leq k$.
\item The following sequents are derivable:
\begin{align*}
\bigwedge_{1 \leq i < j} e_{p_i}(x_i) = A_i & \sststile{}{x_1, \ldots x_{j-1}} A_j\!\downarrow \text{ for every } 1 \leq j \leq k \\
\bigwedge_{1 \leq i \leq k} e_{p_i}(x_i) = A_i & \ssststile{}{x_1, \ldots x_k} \sigma(x_1, \ldots x_k)\!\downarrow
\end{align*}
\end{enumerate}
We will say that terms $A_1$, \ldots $A_k$ \emph{define} the function symbol $\sigma$.

This condition is easy to check and most of the theories that occur in practice satisfy it.
An example of a theory that does not satisfy it appeared in \cite{alg-models}: it is the theory of filler operations.
The rest of the theories that appear in \cite{alg-models} and also all of the theories in this paper and in \cite{alg-tt} have well-defined function symbols.

Now, we can simplify the first condition of \rlem{eq-char-fib}.
This condition requires to construct not only a lift of a function symbol but also terms in the left hand side of the sequent defining this term.
The following lemma shows that it is enough to construct this lift with an arbitrary formula as the left hand side of this sequent:

\begin{lem}[eq-comp-char-fib]
Let $V$ be a set of variables and let $\varphi$ be a formula of a theory $T_1$ with free variables in $V$.
Let $f : T_1 \to T_2$ be a morphism of theories such that $T_2$ has well-defined function symbols.
Suppose that the second condition of \rlem{eq-char-fib} holds.
Moreover, suppose that, for every function symbol $\sigma$ of $T_2$, either the first condition of \rlem{eq-char-fib} holds for $\sigma$ or,
for every pair $\psi,\{ x_1, \ldots x_k \}$ in $P_M$, if $f(\psi) \sststile{T_2}{x_1, \ldots x_k} \sigma(x_1, \ldots x_k)\!\downarrow$ is derivable, then there exists a term $t$ of $T_1$ such that the following sequents are derivable:
\begin{align*}
\psi & \sststile{T_1}{x_1, \ldots x_k} t\!\downarrow \\
f(\psi) & \sststile{T_2}{x_1, \ldots x_k} \sigma(x_1, \ldots x_k) = f(t)
\end{align*}
Then the first condition of \rlem{eq-char-fib} holds.
\end{lem}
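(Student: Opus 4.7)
The plan is to prove the first condition of \rlem{eq-char-fib} for every function symbol $\sigma$ of $T_2$ by well-founded induction using the order provided by the definition of well-defined function symbols. Fix $\sigma$ and assume the conclusion holds for every function symbol strictly less than $\sigma$. If the first condition already holds for $\sigma$ there is nothing to check. The structural cases $\sigma = \ty_n$ and $\sigma = \ft_n$ are handled by taking the same symbols in $T_1$ (which $f$ preserves) as witnesses. Otherwise, hypothesis~(b) applies to $\sigma$ and we let $B_1, \ldots, B_k$ be the defining terms of $\sigma$ in $T_2$; by the definition of well-definedness, all their function symbols are strictly smaller than $\sigma$.

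The core of the argument is to assemble terms $A_1, \ldots, A_k$ in $T_1$ with $FV(A_i) \subseteq \{x_1, \ldots, x_{i-1}\}$ such that the formula $\psi_\sigma := \bigwedge_i e_{p_i}(x_i) = A_i$ together with $V_\sigma := \{x_1, \ldots, x_k\}$ forms a pair in $P_M$ and makes $\sigma(x_1, \ldots, x_k)\!\downarrow$ derivable in $T_2$ under $f(\psi_\sigma)$. Given these ingredients, we apply hypothesis~(b) to $\sigma$ with the pair $(V_\sigma, \psi_\sigma)$ to immediately produce the witnessing term $t$ required by the first condition.

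The $A_i$ are built sequentially, relying on the following sub-lemma proved by induction on term structure: for every pair $(V', \psi') \in P_M$ and every term $B$ of $T_2$ in variables $V'$ all of whose function symbols are strictly less than $\sigma$ and which satisfies $f(\psi') \sststile{T_2}{V'} B\!\downarrow$, there exists a term $A$ in $T_1$ with $\psi' \sststile{T_1}{V'} A\!\downarrow$ and $f(\psi') \sststile{T_2}{V'} f(A) = B$. Variables lift to themselves. For a compound $B = \tau(b_1, \ldots, b_m)$ we invoke the outer induction hypothesis to obtain witnesses $A_1^\tau, \ldots, A_m^\tau, t^\tau$ for $\tau$, apply the sub-induction hypothesis to lift each $b_j$ to $a_j$ in $T_1$, and set $A := t^\tau[y_1 \repl a_1, \ldots, y_m \repl a_m]$.

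The main technical obstacle is to ensure that these successive substitutions are genuinely well-defined in $T_1$: one needs $\psi' \sststile{T_1}{V'} e_{p_j}(a_j) = A_j^\tau[y_1 \repl a_1, \ldots, y_{j-1} \repl a_{j-1}]$, whereas the sub-induction a priori only delivers the matching identity after applying $f$. Lifting this equality back to $T_1$ is where hypothesis~(2) comes in, accessed through \rlem{mcf}, which lets us absorb an arbitrary pair $(V', \psi')$ into fresh constants plus auxiliary axioms in an enlarged theory and thereby reduce the required lift to an instance of the second condition of \rlem{eq-char-fib}. Once all $A_i$ have been assembled, the pair $(V_\sigma, \psi_\sigma)$ lies in $P_M$ by construction, its image forces $\sigma(x_1, \ldots, x_k)\!\downarrow$ in $T_2$ because the $B_i$ define $\sigma$ and coincide with $f(A_i)$ modulo $f(\psi_\sigma)$, and~(b) delivers the concluding term $t$.
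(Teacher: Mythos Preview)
Your overall strategy matches the paper's: well-founded induction on the function symbol $\sigma$, lift the defining terms of $\sigma$ to $T_1$ using the fact that they involve only strictly smaller symbols, then apply hypothesis~(b) to the resulting pair $(\psi_\sigma, V_\sigma) \in P_M$. The paper compresses your sub-lemma into a single sentence, observing that the proof of \rlem{eq-char-fib} goes through verbatim when condition~(1) is assumed only for a subset of function symbols (those strictly below $\sigma$), and then lifts the defining terms one at a time, building up the pair in $P_M$ step by step (first lifting $\ft^{n_i}(e_{p_i}(A_i))$, then $\ft^{n_i-1}(e_{p_i}(A_i))$, and so on up to $A_i$ itself).

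Your detour through \rlem{mcf}, however, is confused. You want to apply condition~(2) at a pair $(V', \psi')$ arising during the inductive lifting, and you propose absorbing $(V', \psi')$ into fresh constants in an enlarged theory. But condition~(2) is a hypothesis about the fixed morphism $f : T_1 \to T_2$; passing to an enlarged theory changes the morphism, and nothing in the hypotheses hands you condition~(2) there. The correct move---the one the paper makes implicitly---is simply to observe that each $(V', \psi')$ you construct is already in $P_M$, and to invoke condition~(2) for that pair directly. (In the actual usage of the lemma, via \rprop{eq-comp-char-fib}, condition~(2) is assumed for every pair in $P \supseteq P_M$; the dangling $(V,\varphi)$ in the lemma's preamble plays no role in the conclusion, which concerns only condition~(1).) Drop the appeal to \rlem{mcf} and your argument is essentially the paper's, just with the recursion spelled out in more detail.
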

\begin{proof}
First, note that if the first condition of \rlem{eq-char-fib} holds for some subset of function symbols of $T_2$, then we still can lift terms constructed from function symbols from this subset.
The proof of this fact is the same as the proof of \rlem{eq-char-fib}.
We also note that symbols $\ty_n$ and $\ft_n$ satisfy the first condition of \rlem{eq-char-fib}, so we may assume that they are less than every other symbol.

Now, we can prove by well-founded induction on $\sigma$ that the first condition of \rlem{eq-char-fib} holds.
If it holds for $\sigma$, then we are done.
Otherwise, let $A_1$, \ldots $A_k$ be terms that define $\sigma$.
By the induction hypothesis, there exist lifts $A_1'$, \ldots $A_k'$ of these terms (we first lift $\ft^{n_i}(e_{p_i}(A_i))$, then $\ft^{n_i-1}(e_{p_i}(A_i))$, and so on; finally, we can lift $A_i$).
If we let $\psi = (\bigwedge_{1 \leq i \leq k} e_{p_i}(x_i) = A_i')$, then $f(\psi) \sststile{T_2}{x_1, \ldots x_k} \sigma(x_1, \ldots x_k)\!\downarrow$.
Thus, by assumption, we have a term $t$ such that $\psi \sststile{T_1}{x_1, \ldots x_k} t\!\downarrow$ and $f(\psi) \sststile{T_2}{x_1, \ldots x_k} f(t) = \sigma(x_1, \ldots x_k)$.
Since $\sigma(x_1, \ldots x_k)\!\downarrow\ \sststile{T_2}{x_1, \ldots x_k} f(\psi)$, we are done.
\end{proof}

Finally, we can show that the conditions of the previous lemmas are often not only sufficient, but also necessary:

\begin{prop}[eq-comp-char-fib]
Let $P$ be a set such that $P_M \subseteq P \subseteq P_S$.
If $f : T_1 \to T_2$ is a morphism of theories such that $T_2$ has well-defined function symbols, then the following conditions are equivalent:
\begin{enumerate}
\item \label{it:lem-fib-sec} The first condition of \rlem{eq-char-fib} and conditions of \rlem{char-type-fib} hold for all pairs in $P$.
\item \label{it:lem-fib} Both conditions of \rlem{eq-char-fib} hold for all pairs in $P$.
\item \label{it:eq-fib} $f$ has the lifting property with respect to $P$.
\item \label{it:cond-fib} Conditions of \rlem{eq-comp-char-fib} hold for all pairs in $P$.
\end{enumerate}
\end{prop}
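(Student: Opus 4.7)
My plan is to establish the four equivalences as a cycle of implications. The pointwise application of \rlem{char-type-fib} over all pairs of $P$ immediately gives (1) $\Rightarrow$ (2), and a similar pointwise application of \rlem{eq-char-fib} gives (2) $\Rightarrow$ (3). The equivalence (2) $\Leftrightarrow$ (4) is essentially tautological: \rlem{eq-comp-char-fib} provides (4) $\Rightarrow$ (2), and for the reverse direction, whenever the first condition of \rlem{eq-char-fib} holds for every function symbol $\sigma$ of $T_2$, it automatically satisfies the first disjunct of the function-symbol hypothesis of \rlem{eq-comp-char-fib}, while the second condition of \rlem{eq-char-fib} is shared between (2) and (4).

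For (2) $\Rightarrow$ (1), given $(V,\varphi) \in P_M \subseteq P$ and types $A, B$ of sort $(\ty, n)$ satisfying the hypothesis of \rlem{char-type-fib}, I would extend to the pair $(V \cup \{v_0\}, \varphi \wedge e_{\tm}(v_0) = A)$. This extension also lies in $P_M \subseteq P$, and applying the second condition of \rlem{eq-char-fib} (provided by (2) for this extended pair) with $a := v_0$, source type $A$, and target type $B$ yields a term $b$ with $\ty(b) = B$ in the extended context and $f(b) = v_0$, which is precisely the conclusion of \rlem{char-type-fib}.

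The main work is in (3) $\Rightarrow$ (4). The second condition of \rlem{eq-char-fib} follows immediately from the lifting property: for $A$, $B$, $a$ in $T_1$ as in the hypothesis, I lift $f(a) \in T_2$ along $B \in T_1$ using the lifting property for $(V, \varphi) \in P$. For the function-symbol hypothesis, I plan to verify the alternative disjunct for each $\sigma$ by well-founded induction on the function symbols of $T_2$, using the well-defined function symbols assumption. In the base case $\sigma \in \{\ty_n, \ft_n\}$, the relevant conjunct of $\psi$ in the given $P_M$-pair directly supplies the lift. In the inductive step, given $\sigma$ with defining terms $A_1^{T_2}, \ldots, A_k^{T_2}$ in $T_2$ and a $P_M$-pair $(V', \psi)$ with $f(\psi) \vdash \sigma(x_1, \ldots, x_k)\!\downarrow$, I would exploit the bidirectional defining sequent to rewrite the hypothesis as $f(\psi) \vdash \bigwedge_i e_{p_i}(x_i) = A_i^{T_2}$, apply the inductive hypothesis combined with the lifting property to construct $T_1$-lifts of the subterms $A_i^{T_2}$, and finally invoke the lifting property once more against a $P_M$-pair assembled from these lifts to produce the required term $t$.

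The hardest part will be the inductive step in (3) $\Rightarrow$ (4), where one must simultaneously manage the sort hierarchy of $T_2$, the recursive structure of the defining terms, and the $P_M$-form of the pair used at each application of the lifting property, ensuring that the hypothesis of the lifting property is derivable in $T_1$ at every stage.
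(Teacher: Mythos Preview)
Your cycle $(1)\Rightarrow(2)\Rightarrow(3)$ and the observation that $(2)\Rightarrow(4)$ is trivial (the first disjunct is already satisfied) are fine. But the two remaining steps are both more complicated than necessary and, as written, contain genuine gaps.

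\textbf{On $(3)\Rightarrow(4)$.} The paper dispatches this in one line: both conditions appearing in \rlem{eq-comp-char-fib} are themselves instances of the lifting property for pairs in $P$. The second condition of \rlem{eq-char-fib} you already see is immediate. For the function-symbol clause one just verifies the second disjunct directly: given $(\psi,\{x_1,\ldots,x_k\})\in P_M\subseteq P$ with $f(\psi)\sststile{T_2}{}\sigma(x_1,\ldots,x_k)\!\downarrow$, iterate the lifting property for this \emph{same} pair to lift first $e_p(\sigma(\overline{x}))$, then $e_p(e_p(\sigma(\overline{x})))$, and so on down to sort $(\ctx,0)$, where the lift is the empty context; climbing back up produces the required $t$. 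No induction on function symbols is needed. Your proposed induction essentially re-proves \rlem{eq-comp-char-fib} inside this step, and the final move---applying the lifting property to a \emph{new} $P_M$-pair $\psi'=\bigwedge_i e_{p_i}(x_i)=A_i'$ assembled from your lifts---yields a term $t$ with $\psi'\sststile{T_1}{}t\!\downarrow$, not $\psi\sststile{T_1}{}t\!\downarrow$. Since you only know $f(t_i)=f(A_i')$ in $T_2$, not $t_i=A_i'$ in $T_1$, you cannot transfer definedness from $\psi'$ to $\psi$.

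\textbf{On $(2)\Rightarrow(1)$.} Your argument handles only pairs $(V,\varphi)\in P_M$: for an arbitrary $(V,\varphi)\in P$ the extension $(V\cup\{v_0\},\varphi\wedge e_\tm(v_0)=A)$ need not lie in $P$, so you cannot invoke condition~(2) for it. The paper avoids this entirely by observing that the hypothesis of \rlem{char-type-fib} for a pair $(V,\varphi)$ is already a special case of the lifting property for that \emph{same} pair: with $A,B$ as given, the term $p_A^*(B)$ is defined in $T_1$ under $\varphi$, and one lifts $v_0(f(A))$ against it. Thus $(3)\Rightarrow(1)$ directly, and together with $(2)\Rightarrow(3)$ the cycle closes without extending any pair.
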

\begin{proof}
The implication \eqref{it:lem-fib-sec} $\implies$ \eqref{it:lem-fib} follows from \rlem{char-type-fib}.
The implication \eqref{it:lem-fib} $\implies$ \eqref{it:eq-fib} follows from \rlem{eq-char-fib}.
The implication \eqref{it:eq-fib} $\implies$ \eqref{it:cond-fib} is obvious since conditions in \eqref{it:cond-fib} are just special cases of the lifting property.
Finally, \eqref{it:cond-fib} implies \eqref{it:lem-fib} by \rlem{eq-comp-char-fib}, and conditions of \rlem{char-type-fib} are a special case of the lifting property, hence they follow from \eqref{it:eq-fib}.
\end{proof}

\section{Confluent theories}
\label{sec:confluent}

The axioms of type theories that occur in practice often can be divided in two parts: the first part determines when function symbols are defined and the second part is defined in terms of some reduction relation,
which often satisfies some additional properties such as confluence.
In this section we define confluent theories as theories in this form.
We also prove that Morita equivalences between them are easier to construct.

\subsection{Theories with separated axioms}

We will say that a theory has separated axioms if the set of axioms of this theory consists of three disjoint subsets $\mathcal{A}_d$, $\mathcal{A}'_d$, and $\mathcal{A}_e$ such that the following conditions hold:
\begin{enumerate}
\item The set $\mathcal{A}_d$ consists of axioms of the form $\varphi \sststile{}{x_1, \ldots x_k} \sigma(x_1, \ldots x_k)\!\downarrow$ and, for every $\sigma$, there is exactly one axiom of this form.
We will denote the left hand side of this axiom by $\varphi_\sigma$.
\item The set $\mathcal{A}'_d$ consists of (not necessary all) sequents of the form $\sigma(x_1, \ldots x_k)\!\downarrow\ \sststile{}{x_1, \ldots x_k} \varphi_\sigma$.
\item \label{it:sep-ax} For every axiom $\varphi \sststile{}{V} \psi$ in $\mathcal{A}_e$ and every subterm $\sigma(t_1, \ldots t_k)$ of $\psi$,
the sequent $\varphi \sststile{}{V} \varphi_\sigma[t_1/x_1, \ldots t_k/x_k]$ is derivable from the axioms $\mathcal{A}_d \cup \mathcal{A}_e$.
\end{enumerate}
We will say that a theory with separated axioms is \emph{minimal} (resp., \emph{maximal}) if $\mathcal{A}'_d$ is empty (resp., consists of all sequents of the specified form).

\begin{lem}[der-separated-closed]
Let $T$ be a theory with separated axioms.
If a sequent $\sststile{}{} \psi$ is derivable in $T$, then it is derivable from $\mathcal{A}_d \cup \mathcal{A}_e$.
\end{lem}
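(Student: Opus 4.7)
The plan is to reduce to the natural deduction system via the preceding proposition and then eliminate every use of an axiom in $\mathcal{A}'_d$ by induction on the derivation. Observe first that every sub-sequent of a natural-deduction derivation shares the left-hand side and variable context of its conclusion, so a derivation of a closed sequent $\sststile{}{} \psi$ has only closed sub-sequents; in particular \axref{nv} and \axref{nh} cannot occur, and every use of an $\mathcal{A}'_d$-axiom $\sigma(x_1, \ldots, x_k)\!\downarrow \sststile{}{x_1, \ldots, x_k} \varphi_\sigma$ must come through \axref{na} applied with some substitution $\vec{t}/\vec{x}$. Such an application requires sub-derivations of $\sststile{}{} \sigma(\vec{t})\!\downarrow$ and $\sststile{}{} t_i\!\downarrow$ and produces $\sststile{}{} \chi_j[\vec{t}/\vec{x}]$ for a conjunct $\chi_j$ of $\varphi_\sigma$. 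By the inductive hypothesis the sub-derivations are already available in the minimal theory, so the main lemma reduces to the following auxiliary claim.

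\emph{Key claim}: for every closed atomic formula $\chi$ derivable in the minimal theory and every term-subterm $\sigma(\vec{s})$ of $\chi$, the sequent $\sststile{}{} \varphi_\sigma[\vec{s}/\vec{x}]$ is derivable in the minimal theory. Applied to the subterm $\sigma(\vec{t})$ of $\sigma(\vec{t})\!\downarrow$, this delivers the conjunct $\chi_j[\vec{t}/\vec{x}]$ needed in the $\mathcal{A}'_d$-case above. I would prove the claim by induction on the minimal-theory derivation of $\chi$, with cases on the last rule. The rules \axref{np}, \axref{nf}, and \axref{ns} merely inherit or permute the set of subterms, so the inductive hypothesis applies directly. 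The rule \axref{nl}, rewriting $\psi[a/x]$ to $\psi[b/x]$ using $a = b$, is handled by combining the inductive hypothesis on the sub-derivation of $\sststile{}{} \psi[a/x]$ with conjunctwise applications of \axref{nl} along $a = b$, while subterms of $b$ not arising from $\psi$ are covered by the inductive hypothesis on the sub-derivation of $\sststile{}{} a = b$. The case \axref{na} with an axiom in $\mathcal{A}_d$ is immediate: the required $\varphi_\sigma[\vec{t}/\vec{x}]$ is literally the premise of the rule, and inner subterms inside the $t_i$ are covered by the inductive hypothesis on the premises $\sststile{}{} t_i\!\downarrow$. The substantive case is \axref{na} with an axiom $\varphi \sststile{}{\vec{y}} \psi$ in $\mathcal{A}_e$ and substitution $\rho = [\vec{t}/\vec{y}]$: a subterm of $\chi_j[\rho]$ is either the image $\sigma(\vec{s})[\rho]$ of a subterm of the axiom's conclusion, in which case the closure condition imposed on $\mathcal{A}_e$, applied using the already-derived premise $\varphi[\rho]$, supplies the required sequent, or it is a subterm strictly inside one of the $t_i$, which is handled by the inductive hypothesis on the derivation of $\sststile{}{} t_i\!\downarrow$.

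The main obstacle I anticipate is precisely this $\mathcal{A}_e$-case of the auxiliary claim: one must carefully partition the subterms of $\chi_j[\rho]$ into those originating in the axiom's conclusion (so that the closure hypothesis on $\mathcal{A}_e$ applies with the correct inner substitution) and those lying strictly inside a substituted term $t_i$ (so that the inductive hypothesis takes over). The \axref{nl}-case is the next most delicate point, since it is where the argument actually relies on the minimal theory being closed under substituting terms into equational hypotheses.
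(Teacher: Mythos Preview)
Your proposal is correct and follows essentially the same approach as the paper: both prove the auxiliary ``subterm'' claim by induction on a minimal-theory natural-deduction derivation, and then use it to discharge the single nontrivial case (an \axref{na}-instance of an $\mathcal{A}'_d$-axiom) in the main induction. Your case analysis for the key claim---in particular the treatment of \axref{nl} via the induction hypothesis on $\psi[a/x]$ together with rewriting along $a=b$, and the split in the $\mathcal{A}_e$-case between subterms coming from the axiom's conclusion (handled by the closure condition) and subterms inside the substituted $t_i$ (handled by the induction hypothesis on $\sststile{}{} t_i\!\downarrow$)---matches the paper's argument, and is in fact slightly more explicit about the second half of the $\mathcal{A}_e$-split than the paper's terse ``follows from the assumption.''
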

\begin{proof}
First, let us prove the following fact.
If a sequent $\sststile{}{} \psi$ is derivable from $\mathcal{A}_d \cup \mathcal{A}_e$ and a term $\sigma(t_1, \ldots t_k)$ is a subterm of $\psi$,
then the sequent $\sststile{}{} \varphi_\sigma[t_1/x_1, \ldots t_k/x_k]$ is derivable from $\mathcal{A}_d \cup \mathcal{A}_e$.
We prove this by induction on the derivation of $\sststile{}{} \psi$ in the natural deduction system.
Let us consider the case \axref{nl}:
\begin{center}
\AxiomC{$\sststile{}{} a = b$}
\AxiomC{$\sststile{}{} \psi[a/x]$}
\RightLabel{\axref{nl}}
\BinaryInfC{$\sststile{}{} \psi[b/x]$}
\DisplayProof
\end{center}
If $\sigma(t_1, \ldots t_k)$ is a subterm of $b$, then the required property follows from the induction hypothesis for $\sststile{}{} a = b$.
Otherwise, $\sigma$ belongs to $\psi$ and there exist terms $t_1'$, \ldots $t_k'$ and a formula $\psi'$ such that $t_i = t_i'[b/x]$, $\psi = \psi'[\sigma(t_1', \ldots t_k')/x]$, and $x \notin \FV(\psi')$.
The induction hypothesis implies that the sequent \[ \sststile{}{} \varphi_\sigma[t_1'[a/x]/x_1, \ldots t_k'[a/x]/x_k] \] is derivable.
Since the sequent $\sststile{}{} a = b$ is derivable, this implies that the required sequent $\sststile{}{} \varphi_\sigma[t_1/x_1, \ldots t_k/x_k]$ is also derivable.

Let us consider the inference rule for axioms from $\mathcal{A}_d$:
\smallskip
\begin{center}
\AxiomC{$\sststile{}{} s_i\!\downarrow$, $1 \leq i \leq n$}
\AxiomC{$\sststile{}{} \varphi_\tau[s_1/y_1, \ldots s_n/y_n]$}
\RightLabel{\axref{na}}
\BinaryInfC{$\sststile{}{} \tau(s_1, \ldots s_n)\!\downarrow$}
\DisplayProof
\end{center}
If $\sigma(t_1, \ldots t_k)$ is a subterm of $s_i$ for some $i$, then the required property follows from the induction hypothesis for $\sststile{}{} s_i\!\downarrow$.
Otherwise, $\tau(s_1, \ldots s_n) = \sigma(t_1, \ldots t_k)$ and the required property is obvious.
This inference rule for axioms from $\mathcal{A}_e$ follows from \eqref{it:sep-ax}.
The rest of the inference rules are trivial.

Now, we can prove the lemma.
We proceed by induction on the inference of $\sststile{}{} \psi$.
Most of the cases follow immediately from the induction hypothesis.
The only nontrivial case is the inference rule for axioms from $\mathcal{A}'_d$:
\smallskip
\begin{center}
\AxiomC{$\sststile{}{} t_i\!\downarrow$, $1 \leq i \leq k$}
\AxiomC{$\sststile{}{} \sigma(t_1, \ldots t_k)\!\downarrow$}
\RightLabel{\axlabel{na}}
\BinaryInfC{$\sststile{}{} \varphi_\sigma[t_1/x_1, \ldots t_k/x_k]$}
\DisplayProof
\end{center}
By the induction hypothesis, the sequent $\sststile{}{} \sigma(t_1, \ldots t_k)\!\downarrow$ is derivable from $\mathcal{A}_d \cup \mathcal{A}_e$
and the fact that we just proved implies that $\sststile{}{} \varphi_\sigma[t_1/x_1, \ldots t_k/x_k]$ is also derivable from these axioms.
\end{proof}

\begin{lem}[morita-separated]
Let $T$ be a theory and let $(\varphi_1 \land \ldots \land \varphi_n, \{ x_1, \ldots x_n \})$ be a pair in $P_M$.
If $T$ has separated axioms, then so does the following theory:
\[ T_n = T \cup \{ \sststile{}{} c_i\!\downarrow |\ 1 \leq i \leq n \} \cup \{ \sststile{}{} \varphi_i[c_1/x_1, \ldots c_i/x_i]\ |\ 1 \leq i \leq n \} \]
\end{lem}
\begin{proof}
Let $\varphi_i$ be equal to $e_{p_i}(x_i) = A_i$.
We proceed by induction on $n$.
Since $T_0 = T$, the case $n = 0$ holds by assumption.
Assume that $T_{n-1}$ has separated axioms.
To prove that $T_n$ also has separated axioms, we need to show that, for every subterm $\sigma(t_1, \ldots t_k)$ of $A_n[c_1/x_1, \ldots c_{n-1}/x_{n-1}]$,
the sequent $\sststile{}{} \varphi_\sigma[t_1/x_1, \ldots t_k/x_k]\!\downarrow$ is derivable from $\mathcal{A}_d \cup \mathcal{A}_e$.
Since the sequent $\sststile{}{} A_n[c_1/x_1, \ldots c_{n-1}/x_{n-1}]\!\downarrow$ is derivable in $T_{n-1}$, the sequents $\sststile{}{} \sigma(t_1, \ldots t_k)\!\downarrow$ and $\sststile{}{} \varphi_\sigma[t_1/x_1, \ldots t_k/x_k]$ are also derivable in it.
Since $T_{n-1}$ has separated axioms, \rlem{der-separated-closed} implies that $\sststile{}{} \varphi_\sigma[t_1/x_1, \ldots t_k/x_k]$ is derivable from $\mathcal{A}_d \cup \mathcal{A}_e$ in $T_{n-1}$ and hence in $T_n$.
\end{proof}

\begin{prop}[der-separated]
Let $T$ be a theory with separated axioms and let $(\varphi,V)$ be a pair in $P_M$.
If a sequent $\varphi \sststile{}{V} \psi$ is derivable in $T$, then it is derivable from $\mathcal{A}_d \cup \mathcal{A}_e$.
\end{prop}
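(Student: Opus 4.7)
The plan is to reduce the proposition to its closed-sequent version \rlem{der-separated-closed} by introducing fresh constants. Writing $\varphi = \varphi_1 \land \ldots \land \varphi_n$ with $V = \{x_1, \ldots, x_n\}$ and each $\varphi_i$ of the form $e_{p_i}(x_i) = t_i$ as in the definition of $P_M$, the idea is to replace each $x_i$ by a fresh constant $c_i$ and migrate the hypothesis $\varphi_i$ into the axiom set, reducing an open sequent about $T$ to a closed sequent about an enlarged theory $T'$.

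Concretely, I would first apply \rlem{mcf}: derivability of $\varphi \sststile{T}{V} \psi$ is equivalent to derivability of the closed sequent $\sststile{T'}{} \psi[c_1/x_1, \ldots, c_n/x_n]$ in the theory
\[ T' \;=\; T \cup \{\sststile{}{} c_i\!\downarrow \mid 1 \leq i \leq n\} \cup \{\sststile{}{} \varphi_i[c_1/x_1, \ldots, c_i/x_i] \mid 1 \leq i \leq n\}. \]
Then \rlem{morita-separated} guarantees that $T'$ has separated axioms: the fresh downarrow sequents extend $\mathcal{A}_d$ and the fresh equations extend $\mathcal{A}_e$, while $\mathcal{A}'_d$ is extended only by the trivial sequents for the $c_i$. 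Applying \rlem{der-separated-closed} to $T'$ yields a derivation of $\sststile{T'}{} \psi[c_1/x_1, \ldots, c_n/x_n]$ that uses only axioms from $\mathcal{A}_d(T') \cup \mathcal{A}_e(T')$.

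Finally, I would pull this derivation back through the reverse direction of \rlem{mcf}: substitute $x_i$ for $c_i$ throughout the derivation, adjoin the variables $x_1, \ldots, x_n$ and the formula $\varphi$ on the left by weakening, and observe that uses of the fresh axiom $\sststile{}{} c_i\!\downarrow$ become instances of \axref{a1}, while uses of $\sststile{}{} \varphi_i[c_1/x_1, \ldots, c_i/x_i]$ become uses of the hypothesis $\varphi$ via \axref{nh}. The resulting derivation of $\varphi \sststile{T}{V} \psi$ then uses only axioms from $\mathcal{A}_d(T) \cup \mathcal{A}_e(T)$. The one step requiring care is checking that the equivalence of \rlem{mcf} respects restrictions on the axiom set, i.e., that its construction (cited from \cite{PHL}) introduces no axioms of $T$ beyond those used in the closed derivation; this follows from the fact that the translation is purely substitutional, so the main obstacle is bookkeeping rather than a mathematical difficulty.
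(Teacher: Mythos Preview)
Your proposal is correct and follows essentially the same approach as the paper: apply \rlem{mcf} to pass to a closed sequent in the extended theory, invoke \rlem{morita-separated} and \rlem{der-separated-closed} there, and then use the reverse direction of \rlem{mcf} to return. Your additional commentary on why the reverse translation of \rlem{mcf} does not introduce extraneous axioms is a reasonable elaboration of what the paper leaves implicit.
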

\begin{proof}
\Rlem{mcf} implies that $\sststile{}{} \psi[c_1/x_1, \ldots c_n/x_n]$ is derivable in $T \cup \mathcal{A}'$, where $\mathcal{A}' = \{ \sststile{}{} c_i\!\downarrow |\ 1 \leq i \leq n \} \cup \{ \varphi_i[c_1/x_1, \ldots c_i/x_i]\ |\ 1 \leq i \leq n \}$.
By \rlem{morita-separated} and \rlem{der-separated-closed}, this sequent is derivable from $\mathcal{A}_d \cup \mathcal{A}_e \cup \mathcal{A}'$.
\Rlem{mcf} implies that $\varphi \sststile{}{V} \psi$ is derivable from $\mathcal{A}_d \cup \mathcal{A}_e$.
\end{proof}

The previous proposition implies that the maps between theories with separated axioms corresponding to inclusions of subtheories are Morita equivalences.
This shows that if we are interested in a theory with separated axioms, then we can work with either minimal or maximal theory corresponding to it instead.
In general, we prefer to work with the latter, but sometimes it is convenient to switch to the former.

Note that relative $\I$-cell complexes are minimal theories with separated axioms.
We can define another set $\I'$ of generating cofibrations such that relative $\I'$-cell complexes are maximal theories with separated axioms.
Recall that $T_{(p_1,n_1), \ldots (p_{k+1},n_{k+1})}$ is the theory with function symbols $\sigma_i : (p_1,n_1) \times \ldots \times (p_{i-1},n_{i-1}) \to (d_{p_i},n_i)$ for every $1 \leq i \leq k$,
$\sigma_{k+1} : (p_1,n_1) \times \ldots \times (p_k,n_k) \to (p_{k+1},n_{k+1})$, and axioms $\varphi_1 \land \ldots \land \varphi_i \sststile{}{x_1, \ldots x_i} \sigma_{i+1}(x_1, \ldots x_i)\!\downarrow$ for every $1 \leq i \leq k$,
where $\varphi_j$ equals to $e_{p_j}(x_j) = \sigma_j(x_1, \ldots x_{j-1})$.
The set $\I$ consists of maps of the form $T_{l, (d_p,n)} \to T_{l, (p,n)}$.
We define $T'_{s_1, \ldots s_{k+1}}$ as $T_{s_1, \ldots s_{k+1}}$ together with the axiom $\sigma_{k+1}(x_1, \ldots x_k)\!\downarrow\ \sststile{}{x_1, \ldots x_k} \varphi_1 \land \ldots \land \varphi_k$.
Let $\I'$ be the set of maps of the form $T_{l, (d_p,n)} \to T'_{l, (p,n)}$.

Note that $\I'$ is a retract of $\I$.
Indeed, we can define a map $T'_{s_1, \ldots s_{k+1}} \to T_{s_1, \ldots s_{k+1}}$ as $\sigma_{k+1}(x_1, \ldots x_k) \mapsto \sigma_{k+1}(x_1, \ldots x_k)|_{\varphi_1 \land \ldots \land \varphi_k}$
and a map $T_{s_1, \ldots s_{k+1}} \to T'_{s_1, \ldots s_{k+1}}$ as the obvious inclusion.
Then the composite $T'_{s_1, \ldots s_{k+1}} \to T_{s_1, \ldots s_{k+1}} \to T'_{s_1, \ldots s_{k+1}}$ is the identity morphism.
Thus, the class of $\I'$-cofibrations is a subclass of $\I$-cofibrations.
Moreover, the composite $T_{s_1, \ldots s_{k+1}} \to T'_{s_1, \ldots s_{k+1}} \to T_{s_1, \ldots s_{k+1}}$ is homotopic to the identity morphism.
Hence a map has the weak lifting property with respect to $\I$ if and only if it has this property with respect to $\I'$.

It follows that there is another model structure on the category $T_I/\algtt$ in which all objects are fibrant, weak equivalences are Morita equivalences, and cofibrations are $\I'$-cofibrations.
The identity functor is a Quillen equivalence between this model structure and the model structure that we constructed in the previous section.

\subsection{Confluent theories}

In this subsection we define confluent theories and prove their properties.
First, we need to define a few notions from the theory of abstract reduction systems.
For a general introduction to this topic we refer the reader to \cite{Terese,klop-trs,ohlebusch-advanced}.
\begin{enumerate}
\item An \emph{abstract reduction system} is a set $A$ together with a binary relation $\Rightarrow$ on it.
We will denote by $\Rightarrow^*$ the reflexive transitive closure of $\Rightarrow$.
If $\Rightarrow_1$ and $\Rightarrow_2$ are some relations, then we will write $\Rightarrow_1 \Rightarrow_2$ for the following relation:
$t \Rightarrow_1 \Rightarrow_2 t'$ if and only if there is a term $s$ such that $t \Rightarrow_1 s$ and $s \Rightarrow_2 t'$.
\item An element $a$ \emph{reduces} to an element $a'$ if $a \Rightarrow^* a'$.
A \emph{reduction sequence} is a finite or infinite sequence of elements $a_i$ such that $a_0 \Rightarrow a_1 \Rightarrow a_2 \Rightarrow \ldots$.
\item Two elements $a$ and $b$ are \emph{$\Rightarrow$-equivalent} if there is a sequence of elements $a_1$, \ldots $a_n$ such that $a = a_1$, $b = a_n$,
and, for every $1 \leq i < n$, either $a_i \Rightarrow a_{i+1}$ or $a_{i+1} \Rightarrow a_i$.
\item Two elements $a$ and $b$ are \emph{joinable} if there is an element $c$ such that $a \Rightarrow^* c$ and $b \Rightarrow^* c$.
We will also say that $a$ and $b$ are joinable under $\Rightarrow$ if the reduction relation is not clear from the context.
An element $a$ is \emph{confluent} if whenever $a \Rightarrow^* b$ and $a \Rightarrow^* c$ the terms $b$ and $c$ are joinable.
The system is confluent if every element is confluent.
Equivalently, the system is confluent if every pair of $\Rightarrow$-equivalent elements is joinable.
\item An element $a$ is a \emph{normal form} if there is no element $a'$ such that $a \Rightarrow a'$.
We will write $a \Rightarrow^\nf b$ if $a \Rightarrow^* b$ and $b$ is a normal form.
We will say that an element $a$ \emph{has a normal form} (or that it is \emph{weakly normalizable}) if $a \Rightarrow^\nf b$ for some $b$.
The system is \emph{weakly normalizing} if every element has a normal form.
\item An element $a$ is \emph{strongly normalizable} if there is no infinite reduction sequence sequence starting with $a$.
The system is \emph{strongly normalizing} if all elements are.
\item A subset $A'$ of $A$ is \emph{closed} under $\Rightarrow$ if $a' \in A'$ and $a' \Rightarrow a$ implies that $a \in A'$.
\end{enumerate}

A \emph{term rewriting system} is a binary relation $R$ on the set of terms of some theory such that the following conditions hold:
\begin{enumerate}
\item If $R(t,s)$, then $\FV(s) \subseteq \FV(t)$.
\item If $R(t,s)$, then $t$ is not a variable.
\end{enumerate}
A term rewriting system $R$ is \emph{left-linear} if, for every $t$ and $s$ such that $R(t,s)$, every variable occurs in $t$ at most once.

If $R$ is a term rewriting system, then we define the relation $\Rightarrow_R$ on the set of terms as follows:
if $R(t,s)$, then $c[x \repl t[x_1 \repl t_1, \ldots x_k \repl t_k]] \Rightarrow_R c[x \repl s[x_1 \repl t_1, \ldots x_k \repl t_k]]$ for all $c$, $x_1$, \ldots $x_k$, and $t_1$, \ldots $t_k$.
Every term rewriting system has the underlying abstract reduction system $(\Term_T,\Rightarrow_R)$.

We will say that a term $t$ of a theory $T$ is \emph{defined} with respect to a pair $(\varphi,V) \in P_M$ if the sequent $\varphi \sststile{}{V} t\!\downarrow$ is derivable.
Let $\Term_{T,\varphi}^d$ be the set of defined terms with respect to the pair $(\varphi,\FV(\varphi))$.
There is an abstract reduction system $\Rightarrow_\varphi$ on the set $\Term_{T,\varphi}^d$ defined as follows.
If $\varphi = \varphi_1 \land \ldots \land \varphi_n$ and $\varphi_i$ equals to $e_{p_i}(x_i) = t_i$, then $\Rightarrow_\varphi$ consists of pairs $(c[e_{p_i}(x_i)/y],c[t_i/y])$
for every $1 \leq i \leq n$ and every term $c$ such that there is exactly one occurrence of $y$ in $c$.

Axioms of a type theory are often presented in the form of a term rewriting system.
So there is a natural choice of an abstract reduction system on the set of terms of a type theory.
We axiomatize this situation in the following definition:

\begin{defn}[directed]
Let $T$ be a theory with separated axioms.
A \emph{reduction system} on $T$ is a choice of an abstract reduction system $\Rightarrow_{T,\varphi}$ on $\Term^d_{T,\varphi}$ for every pair $(\varphi,V) \in P_M$ such that the following conditions hold:
\begin{enumerate}
\item \label{it:dir-zero} The system $\Rightarrow_{T,\varphi}$ contains $\Rightarrow_\varphi$.
\item \label{it:dir-first} For every pair of terms $t$ and $s$ such that $t \Rightarrow_{T,\varphi} s$, the sequent $\varphi \sststile{}{V} t = s$ is derivable.
\item \label{it:dir-second} For every substitution $\rho$, every term $c$, and every axiom $\psi \sststile{}{V'} t = s$ in $\mathcal{A}_e$
such that sequent $\varphi \sststile{}{V} \psi[\rho] \land c[t[\rho]/x] = c[s[\rho]/x]$ is derivable,
terms $c[t[\rho]/x]$ and $c[s[\rho]/x]$ are equivalent in the system $(\Term_{T,\varphi}^d,\Rightarrow_{T,\varphi})$.
\end{enumerate}
\end{defn}

\begin{remark}[trs-theory]
Often a reduction system on $T$ is defined as $\Rightarrow_{R,\varphi} \cup \Rightarrow_\varphi$,
where $\Rightarrow_{R,\varphi}$ is a restriction of the system $\Rightarrow_R$ to $\Term^d_{T,\varphi}$ for some term rewriting system $R$.
In this case, condition~\eqref{it:dir-zero} is automatically satisfied.
To verify condition~\eqref{it:dir-first}, it is enough to prove that, for every substitution $\rho$ and every pair of terms $t$ and $s$ such that $(t,s) \in R$,
if $\varphi \sststile{T}{V} t[\rho]\!\downarrow$, then $\varphi \sststile{T}{V} t[\rho] = s[\rho]$.
Also, it is enough to prove condition~\eqref{it:dir-second} only for $c = x$ since $a \Rightarrow_{T,\varphi} b$ implies $c[a/x] \Rightarrow_{T,\varphi} c[b/x]$.
\end{remark}

\begin{example}[dir-ax]
Let $T$ be a theory with a set of axioms $\mathcal{A}_d$ and a term rewriting system $R$.
If we define $\mathcal{A}_e$ as the set of axioms of the form $t\!\downarrow\ \sststile{}{\FV(t)} t = s$ for every $(t,s) \in R$,
then $\Rightarrow_R \cup \Rightarrow_\varphi$ is a reduction system on $T \cup \mathcal{A}_e$.
\end{example}

Let $T$ be a theory with separated axioms such that, for every axiom $\psi \sststile{}{V'} t = s$ in $\mathcal{A}_e$, the following conditions hold:
\begin{itemize}
\item The term $t$ is not a variable and $\FV(s) \subseteq \FV(t)$.
\item For every pair $(\varphi,V) \in P_M$ and every substitution $\rho$, if $\varphi \sststile{T}{V} t[\rho]\!\downarrow$, then $\varphi \sststile{T}{V} t[\rho] = s[\rho]$.
\end{itemize}
Then we can define a term rewriting system $R$ as the set of pairs $(t,s)$ such that there is an axiom of the form $\psi \sststile{}{V'} t = s$ in $\mathcal{A}_e$.
The first condition implies that this is indeed a term rewriting system and the second condition implies that it is a reduction system on $T$.
We will say that $T$ \emph{has directed axioms} if these condition hold.
Most of the theories are presented in this way, so we do not need to specify a term rewriting system explicitly.
The theory constructed in \rexample{dir-ax} has directed axioms.

Now let us prove a technical lemma which shows that a sequent $\varphi \sststile{}{V} t = s$ is provable in $T$ if and only if
terms $t$ and $s$ are equivalent in the term rewriting system consisting of the right hand sides of the axioms of $T$ and equalities in $\varphi$.

\begin{lem}[der-eq]
If a sequent $\varphi \sststile{}{V} t = s$ is derivable in a theory $T$, then there exist terms $t_1, \ldots t_n$ such that $t = t_1$, $s = t_n$, and, for every $1 \leq i < n$, $t_i = c[a/x]$ and $t_{i+1} = c[b/x]$
for some terms $a$, $b$, and $c$ such that there is a unique occurrence of the variable $x$ in $c$ and one of the following conditions hold:
\begin{enumerate}
\item There exists an application of \axref{na} in which the premise is derivable from $\varphi$ and the conclusion is either $\varphi \sststile{}{V} a = b$ or $\varphi \sststile{}{V} b = a$.
Moreover, a derivation of $\varphi \sststile{}{V} t = s$ in the natural deduction system contains a derivation of this conclusion as a subderivation.
\item $\varphi = \varphi_1 \land \ldots \land \varphi_k$ and there exists $j$ such that $\varphi_j$ equals to either $a = b$ or $b = a$.
\end{enumerate}
\end{lem}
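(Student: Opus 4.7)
The plan is to prove the statement by induction on the derivation of $\varphi \sststile{}{V} t = s$ in the natural deduction system. Only the rules \axref{nh}, \axref{ns}, \axref{nl}, and \axref{na} can produce an equality as conclusion; the rules \axref{nv}, \axref{np}, and \axref{nf} yield definedness assertions, so they do not need to be treated.

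The base cases are immediate. If the derivation ends with \axref{nh} and some conjunct $\varphi_j$ of $\varphi$ equals $t = s$ or $s = t$, take the two-element chain $t_1 = t$, $t_2 = s$ with $c = x$, $a = t$, $b = s$; this falls under the second clause of the conclusion. If the derivation ends with an \axref{na} application whose conclusion is exactly $\varphi \sststile{}{V} t = s$ (so that the underlying axiom has an equation in its right-hand side), take the same two-element chain; the current \axref{na} application is its own witnessing subderivation, giving the first clause. The case of \axref{ns}, deriving $\varphi \sststile{}{V} t = s$ from $\varphi \sststile{}{V} s = t$, is handled by applying the induction hypothesis to the premise and reversing the resulting chain, which is allowed because the lemma permits the witnessing equality at each step to be oriented either way.

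The main work lies in the \axref{nl} case. The rule concludes $\varphi \sststile{}{V} \psi[b/x]$ from $\varphi \sststile{}{V} a = b$ and $\varphi \sststile{}{V} \psi[a/x]$; for the conclusion to be an equality we may assume $\psi$ is an atomic equality $u = v$, and the nontrivial subcase is when $x$ is free in $\psi$ (if not, the induction hypothesis applied to the second premise already suffices). Apply the induction hypothesis to $\varphi \sststile{}{V} a = b$ to obtain a chain $a = a_0, a_1, \ldots, a_m = b$, and to $\varphi \sststile{}{V} u[a/x] = v[a/x]$ to obtain a chain $u[a/x] = s_0, s_1, \ldots, s_r = v[a/x]$. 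Then build the required chain from $u[b/x]$ to $v[b/x]$ in three segments: first rewrite $u[b/x]$ to $u[a/x]$ by processing the occurrences of the substituted $b$ one position at a time, each occurrence being walked back to $a$ through the reversal of the $a_0, \ldots, a_m$ chain; concatenate the chain $s_0, \ldots, s_r$; then rewrite $v[a/x]$ to $v[b/x]$ analogously. Each individual step still rewrites a single subterm, and the enclosing context obtained by wrapping the partially substituted $u$ or $v$ around a single-hole context $c_i$ inherited from the sub-chain still has a unique occurrence of the distinguished variable.

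The main obstacle is verifying that both witness clauses persist under this context-wrapping. For a step inherited from the sub-chain for $a = b$, the pair of rewritten subterms and any witnessing \axref{na} subderivation are unchanged, and because every subderivation of a premise is also a subderivation of the concluding \axref{nl} inference, the first clause continues to hold; a step inherited via the second clause keeps its witnessing conjunct of $\varphi$. Once this bookkeeping is in place the induction closes.
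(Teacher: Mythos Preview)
Your proof is correct and follows essentially the same approach as the paper's: induction on the natural deduction derivation, with the \axref{nh} and \axref{na} cases giving two-element chains, \axref{ns} reversing the chain, and \axref{nl} handled by a three-segment concatenation built from the inductive chains for the two premises. The only cosmetic difference is that the paper first reduces the \axref{nl} case to the situation where the substituted variable occurs exactly once in $\psi$ (observing that the general rule is derivable from this special case), whereas you treat multiple occurrences directly by processing them one position at a time; both routes yield the same chain.
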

Moreover, the sequent $\varphi \sststile{}{V} t_i\!\downarrow$ is derivable for every $1 \leq i \leq n$.
\begin{proof}
We prove this by induction on a derivation of $\varphi \sststile{}{V} t = s$ in the natural deduction system.
The rules \axref{nv}, \axref{np}, and \axref{nf} are obvious.
The rules \axref{nh} and \axref{na} follow immediately from assumptions.
We can take $t_1 = a = t$, $t_2 = b = s$, and $c = x$.
Let us consider the rule \axref{ns}.
If $t_1$, \ldots $t_n$ is a sequence for $\varphi \sststile{}{V} s = t$, then we can take the sequence $t_n$, \ldots $t_1$ for $\varphi \sststile{}{V} t = s$.

Finally, let us consider the rule \axref{nl}:
\begin{center}
\AxiomC{$\varphi \sststile{}{V} p = q$}
\AxiomC{$\varphi \sststile{}{V} t'[p/y] = s'[q/y]$}
\RightLabel{\axref{nl}}
\BinaryInfC{$\varphi \sststile{}{V} t'[q/y] = s'[q/y]$}
\DisplayProof
\end{center}
Note that we may assume that there is a unique occurrence of the variable $y$ in $\psi$ since the general rule follows from this special case.
Let $t_1$, \ldots $t_n$ be a sequence for $\varphi \sststile{}{V} p = q$ and let $s_1$, \ldots $s_m$ be a sequence for $\varphi \sststile{}{V} t'[p/y] = s'[p/y]$.
Then $t'[t_n/y]$, \ldots $t'[t_1/y] = s_1$, \ldots $s_m = s'[t_1/y]$, \ldots $s'[t_n/y]$ is a sequence for $\varphi \sststile{}{V} t'[q/y] = s'[q/y]$.

Let us prove that $\varphi \sststile{}{V} t_i\!\downarrow$ is derivable by induction on $i$.
This is true for $i = 1$ by assumption.
Suppose that this is true for some $i$.
Then $t_i = c[a/x]$ and $t_{i+1} = c[b/x]$ and the sequent $\varphi \sststile{}{V} a = b$ is derivable.
Then \axref{nl} implies that the sequent $\varphi \sststile{}{V} t_{i+1}\!\downarrow$ is derivable.
\end{proof}

The following proposition is the main property of theories with reduction systems:

\begin{prop}[conf-main]
Let $T$ be a theory with a reduction system and let $(\varphi,V)$ be a pair in $P_M$.
Then a sequent $\varphi \sststile{}{V} t = s$ is derivable if and only if the terms $t$ and $s$ are equivalent in the system $(\Term_{T,\varphi}^d,\Rightarrow_{T,\varphi})$.
\end{prop}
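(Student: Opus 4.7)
The plan is to prove the two directions separately, with the nontrivial work confined to the ``only if'' direction. For the ``if'' direction, suppose that $t$ and $s$ are $\Rightarrow_{T,\varphi}$-equivalent via a witnessing sequence $t = t_0, t_1, \ldots, t_n = s$ in $\Term^d_{T,\varphi}$. Condition \eqref{it:dir-first} of \rdefn{directed} ensures that each single step yields a derivable sequent $\varphi \sststile{}{V} t_i = t_{i+1}$ (using \axref{ns} for steps taken in the reverse direction), and chaining these together by transitivity produces the required sequent $\varphi \sststile{}{V} t = s$.

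For the ``only if'' direction, my first move would be to invoke \rprop{der-separated} to reduce derivability in $T$ to derivability using only axioms from $\mathcal{A}_d \cup \mathcal{A}_e$. Next I apply \rlem{der-eq} to this restricted derivation, extracting a sequence $t = t_1, \ldots, t_n = s$ with, for each $i$, a decomposition $t_i = c_i[a_i/x]$ and $t_{i+1} = c_i[b_i/x]$ for some single-hole context $c_i$ and terms $a_i, b_i$. The same lemma guarantees $\varphi \sststile{}{V} t_i\!\downarrow$, so every $t_i$ belongs to $\Term^d_{T,\varphi}$, and moreover each intermediate equality $\varphi \sststile{}{V} c_i[a_i/x] = c_i[b_i/x]$ is derivable. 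It then suffices to show that each consecutive pair is $\Rightarrow_{T,\varphi}$-equivalent.

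The two cases of \rlem{der-eq} correspond to the two ways a step can arise. If $a_i = b_i$ appears (up to symmetry) as one of the conjuncts $\varphi_j$ of $\varphi$, then by the description of $P_M$ and of $\Rightarrow_\varphi$ this step is a single application of $\Rightarrow_\varphi$, and hence lies in $\Rightarrow_{T,\varphi}$ by condition \eqref{it:dir-zero}. If instead $a_i = b_i$ is the conclusion of an \axref{na}-application whose premises are derivable from $\varphi$, then the axiom involved cannot lie in $\mathcal{A}_d$ (whose conclusions are of the form $\sigma(\ldots)\!\downarrow$), so it must lie in $\mathcal{A}_e$; reading off a substitution $\rho$ and an axiom $\psi \sststile{}{V'} t' = s'$ with $\{a_i, b_i\} = \{t'[\rho], s'[\rho]\}$, condition \eqref{it:dir-second} of \rdefn{directed} applies directly (its hypotheses being exactly what the \axref{na}-application plus our intermediate derivable equality provide) and yields the desired $\Rightarrow_{T,\varphi}$-equivalence of $t_i$ and $t_{i+1}$. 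The main subtle point, and the step on which the argument really hinges, is the appeal to \rprop{der-separated}: without it, equality conclusions coming from $\mathcal{A}'_d$-axioms (for instance $\ty(x) = \ty(y)$ from $\Id(x,y)\!\downarrow$) would also need to be accommodated, and \rdefn{directed} imposes no direct control over such rules; eliminating $\mathcal{A}'_d$ from the derivation is precisely what makes the rest of the case analysis mechanical.
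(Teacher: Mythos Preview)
Your proof is correct and follows essentially the same route as the paper: both directions are handled the same way, combining \rprop{der-separated} with \rlem{der-eq} and then dispatching the two cases via conditions~\eqref{it:dir-zero} and~\eqref{it:dir-second} of \rdefn{directed}. One tiny remark: your dismissal of $\mathcal{A}_d$-axioms is not literally accurate since $\sigma(\ldots)\!\downarrow$ abbreviates $\sigma(\ldots) = \sigma(\ldots)$, so such an axiom \emph{can} appear in case~1 of \rlem{der-eq}; but then $a_i = b_i$ and the step is trivial, so nothing is lost.
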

\begin{proof}
If $t$ and $s$ are equivalent in $(\Term_{T,\varphi}^d,\Rightarrow_{T,\varphi})$, then there is a zig-zag of $\Rightarrow_{T,\varphi}$-reductions between them.
Since the relation $\varphi \sststile{T}{V} - = -$ is an equivalence relation on the set $\Term_{T,\varphi}^d$, we can assume that $t \Rightarrow_{T,\varphi} s$.
Then condition~\eqref{it:dir-first} of \rdefn{directed} implies that $\varphi \sststile{T}{V} t = s$.

If $t$ and $s$ are terms such that $\varphi \sststile{T}{V} t = s$, then \rlem{der-eq} and \rprop{der-separated} imply that
there exists a sequence $t_1$, \ldots $t_n$ of elements of $\Term^d_{T,\varphi}$ such that $t = t_1$, $s = t_n$, and, for every $1 \leq i < n$,
either $t_i \Rightarrow_\varphi t_{i+1}$ or $t_{i+1} \Rightarrow_\varphi t_i$ or there is an axiom $\psi \sststile{}{V'} a = b$ in $\mathcal{A}_e$, a substitution $\rho$, and a term $c$ such that
the sequent $\varphi \sststile{}{V} \psi[\rho]$ is derivable and $t_i = c[a[\rho]/y]$ and $t_{i+1} = c[b[\rho]/y]$ (or vice versa).
Condition~\eqref{it:dir-second} of \rdefn{directed} implies that $t_i$ and $t_{i+1}$ are equivalent in the system $(\Term_{T,\varphi}^d,\Rightarrow_{T,\varphi})$.
\end{proof}

\begin{cor}[conf-main]
Let $T$ be a theory with a reduction system and let $(\varphi,V)$ be a pair in $P_M$.
Then the system $(\Term_{T,\varphi}^d,\Rightarrow_{T,\varphi})$ is confluent if and only if any pair of terms $t$ and $s$ such that $\varphi \sststile{T}{V} t = s$ is joinable in this system.
\end{cor}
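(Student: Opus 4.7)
The plan is to deduce the corollary directly from \rprop{conf-main}, using only standard manipulations of the equivalence relation $\varphi \sststile{T}{V} - = -$ and the abstract Church--Rosser type reasoning.

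For the ``only if'' direction, suppose the reduction system $(\Term_{T,\varphi}^d,\Rightarrow_{T,\varphi})$ is confluent and let $t,s$ satisfy $\varphi \sststile{T}{V} t = s$. By \rprop{conf-main}, $t$ and $s$ are $\Rightarrow_{T,\varphi}$-equivalent, i.e.\ connected by a finite zig-zag of reductions. A standard induction on the length of such a zig-zag shows that in a confluent system any two equivalent elements are joinable: the base case is trivial, and for the inductive step one joins the two consecutive peaks produced by the zig-zag using confluence and composes the resulting common reducts. Hence $t$ and $s$ are joinable.

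For the ``if'' direction, assume that every pair $t,s$ with $\varphi \sststile{T}{V} t = s$ is joinable, and let $a \in \Term^d_{T,\varphi}$ with $a \Rightarrow^*_{T,\varphi} b$ and $a \Rightarrow^*_{T,\varphi} c$. By condition~\eqref{it:dir-first} of \rdefn{directed}, each single reduction step preserves provable equality with respect to $\varphi$, so by induction on the length of the reduction sequences we obtain $\varphi \sststile{T}{V} a = b$ and $\varphi \sststile{T}{V} a = c$. Using symmetry (rule \axref{ns}) and transitivity (rule \axref{b2}, or equivalently \axref{nl} applied to $\psi(x) \equiv (b = x)$), we conclude $\varphi \sststile{T}{V} b = c$. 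The hypothesis then yields a common reduct of $b$ and $c$, proving that $a$ is confluent. Since $a$ was arbitrary, the whole system is confluent.

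The only mildly subtle point is that both terms appearing along the way must lie in $\Term^d_{T,\varphi}$ so that the relation $\Rightarrow_{T,\varphi}$ is defined on them; but this is automatic since $\Rightarrow_{T,\varphi}$ is by definition a relation on $\Term^d_{T,\varphi}$, and the second part of \rlem{der-eq} guarantees that intermediate terms in any zig-zag remain defined. I expect no real obstacle: the corollary is essentially the Church--Rosser reformulation of the proposition, transported across the equivalence ``provably equal $\Leftrightarrow$ $\Rightarrow_{T,\varphi}$-equivalent'' established in \rprop{conf-main}.
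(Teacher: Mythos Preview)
Your argument is correct and is exactly the intended one: the paper states the corollary without proof, treating it as an immediate consequence of \rprop{conf-main} together with the standard Church--Rosser fact that confluence is equivalent to joinability of all $\Rightarrow$-equivalent pairs. Your two directions spell out precisely this reasoning, so nothing needs to change.
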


Note that if $\Rightarrow_{T,\varphi}$ is defined as $\Rightarrow_R \cup \Rightarrow_\varphi$ for some confluent term rewriting system $R$,
in general this does not imply the confluence of $(\Term_{T,\varphi}^d,\Rightarrow_{R,\varphi})$.
Nevertheless, this is often true under some additional assumptions:

\begin{lem}[morita-conf]
Let $T$ be a theory with a reduction system and let $(\varphi,V)$ be a pair in $P_M$.
Let $R$ be a left-linear term rewriting system.
Suppose that the following conditions hold:
\begin{itemize}
\item The abstract reduction system $(\Term_{T,\varphi}^d,\Rightarrow_R)$ is confluent.
\item For every reduction rule $(t,s) \in R$, if $t$ contains a subterm of the form $e_p(t')$, then $t'$ is not a variable.
\end{itemize}
Then the abstract reduction system $(\Term_{T,\varphi}^d, \Rightarrow_R \cup \Rightarrow_\varphi)$ is confluent.
\end{lem}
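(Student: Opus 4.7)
The plan is to apply the Hindley--Rosen lemma: if two relations on the same set are each confluent and commute with each other (i.e.\ $\Rightarrow_1^* \circ \Leftarrow_2^* \subseteq \Leftarrow_2^* \circ \Rightarrow_1^*$), then their union is confluent. I apply this to $\Rightarrow_R$ and $\Rightarrow_\varphi$ viewed as relations on $\Term_{T,\varphi}^d$ (which is closed under each by condition~\eqref{it:dir-first} of \rdefn{directed} together with \axref{nl}). Confluence of $\Rightarrow_R$ is given. For $\Rightarrow_\varphi$, the rewrite rules have left-hand sides $e_{p_i}(x_i)$ whose immediate argument is a variable and whose variables $x_1, \ldots, x_n$ are pairwise distinct; the system is thus left-linear and has no critical pairs, so by the standard orthogonality theorem $\Rightarrow_\varphi$ is confluent.

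The remaining task is commutation. I would establish the local form: whenever $u \Rightarrow_R v$ via a rule $(t,s) \in R$ at a position $p_1$ with substitution $\rho$ and $u \Rightarrow_\varphi w$ via $e_{p_i}(x_i) \to t_i$ at a position $p_2$, there exists $z$ with $v \Rightarrow_\varphi^* z$ and $w \Rightarrow_R^* z$. The case analysis is on the relative positions of $p_1$ and $p_2$. Disjoint positions commute trivially. When $p_2$ lies strictly below $p_1$, the $\varphi$-redex cannot align with any non-variable subterm of the pattern $t$, since otherwise $t$ would contain $e_{p_i}(y)$ with $y[\rho] = x_i$, forcing $y$ to be a variable and contradicting the second hypothesis; hence the $\varphi$-step is confined to some substitution instance $\rho(y)$, and left-linearity of $R$ lets us close the diagram by performing one $\Rightarrow_\varphi$-step at each (possibly repeated) occurrence of $\rho(y)$ in $s[\rho]$. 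When $p_1$ lies at or below $p_2$, the only non-variable subterm of $e_{p_i}(x_i)$ is the whole term, so the $R$-redex must sit at $p_2$ and $t[\rho] = e_{p_i}(x_i)$; since $t$ is not a variable, $t = e_p(t')$ with $t'$ a variable mapped to $x_i$ by $\rho$, again contradicting the second hypothesis. Local commutation is then lifted to commutation of the reflexive transitive closures by a standard diamond-chasing induction, and Hindley--Rosen produces the desired confluence of $\Rightarrow_R \cup \Rightarrow_\varphi$.

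The main obstacle is the case analysis above, in particular verifying that the two structural hypotheses are precisely what is required to eliminate every overlap between an $R$-redex and a $\varphi$-redex: left-linearity is needed to close the diamond when a $\varphi$-step occurs inside a duplicated variable instance, while the condition that no left-hand side of $R$ has $e_p$ applied to a variable is exactly what rules out genuine critical pairs between the two systems.
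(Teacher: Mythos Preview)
Your proof is correct and follows essentially the same route as the paper. The paper's proof is simply the one-liner: regard the variables in $V$ as constants, observe that $R$ and $\Rightarrow_\varphi$ are then two left-linear term rewriting systems which are orthogonal to each other (the second hypothesis rules out critical pairs between them) and each confluent, and invoke the theorem of Raoult--Vuillemin (also in Ohlebusch, Theorem~8.6.35) that the union of confluent, pairwise-orthogonal, left-linear systems is confluent. Your Hindley--Rosen argument with the explicit position analysis is exactly how that cited theorem is proved, so you have unpacked the black box rather than taken a different route.

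One small remark on presentation: you state the local commutation as $v \Rightarrow_\varphi^* z$ and $w \Rightarrow_R^* z$, but your case analysis actually yields the stronger form $w \Rightarrow_R^{\leq 1} z$ in every case. This matters, because naive local commutation with $*$ on both sides does \emph{not} in general lift to commutation of the transitive closures by diamond-chasing alone; you need strong commutation (bounded on one side) or a termination hypothesis. Since your cases do give the strong form, the lifting goes through, but it would be worth saying so explicitly.
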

\begin{proof}
We can think of variables in $V$ as additional constants.
Then $\Rightarrow_{R,\varphi}$ is the union of two confluent term rewriting systems $R$ and $\Rightarrow_\varphi$.
The last condition implies that they are orthogonal to each other.
It was shown in \cite{raoult} (see also \cite[Theorem~8.6.35]{ohlebusch-advanced}) that the union of confluent orthogonal left-linear systems is confluent.
\end{proof}

\begin{defn}[confluent]
A \emph{confluent} type theory is a type theory $T$ with a reduction system such that equivalent conditions of \rcor{conf-main} hold for every pair $(\varphi,V)$.
\end{defn}

\section{Examples}
\label{sec:examples}

In this section we construct several examples of Morita equivalences and describe other applications of results of this paper.

\subsection{Simple examples}
\label{sec:simple}

In this subsection we consider maps of the form $f : T \to T \cup \mathcal{A}$, where $\mathcal{A}$ is a set of axioms.

\begin{prop}[ext-morita]
Let $\mathcal{A}$ be a set of sequents in a theory $T$.
Suppose that, for every axiom $\psi \sststile{}{V'} \chi$ in $\mathcal{A}$, every pair $(\varphi,V) \in P_M$, and every substitution $\rho$,
the sequent $\varphi \sststile{}{V} \chi[\rho]$ is derivable in $T$ whenever $\varphi \sststile{}{V} \psi[\rho]$ is.
Then, for every pair $(\varphi,V)$ in $P_M$, if a sequent $\varphi \sststile{}{V} \psi$ is derivable in $T \cup \mathcal{A}$, then it is also derivable in $T$.
In particular, the map $T \to T \cup \mathcal{A}$ is a Morita equivalence.
\end{prop}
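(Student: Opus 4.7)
The plan is to split the proposition into two parts: first, prove the conservativity statement that any sequent $\varphi \sststile{}{V} \psi$ derivable in $T \cup \mathcal{A}$ (with $(\varphi,V) \in P_M$) is already derivable in $T$; second, deduce the Morita equivalence claim from this by invoking \rprop{morita-char}.

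For the conservativity part, I would work in the natural deduction system and induct on the derivation of $\varphi \sststile{T \cup \mathcal{A}}{V} \psi$, holding the fixed pair $(\varphi,V) \in P_M$ throughout. Rules \axref{nv}, \axref{ns}, \axref{nh}, \axref{nl}, \axref{np}, and \axref{nf} propagate trivially since they do not reference the set of axioms. The only substantive case is the axiom rule \axref{na}. If the axiom applied lies in $T$, then the same application produces a derivation in $T$. If the axiom $\psi_1 \land \ldots \land \psi_n \sststile{}{x_1, \ldots, x_k} \chi_1 \land \ldots \land \chi_m$ lies in $\mathcal{A}$ and is applied with substitution $\rho = [t_1/x_1, \ldots, t_k/x_k]$, then by induction each $\varphi \sststile{T}{V} t_i\!\downarrow$ and each $\varphi \sststile{T}{V} \psi_i[\rho]$ is derivable in $T$; combining them yields $\varphi \sststile{T}{V} (\psi_1 \land \ldots \land \psi_n)[\rho]$, so the hypothesis of the proposition applied to this axiom and this substitution gives $\varphi \sststile{T}{V} \chi_j[\rho]$, as needed.

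For the Morita equivalence part, I would apply \rprop{morita-char}: it suffices to check that $f : T \to T \cup \mathcal{A}$ has the weak lifting property with respect to $P_M$. The key observation is that $f$ adds no function or predicate symbols, so $T$ and $T \cup \mathcal{A}$ have literally the same raw terms. Given $(\varphi,V) \in P_M$, a term $A \in \Term_T(V)_{(d_p,n)}$ with $\varphi \sststile{T}{V} A\!\downarrow$, and a term $a \in \Term_{T \cup \mathcal{A}}(V)_{(p,n)}$ with $f(\varphi) \sststile{T \cup \mathcal{A}}{V} e_p(a) = f(A)$, I take $a' := a$ viewed as a term of $T$. By the conservativity part, $\varphi \sststile{T}{V} e_p(a) = A$ is derivable in $T$, which in particular ensures $\varphi \sststile{T}{V} a\!\downarrow$. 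Since $f(a') = a$ on the nose, the required relative homotopy is provided by reflexivity (for $p = \tm$ by $\refl(a)$, and for $p = \ty$ by the trivial self-equivalence of $A$, both available because the setting is under $\IdT_0$).

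The main obstacle will be the bookkeeping in the \axref{na} case: one must confirm that the substitution $\rho$ arising from the derivation and the fixed pair $(\varphi,V) \in P_M$ together satisfy the exact shape demanded by the hypothesis on $\mathcal{A}$, and that the conjunction of individually derivable premises $\varphi \sststile{T}{V} \psi_i[\rho]$ genuinely yields a derivation of $\varphi \sststile{T}{V} (\psi_1 \land \ldots \land \psi_n)[\rho]$ in the natural deduction setup. Since $(\varphi,V)$ is held fixed and the hypothesis is stated uniformly over all substitutions, this should reduce to invoking rule \axref{b6} (or its natural deduction analogue) without further complication.
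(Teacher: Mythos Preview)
Your proposal is correct and matches the paper's approach exactly --- the paper's entire proof is the single phrase ``Obvious induction on the derivation of $\varphi \sststile{}{V} \psi$,'' and you have supplied precisely the details that induction requires, including the explicit treatment of the \axref{na} case and the deduction of the Morita equivalence via \rprop{morita-char}. One small slip: the theory $\IdT_0$ only provides the $\Id$ type former, not $\refl$ or the trivial self-equivalence, so ``available because the setting is under $\IdT_0$'' is not quite right; however this is harmless, since your conservativity argument actually establishes the strict lifting property ($f(a') = a$ on the nose), and the ambient theories in this section carry more structure than $\IdT_0$ anyway.
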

\begin{proof}
Obvious induction on the derivation of $\varphi \sststile{}{V} \psi$.
\end{proof}

\begin{example}
We already saw examples of such a Morita equivalence in \rprop{der-separated}.
This proposition implies that the map $T \to T \cup \mathcal{A}_d'$ is a Morita equivalence for every theory $T$ with separated axioms.
It also has the following implication.
Suppose that we want to extend $T$ with a typing axiom of the following form:
\[ \psi \sststile{}{V} e_p(\sigma(x_1, \ldots x_k)) = A \]
There are two natural choices for the formula $\psi$: $\varphi_\sigma$ and $\sigma(x_1, \ldots x_k)\!\downarrow$.
Let $T_1 = T \cup \{ \varphi_\sigma \sststile{}{V} e_p(\sigma(x_1, \ldots x_k)) = A \}$ and $T_2 = T \cup \{ \sigma(x_1, \ldots x_k)\!\downarrow\ \sststile{}{V} e_p(\sigma(x_1, \ldots x_k)) = A \}$.
Then the obvious map $T_1 \to T_2$ is a Morita equivalence.
Indeed, if a sequent $\varphi \sststile{}{V} \psi$ is derivable in $T_2$, then it is also derivable in $T_2 \cup \mathcal{A}_d'$.
Since theories $T_1 \cup \mathcal{A}_d'$ and $T_2 \cup \mathcal{A}_d'$ have the same theorems, it is also derivable in $T_1 \cup \mathcal{A}_d'$.
By \rprop{der-separated}, it is also derivable in $T_1$.
\end{example}

\begin{example}
Let $T_\Pi$ be the theory of $\Pi$-types.
One of the axioms (beta reduction) of this theory looks like this:
\[ \ft(B) = A \land \ty(b) = B \land \ty(a) = A \sststile{}{A,B,a,b} \app(A,B,\lambda(A,b),a) = b[a] \]
If we replace this axiom with the following one, then we obtain a new theory which we will denote by $T_\Pi'$.
\[ \app(A,B,\lambda(A,b),a)\!\downarrow\ \sststile{}{A,B,a,b} \app(A,B,\lambda(A,b),a) = b[a] \]

The formula $\app(A,B,\lambda(A,b),a)\!\downarrow$ is equivalent to $\ft(B) = A \land \Pi(A,\ty(b)) = \Pi(A,B) \land \ty(a) = A$.
Thus, $\ft(B) = A \land \ty(b) = B \land \ty(a) = A$ implies it, but not vice versa.
We want to show that the obvious map $T_\Pi \to T_\Pi'$ is a Morita equivalence.
It is a folklore result that the ordinary type theory with $\Pi$-types is confluent.
This does not imply confluence of $T_\Pi'$ immediately since terms of this theory differ from terms of ordinary type theory.
It is possible to prove confluence of $T_\Pi'$, but this proof is beyond the scope of this paper, so we will simply assume that $T_\Pi'$ is confluent.

Let us show that the condition of \rprop{ext-morita} holds.
Suppose that $\varphi \sststile{}{V} \ft(B) = A \land \Pi(A,\ty(b)) = \Pi(A,B) \land \ty(a) = A$ is derivable in $T_\Pi'$.
Since $T_\Pi'$ is confluent and there are no reductions of the form $\Pi(A,B) \Rightarrow t$, it follows that $\ty(b) \Rightarrow B$.
Hence, $\varphi \sststile{}{V} \ty(b) = B$.
This shows that the condition of \rprop{ext-morita} holds.

This also implies that $T_\Pi$ is confluent.
Indeed, the conditions of \rdefn{directed} and \rdefn{confluent} involve only sequents of the form $\varphi \sststile{}{V} \psi$ where $(\varphi,V) \in P_M$
and \rprop{ext-morita} implies that such a sequent is derivable in $T_\Pi$ if and only if it is derivable in $T_\Pi'$.
Moreover, the underlying term rewriting systems of $T_\Pi$ and $T_\Pi'$ coincide.
These facts imply that $T_\Pi$ is confluent if and only if $T_\Pi'$ is confluent.
\end{example}

\begin{example}
We can formulate the beta reduction axiom in one of the following ways:
\begin{align*}
\ft(B) = A \land A = A' \land \ty(b) = B \land \ty(a) = A & \sststile{}{A,A',B,a,b} \app(A,B,\lambda(A',b),a) = b[a] \\
\app(A,B,\lambda(A',b),a)\!\downarrow & \sststile{}{A,A',B,a,b} \app(A,B,\lambda(A',b),a) = b[a]
\end{align*}
Let us denote the theory with the former axiom by $T_\Pi''$ and the theory with the latter by $T_\Pi'''$.
Then we have the following commutative diagram of theories:
\[ \xymatrix{ T_\Pi  \ar[r] \ar[d]  & T_\Pi'' \ar[d] \\
              T_\Pi' \ar[r]         & T_\Pi'''
            } \]
The top arrow is actually an isomorphism and we can prove that the two remaining arrows are Morita equivalences using \rprop{ext-morita} in the same way as we did this for the arrow $T_\Pi \to T_\Pi'$.
Note that even though theories $T_\Pi$ and $T_\Pi''$ are isomorphic they differ as theories with directed axioms.
In particular, the underlying term rewriting systems of $T_\Pi$ and $T_\Pi''$ differ.
The latter is left-linear and this is the main reason why we might be interested in this theory.
\end{example}

We can summarize results of this subsection as follows.
There are several ways to defined a theory of $\Pi$-types, but they are all Morita equivalent.
Also, similar results can be proved for other theories such as the theory of $\Sigma$-types or the theory of identity types.

\subsection{Contractible types}
\label{sec:contr}

The notion of a contractible type was defined by Vladimir Voevodsky \cite{unimath}.
The theory of the contractible type can be formulated in several different ways.
In this subsection we will prove that some of them are Morita equivalent.
The first theory that we will consider is the simplest definition of a contractible type:
\begin{center}
\AxiomC{}
\UnaryInfC{$\Gamma \vdash c_0 : C$}
\DisplayProof
\qquad
\AxiomC{$\Gamma \vdash c : C$}
\UnaryInfC{$\Gamma \vdash \Ceq(c) : \Id(C, c_0, c)$}
\DisplayProof
\end{center}
We will denote this theory by $T_0$.

\begin{prop}
If a theory $T$ is under $T_I$, then $T \to T \amalg T_0$ is a Morita equivalence.
\end{prop}
\begin{proof}
Consider a theory $T'$ which extends $T$ with a type $\vdash C\ \type$, an equivalence $I \vdash e : C$ between $I$ and $C$, a term $\vdash c_0 : C$, and a homotopy $\vdash h : \Id(C, e[\leftI], c_0)$.
Since $C$ is equivalent to $I$ and $I$ is contractible, it follows that $C$ is also contractible.
That is, there is a term $\Ceq'(c) : \Id(c_0, c)$ in $T'$.
Let $T'' = T' \cup \{ \Ceq, h' \}$, where $\Ceq$ satisfies the same axiom as before and $h'$ satisfies the following axiom:
\begin{center}
\AxiomC{$\Gamma \vdash c : C$}
\UnaryInfC{$\Gamma \vdash h'(c) : \Id(\Id(C, c_0, c), \Ceq(c), \Ceq'(c))$}
\DisplayProof
\end{center}

The map $T \to T'$ is a trivial cofibration in the Morita model structure since it is the composition of maps $T \to T \cup \{ C, e \}$ and $T \cup \{ C, e \} \to T \cup \{ C, e, c_0, h \}$ and
these maps are pushouts of the generating trivial cofibrations.
The map $T' \to T''$ is also a trivial cofibration for the same reason.

Finally, let us show that the map $T \to T \amalg T_0$ is a retract of $T \to T''$.
The map $T_0 \to T''$ is defined in the obvious way and the map $f : T'' \to T_0 \amalg T$ is defined in such a way that $T \to T'' \xrightarrow{f} T_0 \amalg T$ equals to $T \to T \amalg T_0$.
Since $C$ and $I$ are both contractible, we can define $f(e)$ simply as $I \vdash c_0 : C$ and $f(h)$ as $\vdash \refl(c_0) : \Id(C, c_0, c_0)$.
Since $C$ is contractible, the type $\Id(\Id(C, c_0, c), \Ceq(c), f(\Ceq'(c)))$ is also contractible, so $f(h')$ can be defined as any inhabitant of this type.
Since $T \to T \amalg T_0$ is a retract of $T \to T''$ and the latter map is a Morita equivalence, it follows that the former map is also a Morita equivalence.
\end{proof}

Next, we will consider the theory of the unit type which we will denote by $T_2$:
\begin{center}
\AxiomC{}
\UnaryInfC{$\Gamma \vdash \unit : \top$}
\DisplayProof
\qquad
\AxiomC{$\Gamma \vdash t : \top$}
\UnaryInfC{$\Gamma \vdash t \deq \unit$}
\DisplayProof
\end{center}
This theory is an extension of the theory of the contractible type by the equation $\Ceq(c) = \refl(c_0)$.
Let us prove that $T \amalg T_0 \to T \amalg T_2$ is a Morita equivalence whenever $T \amalg T_2$ is confluent and satisfies some additional conditions.
First, we need to give several definitions and prove a technical lemma.
Let $t$ be a term of sort $(p,n)$, where $p \in \{ \ty, \tm \}$.
We will say that $t$ is \emph{$\ft$-free} if function symbol $\ft$ does not occur in $t$.
Then we define \emph{the set of contexts} of $t$ as the set of subterms of $t$ of sort $(\ctx,n)$ which are not proper subterms of a subterm of this sort.
In other words, if either $t = x$ or $t = \ty(x)$, then the set of contexts of $t$ is empty,
and if either $t = \sigma_m(\Gamma, t_1, \ldots t_k)$ or $t = \ty(\sigma_m(\Gamma, t_1, \ldots t_k))$, then the set of contexts of $t$ consists of $\Gamma$ and contexts of terms $t_1$, \ldots $t_k$.
We will say that $t$ is \emph{a context-normal form} if $t$ is $\ft$-free and the set of contexts is either empty or a singleton.
In the latter case the single element of the set of contexts of $t$ will be called \emph{the context of $t$}.
If $(\varphi,V)$ is a pair in $P_M$ and $t$ is a term such that $\varphi \sststile{T}{V} t\!\downarrow$, then there is a context-normal form $t'$ such that $\varphi \sststile{T}{V} t = t'$.
This term will be called \emph{the context-normal form of $t$}.

Let $T$ be a theory with a reduction system and let $(\varphi,V)$ be a pair in $P_M$.
Let $\Rightarrow_{T,\varphi}^0$ be an abstract reduction system such that $\Rightarrow_{T,\varphi}$ is the closure of $\Rightarrow_{T,\varphi}^0$ in the sense that $\Rightarrow_{T,\varphi}$ contains $\Rightarrow_{T,\varphi}^0$
and $\sigma(t_1, \ldots t_k) \Rightarrow_{T,\varphi} \sigma(t_1, \ldots t_i', \ldots t_k)$ whenever $t_i \Rightarrow_{T,\varphi} t_i'$ for all function symbols $\sigma_m$ including $v_j$, $\subst$, $\ft$, and $\ty$.
Then we can define another abstract reduction system $\Rightarrow_{T,\varphi}^c$.
Let $t$ be a $\ft$-free term.
Then we define $t \Rightarrow_{T,\varphi}^c s$ in the same way as $t \Rightarrow_{T,\varphi} s$ except for the fact that $\Rightarrow_{T,\varphi}^0$-reductions are not allowed in contexts.
We will say that $T$ \emph{preserves $\ft$-free terms} if, for all terms $t$ and $s$ such that $t \Rightarrow_{T,\varphi}^0 s$, if $t$ is $\ft$-free, then so is $s$.
If $T$ satisfies this condition, then $\Rightarrow_{T,\varphi}^c$ is a relation on $\ft$-free terms.
We will say that $T$ \emph{preserves context-normal forms} if it preserves $\ft$-free terms and, for all $\ft$-free terms $t$ and $s$ such that $t \Rightarrow_{T,\varphi}^0 s$, the set of contexts of $s$ is a subset of the set of contexts of $t$.
If $T$ satisfies it, then $\Rightarrow_{T,\varphi}^c$ is a relation on context-normal forms.

Finally, we need yet another assumption on the theory $T$.
Let $t$ be a $\ft$-free term and let $x_1$, \ldots $x_n$ be variables that occur in contexts of $t$.
Let $r_1$, \ldots $r_n$ and $s$ be terms such that $t[r_1/x_1, \ldots r_n/x_n] \Rightarrow_{T,\varphi}^0 s$.
We will say that $T$ is \emph{context-irrelevant} if, for all such $t$, $r_1$, \ldots $r_n$, and $s$, there exists a term $s'$ such that variables $x_1$, \ldots $x_n$ occur in contexts of $s'$, $s = s'[r_1/x_1, \ldots r_n/x_n]$,
and, for all terms $r_1'$, \ldots $r_n'$, it is true that $t[r_1'/x_1, \ldots r_n'/x_n] \Rightarrow_{T,\varphi}^{0*} s'[r_1'/x_1, \ldots r_n'/x_n]$.
This is a simple technical assumption on $T$ which holds for all theories that occur in practice.

\begin{lem}[context]
Let $T$ be a context-irrelevant theory with a reduction system which preserves context-normal forms and let $(\varphi,V)$ be a pair in $P_M$.
Let $t$ and $s$ be context-normal forms such that either one of them does not have a context or both of them have the same context.
If $t$ and $s$ are joinable under $\Rightarrow_{T,\varphi}$, then they are joinable under $\Rightarrow_{T,\varphi}^c$.
\end{lem}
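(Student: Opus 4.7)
The plan is to reorganize any joining reduction so that all steps taken strictly inside contexts are postponed. Let $u$ be a common reduct with $t \Rightarrow_{T,\varphi}^* u$ and $s \Rightarrow_{T,\varphi}^* u$. Classify each atomic $\Rightarrow_{T,\varphi}^0$-step in these sequences as \emph{inner} (the redex lies inside a context subterm) or \emph{outer} (otherwise). Outer steps are exactly what $\Rightarrow_{T,\varphi}^c$ allows; inner ones are the forbidden reductions. The goal is to rearrange both sequences so that outer steps come first.

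The key ingredient is a commutation lemma: an inner step followed by an outer step can be replaced by an outer sequence followed by an inner sequence. I would prove this using context-irrelevance. Write the term before the two steps as $\tilde{v}[r_1/x_1,\ldots,r_n/x_n]$, where the $x_i$ are the variables occurring in contexts of the template $\tilde{v}$. An inner step modifies some $r_i$ to $r_i'$, producing $\tilde{v}[\ldots r_i'/x_i \ldots]$. The subsequent outer step is a reduction $\tilde{v}[\ldots r_i'/x_i \ldots] \Rightarrow_{T,\varphi}^0 s$; context-irrelevance produces a template $\tilde{s}$ and a uniform reduction $\tilde{v}[q_1/x_1,\ldots,q_n/x_n] \Rightarrow_{T,\varphi}^{0*} \tilde{s}[q_1/x_1,\ldots,q_n/x_n]$ for all choices of $q_1,\ldots,q_n$, in particular for the original $r_i$. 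This gives an outer sequence $\tilde{v}[r_1/x_1,\ldots,r_n/x_n] \Rightarrow_{T,\varphi}^{0*} \tilde{s}[r_1/x_1,\ldots,r_n/x_n]$ followed by inner steps $\tilde{s}[r_1/x_1,\ldots,r_n/x_n] \Rightarrow_{T,\varphi}^{0*} \tilde{s}[\ldots r_i'/x_i \ldots] = s$. Preservation of $\ft$-freeness and of the set of contexts ensures that the inner/outer labels of the resulting steps remain meaningful after the swap.

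Iterating the commutation lemma yields a decomposition $t \Rightarrow_{T,\varphi}^{c*} u^*_t \Rightarrow_{T,\varphi}^{*} u$ in which the final segment consists entirely of inner steps, and similarly $s \Rightarrow_{T,\varphi}^{c*} u^*_s \Rightarrow_{T,\varphi}^{*} u$. Since inner steps only modify the interiors of contexts, the non-context parts of $u^*_t$ and $u^*_s$ both coincide with the non-context part of $u$. Since $\Rightarrow_{T,\varphi}^c$-steps do not reduce inside contexts, the context of $u^*_t$ is still the context of $t$, and the context of $u^*_s$ is still the context of $s$. By the hypothesis that $t$ and $s$ either share a context or one has none, this forces $u^*_t = u^*_s$ (the vacuous-context case being immediate since no inner steps arise), yielding the required joining in $\Rightarrow_{T,\varphi}^c$.

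The main obstacle is the commutation lemma itself. There are two subtleties. First, an outer reduction can have a redex that contains a context subterm as one of its arguments (for instance a rule of shape $\sigma(\Gamma,x) \Rightarrow \ldots$), so swapping it past an inner step is not a naive commutation of disjoint redexes: it genuinely needs the uniformity supplied by context-irrelevance. Second, after the swap one must verify that the new outer and inner segments remain legitimate sequences of $\Rightarrow_{T,\varphi}^0$-steps; this follows from the way context-irrelevance is formulated as a uniform rewrite across arbitrary substitutions, together with the assumption that $T$ preserves context-normal forms so that the very definitions of "inner" and "outer" are stable throughout the rearrangement.
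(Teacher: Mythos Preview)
Your proposal is correct and follows essentially the same route as the paper: both prove the commutation ``inner then outer $\Rightarrow$ outer then inner'' via context-irrelevance, iterate to get decompositions $t \Rightarrow^{c*} q_1 \Rightarrow^{d*} q$ and $s \Rightarrow^{c*} q_2 \Rightarrow^{d*} q$, and then argue that $q_1 = q_2$ because the $d$-tail touches only contexts while the $c$-prefix leaves the context unchanged. One small point: your commutation argument applies context-irrelevance as if the outer step were a root $\Rightarrow_{T,\varphi}^0$-step, whereas an outer step is a $\Rightarrow_{T,\varphi}^0$-step at an arbitrary non-context position; the paper handles the non-root case by an induction on the size of the term outside contexts, which is the routine patch your argument needs.
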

\begin{proof}
First, let us prove that if $t$ is a $\ft$-free term and $t \Rightarrow_{T,\varphi}^* s$, then $t \Rightarrow_{T,\varphi}^{c*} \Rightarrow_{T,\varphi}^{d*} s$,
where $t_1 \Rightarrow_{T,\varphi}^d t_2$ if $t_1 \Rightarrow_{T,\varphi} t_2$ and not $t_1 \Rightarrow_{T,\varphi}^c t_2$.
To prove this, it is enough to show that if $t \Rightarrow_{T,\varphi}^{d*} \Rightarrow_{T,\varphi}^c s$, then $t \Rightarrow_{T,\varphi}^{c*} \Rightarrow_{T,\varphi}^{d*} s$.
We prove this by induction on the size of $t$ without contexts.
Since $t$ cannot be a variable, we can assume that $t = \sigma_m(\Gamma, t_1, \ldots t_k)$, $t \Rightarrow_{T,\varphi}^{d*} \sigma_m(\Gamma', t_1', \ldots t_k')$, and $\sigma_m(\Gamma', t_1, \ldots t_k') \Rightarrow_{T,\varphi}^c s$.
Then either $s = \sigma_m(\Gamma', t_1', \ldots t_i'', \ldots t_k')$ and $t_i' \Rightarrow_{T,\varphi}^c t_i''$ or $\sigma_m(\Gamma', t_1, \ldots t_k') \Rightarrow_{T,\varphi}^0 s$.
In the former case, we conclude by induction hypothesis.
In the latter case, we use the fact that $T$ is context-irrelevant.
Since $t \Rightarrow_{T,\varphi}^{d*} \sigma_m(\Gamma', t_1', \ldots t_k')$, there exist terms $t'$, $r_1$, \ldots $r_n$, $r_1'$, \ldots $r_n'$ and variables $x_1$, \ldots $x_n$ which occur in contexts of $t$
such that $t = t'[r_1'/x_1, \ldots r_n'/x_n]$ and $\sigma_m(\Gamma', t_1', \ldots t_k') = t'[r_1/x_1, \ldots r_n/x_n]$.
Since $T$ is context-irrelevant, there exists a term $s'$ such that $s = s'[r_1/x_1, \ldots r_n/x_n]$ and $\sigma_m(\Gamma', t_1', \ldots t_k') \Rightarrow_{T,\varphi}^{0*} s'[r_1'/x_1, \ldots r_n'/x_n]$.
Moreover, since $r_i' \Rightarrow_{T,\varphi}^* r_i$ and variables $x_1$, \ldots $x_n$ occur in contexts of $s'$, it is true that $s'[r_1'/x_1, \ldots r_n'/x_n] \Rightarrow_{T,\varphi}^{d*} s'[r_1/x_1, \ldots r_n/x_n]$.
Thus $t \Rightarrow_{T,\varphi}^{0*} s'[r_1'/x_1, \ldots r_n'/x_n] \Rightarrow_{T,\varphi}^{d*} s$.

Now, we can prove the lemma.
If $t \Rightarrow_{T,\varphi}^* q$ and $s \Rightarrow_{T,\varphi}^* q$, then $t \Rightarrow_{T,\varphi}^{c*} q_1 \Rightarrow_{T,\varphi}^{d*} q$ and $s \Rightarrow_{T,\varphi}^{c*} q_2 \Rightarrow_{T,\varphi}^{d*} q$.
Since $T$ preserves context-normal forms, $q_1$ and $q_2$ are context normal forms and the contexts of $q_1$ and $q_2$ coincide with the context of $t$ and $s$.
Reductions $q_1 \Rightarrow_{T,\varphi}^{d*} q$ and $q_2 \Rightarrow_{T,\varphi}^{d*} q$ occur only in contexts of $q_1$ and $q_2$.
This means that the parts of terms $q_1$ and $q_2$ without contexts coincide.
Since the contexts of $q_1$ and $q_2$ are also the same, this implies that $q_1 = q_2$.
\end{proof}

Now, we are ready to formulate the main proposition of this section:
\begin{prop}[contr-main]
Let $T$ be a theory under $\coeT_1 + \sigma + \PathT$ with well-defined function symbols which satisfies the conditions of \rlem{context}.
Suppose that $T \amalg T_2$ is confluent and generated by a term rewriting system as described in \rremark{trs-theory}.
Moreover, assume that, for every reduction rule $(t_1,t_2)$ in this system, if $t_i[s_1, \ldots s_k]$ contains a proper subterm of type $C$, then it is a subterm of $s_j$ for some $j$.
\end{prop}

\begin{remark}
Conditions of \rprop{contr-main} are technical assumptions that hold in many theories that occur in practice.
\end{remark}

\begin{proof}
Clearly, the first condition of \rlem{eq-char-fib} holds for the map $T \amalg T_0 \to T \amalg T_2$.
Thus, to prove that it is a Morita equivalence, it is enough to check that it satisfies the condition of \rlem{char-type-fib}.
That is, we need to prove that, for every pair $(\varphi,V) \in P_M$ and every pair of types $\Gamma \vdash A\ \type$ and $\Gamma \vdash B\ \type$ in $T \amalg T_0$ such that $f(\Gamma) \vdash f(A) \deq f(B)$ is provable in $T \amalg T_0 \cup \varphi$,
there exists a term $\Gamma, A \vdash b : B$ such that $f(\Gamma), f(A) \vdash f(b) \deq v_0$.
It is enough to prove facts \eqref{it:contr-first} and \eqref{it:contr-second} below.
Indeed, since $T \amalg T_2$ is confluent $f(A)$ and $f(B)$ are joinable under $\Rightarrow_{T \amalg T_2, f(\varphi)}^c$ by \rlem{context}.
It follows that they are joinable under relation $\Rightarrow_{T \amalg T_2, f(\varphi)}^{cp}$ (see \eqref{it:contr-first}).
Thus, we can define $b$ as composition of terms $a_1$, $a_2$, and $b'$ defined below.
\begin{enumerate}
\item \label{it:contr-first} There is a confluent subset $\Rightarrow_{T \amalg T_2, f(\varphi)}^{cp}$ of $\Rightarrow_{T \amalg T_2, f(\varphi)}^c$ such that its transitive closure is $\Rightarrow_{T \amalg T_2, f(\varphi)}^c$ and,
for every type $\Gamma \vdash A\ \type$ of $T \amalg T_0$, if $f(A) \Rightarrow_{T \amalg T_2, f(\varphi)}^{cp} A'$, then there exist terms $A''$, $a_1$, and $a_2$ such that $f(A'') = A'$ and the following sequents are derivable:
\begin{align*}
\Gamma, A & \vdash a_1 : A'' \\
\Gamma, A'' & \vdash a_2 : A \\
f(\Gamma), f(A) & \vdash f(a_1) \deq v_0 \\
f(\Gamma), f(A'') & \vdash f(a_2) \deq v_0
\end{align*}
\item \label{it:contr-second} For every pair of types $\Gamma \vdash A'\ \type$ and $\Gamma \vdash B'\ \type$ such that $f(A') = f(B')$, there exists a term $\Gamma, A' \vdash b' : B'$ such that $f(\Gamma), f(A') \vdash f(b') \deq v_0$.
\end{enumerate}

It is easy to see that \eqref{it:contr-second} holds since $f$ is almost injective.
The only function symbols that $f$ identifies are $\Ceq$ and $\refl$.
Formally, for every pair of terms $a$ and $b$ such that $\ctx^n(a)$ and $\ctx^n(b)$ are equivalent for some $n$ and $f(a) = f(b)$, 
we define a term $h_n(a,b)$ such that $\ctx^n(h_n(a,b))$ is equivalent to $(\ctx^n(a), i : I)$, $\leftI^*(h_n(a,b))$ is equivalent to $a$, $\rightI^*(h_n(a,b))$ is equivalent to $b$, and $f(h_n(a,b))$ is a constant homotopy.
Then a term $\Gamma, A \vdash t : B$ can be defined as $\coe_0(h_0(A,B), v_0)$.
Since we have the $\sigma$ rule, $f(t)$ is equivalent to $v_0$.

If $a = b = x$, then $h_n(a,b)$ is the constant homotopy.
If $a = \sigma_m(a_1, \ldots a_k)$ and $b = \sigma_m(b_1, \ldots b_k)$, then $h_n(a,b) = \sigma_{m+1}(h_n(a_1,b_1), \ldots h_n(a_k,b_k))$.
If $a = b = v_i$ and $i < n$, then $h_n(a,b) = v_i$.
If $a = b = v_i$ and $i \geq n$, then $h_n(a,b) = v_{i+1}$.
If $a = \subst_{p,m,k}(a', a_1, \ldots a_k)$ and $b = \subst_{p,m,k}(b', b_1, \ldots b_k)$, then $h_n(a,b) = \subst_{p,m+1,k}(h_k(a',b'), v_n, h_n(a_1,b_1), \ldots h_n(a_k,b_k))$.
The only function symbols that are identified by $f$ are $\Ceq$ and $\refl$.
Thus the remaining case is when $a = \Ceq(c)$ and $b = \refl(c')$.
In this case $c = c' = c_0$ since $\Ceq(c)$ is mapped to $\refl(\unit)$ and the only function symbol which is mapped to $\unit$ is $c_0$.
Let $\Delta = \ctx(h_n(c_0,c_0))$.
It is easy to construct a term $\Delta \vdash p : \Id(\Id(C,c_0,c_0),\Ceq(c_0),\refl(c_0))$ since $C$ is contractible.
Thus we can define $h_n(a,b)$ as $\at(p,i)$.

Now, let us prove \eqref{it:contr-first}.
Let $\Gamma \vdash A\ \type$ be a type of $T \amalg T_0$ such that $f(A) \Rightarrow_{T \amalg T_2, f(\varphi)}^c A'$.
We will prove that there is a type $\Gamma \vdash A''\ \type$ such that $f(A'') = A'$ and a homotopy $\Gamma, i : I \vdash H \type$ between $A$ and $A''$ such that $f(H)$ is the constant homotopy.

For every $\ft$-free term $t$ of $T \amalg T_0$ such that $\ctx^n(t) = \Gamma$, we define terms $g_n(t)$ and $h_n(t)$ such that $\ctx^n(g_n(t)) = \Gamma$ and $\ctx^n(h_n(t)) = (\Gamma, i : I)$
by replacing every subterm $c$ (which is not in a context of $t$) of type $C$ with $c_0$ and $\at(\Ceq(c),i)$, respectively.
Formally, we define $g_n(t)$ and $h_n(t)$ by induction on $t$.
Let us give the definition of $h_n(t)$; $g_n(t)$ is defined similarly.
If $\Gamma, \Delta \vdash t : C$, then $h_n(t) = \Gamma, i : I, h_n(\Delta) \vdash \at(\Ceq(c),i) : C$.
If $t = x$, $t = v_i$, or $t$ has the same sort as $\Gamma$, then $h_n(t)$ is the weakening of $t$.
If $t = \sigma_m(t_1, \ldots t_k)$, then $h_n(t) = \sigma_{m+1}(h_n(t_1), \ldots h_n(t_k))$.
If $t = \subst_{p,m,k}(t, t_1, \ldots t_k)$, then $h_n(t) = \subst_{p,m+1,k+1}(h_k(t), v_n, h_n(t_1), \ldots h_n(t_k))$.

Suppose that $\varphi \sststile{T \amalg T_0}{V} t\!\downarrow$.
Then we can prove that $\varphi \sststile{T \amalg T_0}{V} g_n(t)\!\downarrow \land h_n(t)\!\downarrow$.
To do this, we define a well-founded relation on the set of terms.
We will say that $t[t_1/x_1, ... t_n/x_n] < \sigma_m(t_1, ... t_n)$ if $t_1$, ... $t_n$ are arbitrary terms and $\sigma_m$ is greater than every function symbol that occurs in $t$.
It can be shown that this relation is well-founded.
Note that every term is greater than its subterms.
If $\Gamma, \Delta \vdash t : C$, $t = x$, or $t = v_i$, then $t$ is always defined.
Let us consider the case $t = \sigma_m(t_1, \ldots t_k)$
If $t = \subst_{p,m,k}(t, t_1, \ldots t_k)$, then a similar argument applies, so we omit this case.
We also prove this only for $g_n$, the proof for $h_n$ is similar.

Since $T \amalg T_0$ has well-defined function symbols, there exist terms $A_1, \ldots A_k$ such that all function symbols that occur in $A_i$ are less than $\sigma_m$ and the following condition is satisfied:
\[ \bigwedge_{1 \leq i \leq k} e_{p_i}(x_i) = A_i \ssststile{T}{x_1, \ldots x_k} \sigma_m(x_1, \ldots x_k)\!\downarrow \]
For every such $A_i$ and for all terms $t_1$, \ldots $t_{i-1}$, terms $g_n(A_i[t_1/x_1, \ldots t_{i-1}/x_{i-1}])$ and $A_i[g_n(t_1)/x_1, \ldots g_n(t_{i-1})/x_{i-1}]$ are equivalent.
This is true because $A_i$ cannot have nontrivial terms of type $C$.
Thus, to prove that $\sigma_m(g_n(t_1), \ldots g_n(t_k))$ is defined, it is enough to prove that terms $g_n(A_i[t_1/x_1, \ldots t_{i-1}/x_{i-1}])$ and $g_n(e_{p_i}(t_i))$ are equivalent.
The first term is defined by induction hypothesis.
Since $g_n(e_{p_i}(t_i)) = e_{p_i}(g_n(t_i))$, the second term is also defined by induction hypothesis.
Moreover, terms $A_i[t_1/x_1, \ldots t_{i-1}/x_{i-1}]$ and $e_{p_i}(t_i)$ are equivalent.
Since the theory is confluent, this implies that they reduce to the same term.
\Rlem{context} implies that this is also true for the reduction system $\Rightarrow_{T \amalg T_0, \varphi}^c$

Thus it is enough to prove that $g_n(t) \Rightarrow_{T \amalg T_0, \varphi}^{c*} g_n(s)$ whenever $t \Rightarrow_{T \amalg T_0, \varphi}^c s$.
Moreover, it is enough to prove this only when $t \Rightarrow_{T \amalg T_0, \varphi}^0 s$ since if $t \Rightarrow_{T \amalg T_0, \varphi}^c s$ and the redex is inside $t$,
then this redex is either in a subterm of $t$ of type $C$ or not.
In the former case, $g_n(t) = g_n(s)$.
In the latter case, it is easy to see that $g_n(t) \Rightarrow_{T \amalg T_0, \varphi}^{c*} g_n(s)$.
Assume that $t \Rightarrow_{T \amalg T_0, \varphi}^0 s$.
If the reduction is coming from $T$, then $g_n(t) \Rightarrow_{T \amalg T_0, \varphi}^c g_n(s)$ by the last assumption of \rprop{contr-main}.
The only problem is reduction rules of the form $e_p(x) \Rightarrow_\varphi A$ since terms $A$ may contain subterms of type $C$.
Let us assume that the only subterms of $\varphi$ of type $C$ are $c_0$.

Let $\Rightarrow_{T \amalg T_2, f(\varphi)}^{cp}$ be the subset of $\Rightarrow_{T \amalg T_2, f(\varphi)}^{c*}$ which consists of the same reductions as $\Rightarrow_{T \amalg T_2, f(\varphi)}^c$
except for the reduction $t \Rightarrow_{T \amalg T_2, f(\varphi)} \unit$.
Instead, we let $t \Rightarrow_{T \amalg T_2, f(\varphi)}^{cp} s$ whenever all subterms of $t$ of type $\top$ are replaced with $\unit$ in $s$.
In this case the condition of \eqref{it:contr-first} holds since we can take $A'' = g_n(t)$, $a_2 = \coe^l_0(h_n(t), v_0)$, and $a_1 = \coe^l_2(h_n(t), \rightI, v_0, \leftI)$.
If $t \Rightarrow_{T \amalg T_2, f(\varphi)}^{cp} s$ is an ordinary reduction from $\Rightarrow_{T \amalg T_2, f(\varphi)}^c$, then the condition of \eqref{it:contr-first} holds strictly,
that is there is a term $A''$ such that $t \Rightarrow_{T \amalg T_0, \varphi} A''$.

We proved that $T \amalg T_0 \to T \amalg T_2$ has the lifting property with respect to $(\varphi,V) \in P_M$ if the only subterms of $\varphi$ of type $C$ are $c_0$.
Let us show that this map has the lifting property with respect to all $(\varphi,V) \in P_M$.
Let $\varphi = \bigwedge_{1 \leq i \leq n} e_{p_i}(x_i) = A_i$.
Let $j$ be a number such that, for all $i < j$, the only subterms of $A_i$ of type $C$ are $c_0$.
We prove by induction on $n + 1 - j$ that $T \amalg T_0 \to T \amalg T_2$ has the lifting property.
If $j = n + 1$, then this is true by the proof above.
Suppose that $j \leq n$.
Let $B$ and $b$ be terms such that $\varphi \sststile{T \amalg T_0}{V} B\!\downarrow$ and $f(\varphi) \sststile{T \amalg T_2}{V} e_p(b) = f(B)$.
Let us denote by $\varphi'$ the following formula:
\[ \bigwedge_{1 \leq i < j} e_{p_i}(x_i) = A_i \land \bigwedge_{j \leq i \leq n} e_{p_i}(x_i') = A_i', \]
where $A_j' = g_0(A_j)$ and $A_i' = A_i[\coe^l_0(h_0(A_j),x_j')/x_j]$ for $i > j$.

By induction hypothesis, $T \amalg T_0 \to T \amalg T_2$ has the lifting property with respect to $(\varphi',V')$, where $V' = \{ x_1, \ldots x_{j-1}, x_j', \ldots x_n' \}$.
Let $B' = B[\coe^l_0(h_0(A_j),x_j')/x_j]$.
Since $h_0(A_j)$ is a constant homotopy in $T \amalg T_2$, the sequent $f(\varphi') \sststile{}{V'} f(B') = f(B)$ is derivable in this theory.
It follows that there is a term $b'$ such that $\varphi' \sststile{T \amalg T_0}{V'} e_p(b') = B'$ and $f(\varphi') \sststile{T \amalg T_2}{V'} f(b') = b$.

Now, we define a sequence of terms $a_j$, \ldots $a_n$ such that the following sequent is derivable in $T \amalg T_0$ for all $j \leq k \leq n$:
\[ \bigwedge_{1 \leq i \leq k} e_{p_i}(x_i) = A_i \sststile{T \amalg T_0}{x_1, \ldots x_k} e_{p_k}(a_k) = A_i'[a_j/x_j', \ldots a_{k-1}/x_{k-1}']. \]
Let $a_j = \coe^l_2(h_0(A_j), \rightI, x_j, \leftI)$.
There is an obvious homotopy between $x_j$ and $\coe^l_0(h_0(A_j),a_j)$.
Let us denote it by $H(i)$.
For every $j < k \leq n$, we define a term $a_k'(i)$ as follows:
\[ \coe_1(i. A_k[H(i)/x_j, a_{j+1}'(i)/x_{j+1}, \ldots a_{k-1}'(i)/x_{k-1}], x_k). \]
We define $a_k$ as $a_k'(\rightI)$ for all $j < k \leq n$.

Now, we define a term $b''$ as $b'[a_j/x_j', \ldots a_n/x_n']$.
Then $\varphi \sststile{T \amalg T_0}{V} e_p(b'') = B'[a_j/x_j', \ldots a_n/x_n']$.
Moreover, $f(\varphi) \sststile{T \amalg T_2}{V} f(b'') = b$.
Note that
\[ H' = B[H(i)/x_j, a_{j+1}'(i)/x_{j+1}, \ldots a_n'(i)/x_n] \]
is a homotopy between $B$ and $B'[a_j/x_j', \ldots a_n/x_n']$ which is constant in $T \amalg T_2$.
Thus $\coe^l_2(H',\rightI,b'',\leftI)$ is the required lifting of $b$.
\end{proof}

\section{Conclusion}
\label{sec:conclusion}

We defined several notions of equivalence between type theories: strict Morita equivalence, Morita equivalence, and syntactic equivalence.
It seems that the notion of strict Morita equivalence is rather useless due to the fact that there are no natural nontrivial examples of such an equivalence.
The problem stems from the fact that, to prove that a map is a strict Morita equivalence, we need to work with arbitrary equations between terms, which prevents us from using techniques associated with confluent theories.
Morita equivalence is the main notion of equivalence between theories and syntactic equivalence can be seen as a first approximation to this notion.
It might be easier to check that a map is a syntactic equivalence before tackling the more difficult problem of proving that a map is a Morita equivalence.

The notion of a confluent type theory that we also defined in this paper is very useful when working with theories syntactically.
The reason is that it is very easy to check when a term of a confluent is defined and when two terms are equivalent.
The latter is true if the terms are defined and reduce to the same term.
To check that a term $\sigma(t_1, \ldots t_k)$ of a confluent theory is defined, it is enough to check recursively that subterms $t_1$, \ldots $t_k$ are defined and that equations that define $\sigma$ are satisfied.

We also defined a model structure on categories of type theories with the interval type with Morita equivalences as weak equivalences.
This structure can be used in several ways to prove that a certain map is a Morita equivalence.
First of all, the existence of a model structure implies that Morita equivalences between type theories with the interval type satisfy the 2-out-of-3 property.
We do not know whether this is true for all type theories.
Another useful consequence is that a Morita equivalence which is a cofibrations is always a homotopy equivalence since all objects are fibrant in this model structure.

There is a very explicit description of generating trivial cofibrations in the category of type theories.
This means that a map between arbitrary type theories can always be factored into a cofibration followed by a trivial fibration using the small object argument even if we don't have the model structure
(in the absence of the module structure, the trivial cofibration/fibration factorization is not that useful since we cannot even show that trivial cofibrations are Morita equivalences).
and we believe it is often possible to construct such a factorization explicitly.
In general, a Morita equivalence which is a cofibration may not be a homotopy equivalence, but we believe it is often the case.
For example, this is always true for theories with the interval type since then we have a model structure in which all objects are fibrant.
This leaves us with the question of how to prove that a map is a trivial fibration.
Section~\ref{sec:triv-fib} is devoted to this question.

To prove that a map $f : T_1 \to T_2$ is a trivial fibration, it is necessary to show that, for every pair of terms of $T_1$ which are equivalent in $T_2$, there is a homotopy between them which is constant in $T_2$.
This is usually the most difficult part of a proof that a map is a trivial fibration.
If $T_2$ is confluent, then it is enough to prove conditions \eqref{it:contr-first} and \eqref{it:contr-second} that appear in subsection~\ref{sec:contr}.

There is a problem with this approach.
To describe it, let us assume that $T_1$ and $T_2$ have the same function symbols.
Suppose that we have a reduction $t[a/x] \Rightarrow_{T_2,\varphi} t[b/x]$, where $a$ reduces to $b$ in $T_2$, but not in $T_1$.
To prove \eqref{it:contr-first}, we need to find a term $s$ of $T_1$ such that $f(s) = t[b/x]$.
Since $T_1$ and $T_2$ have the same function symbols, this means that $s$ must be equal to $t[b/x]$.
The problem is that this term may not be defined in $T_1$.
For example, if $t = \sigma(x,a,\refl(a))$, where $\sigma(x,y,p)$ is defined if $p : \Id(A,x,y)$, then $t[a/x]$ is defined in $T_1$, but $t[b/x]$ is not.

We solved this problem in subsection~\ref{sec:contr} by replacing all occurrences of $a$ in the term that we are reducing.
This solution also has a problem.
To check that term $t[b/x]$ is defined, we need to verify that various terms are equivalent, that is reduce to the same term.
The problem is that these terms may contain term $a$ as a subterm \emph{after the reduction}.
This problem does not occur in the case of the unit type since we are replacing all subterms of a certain type
and it is true that if $t \Rightarrow_{T_1,\varphi} s$ and $t$ does not contain subterms of some type, then this is also true for $s$.
This argument does not work if $a \Rightarrow_{T_2,\varphi} b$ is a usual reduction rule such as $\app(A,B,\lambda(A',b),a) \Rightarrow_{T_2,\varphi} b[a]$.

The last problem does not occur if $T_1$ does not have reduction rules apart from typing axioms and the rules for $\subst$.
So we could try to use this argument to give an explicit construction of cofibrant replacement for some theories.
We believe that it is often possible to replace reduction rules of a theory with propositional equalities to get a cofibrant replacement.
The problem is that we cannot assume that theory has the interval type since the theory of the interval type has nontrivial reduction rules.
It is difficult to show that various constructions of a theory with the interval type preserve homotopies, but we believe that it can be done.

Finally, let us describe another idea that can be used to prove that a map $f : T_1 \to T_2$ (between theories with the interval type) is a trivial fibration.
We can modify conditions \eqref{it:contr-first} and \eqref{it:contr-second} slightly so that we do not have to assume that $T_1$ does not have any reduction rules.
Consider a reduction rule of the form $\sigma(a, t_2, \ldots t_k) \Rightarrow_{T_2,\varphi} \sigma(b, t_2, \ldots t_k)$, where $\sigma(a, t_1, \ldots t_k)$ is a term defined in $T_1$.
Then the following term is also defined in $T_1$:
\[ t' = \sigma(b, \coe_0(i.\,A_2[h],t_1), \ldots \coe_0(i.\,A_k[h],t_k)), \]
where $A_1$, \ldots $A_k$ are terms such that $\bigwedge_{1 \leq i \leq k} e_{p_i}(x_i) = A_i \ssststile{T_1}{x_1, \ldots x_k} \sigma(x_1, \ldots x_k)\!\downarrow$ and $i : I \vdash h : A_1$ is a homotopy between $a$ and $b$.
Similar construction can be used in the more general case of the reduction $t[a/x] \Rightarrow_{T_2,\varphi} t[b/x]$.

Terms $t'$ and $t[b/x]$ are equivalent in $T_2$, but they are not equal.
Thus we have to modify condition \eqref{it:contr-first} so that we can construct a term in $T_1$ which is not necessarily equal to $t[b/x]$ in $T_2$, but still is sufficiently close to it.
More specifically, $t'$ may contain additional applications of $\coe_0(i.\,A,-)$ inserted in $t[b/x]$ such that $A$ is a trivial homotopy in $T_2$.
To apply this method, we need to solve two problems.
The first one is that the term in $T_1$ does not match a term in $T_2$ strictly, so it might be more difficult to construct a homotopy in $T_1$ corresponding to a reduction rule of $T_2$.
The second problem is that we need to modify \eqref{it:contr-second} accordingly and it seems that this problem is more serious.

Let us say in conclusion that even though we developed the basic theory of Morita equivalences between type theories it seems that there is much more to be done as we discussed in this section.
Also, it seems that most of this theory is not related to type theories and can be generalized to arbitrary partial Horn theories.

\bibliographystyle{amsplain}
\bibliography{ref}

\end{document}